\DeclareSymbolFont{bbold}{U}{bbold}{m}{n}
\DeclareSymbolFontAlphabet{\mathbbold}{bbold}
\newtheorem{thm}{Theorem}
\newtheorem{prop}[thm]{Proposition}
\newtheorem{cor}[thm]{Corollary}
\theoremstyle{definition}
\theoremstyle{remark}
\newtheorem{rem}[thm]{Remark}
\newcommand\E[1]{}
\newcommand{\R}{\mathbb{R}}
\newcommand{\N}{\mathbb{N}}
\newcommand{\Z}{\mathbb{Z}}
\def\XXint#1#2#3{{\setbox0=\hbox{$#1{#2#3}{\int}$ }
\vcenter{\hbox{$#2#3$ }}\kern-.6\wd0}}
\title{Bounds for lacunary bilinear spherical and triangle maximal functions}
\author{Tainara Borges\thanks{Department of Mathematics, Brown University, Providence, RI 02912, tainara\_gobetti\_borges@brown.edu\\ ORCID: 0000-0002-3016-6138}\and Benjamin Foster\thanks{Department of Mathematics, Stanford University, Stanford, CA 94305, bfost@stanford.edu \\ ORCID: 0009-0003-8265-6790 }}
\begin{document}

\maketitle

\begin{abstract}
We prove $L^p(\R^d)\times L^q(\R^d)\rightarrow L^r(\R^d)$ bounds for certain lacunary bilinear maximal averaging operators with parameters satisfying the H\"older relation $1/p+1/q=1/r$. The boundedness region that we get contains at least the interior of the H\"older boundedness region of the associated single scale bilinear averaging operator. In the case of the lacunary bilinear spherical maximal function in $d\geq 2$, we prove boundedness for any $p,q\in (1,\infty]^2$, which is sharp up to boundary; we then show how to extend this result to a more degenerate family of surfaces where some curvatures are allowed to vanish. For the lacunary triangle averaging maximal operator, we have results in $d\geq 7$, and the description of the sharp region will depend on a sharp description of the H\"older bounds for the single scale triangle averaging operator, which is still open.
\end{abstract}

\vspace{1cm}

\noindent\textbf{Acknowledgements.} The first author would like to thank Professor Jill Pipher for her support and for helpful discussions throughout the process of writing this paper. We also would like to thank Professor Yumeng Ou for introducing us to some of the questions in this paper. \vspace{0.5cm} \\ \\
\textbf{Keywords.} Bilinear averaging operator, coarea formula, high-low decomposition, lacunary maximal function, spherical average

\newpage

\section{Introduction and main results}

Given a compactly supported finite Borel measure $\sigma$ in $\R^d$ an important class of maximal operators is given by 
$$M_{\sigma}(f)(x)=\sup_{t>0}\left|\int f(x-tz)d\sigma(z)\right|.$$
In the particular case of $\sigma=\sigma_{d-1}$, the normalized surface measure in the unit sphere $S^{d-1}$, one gets the spherical maximal function $M_{\sigma_{d-1}}$, which we will denote by $\mathcal{S}$. The study of the boundedness properties of $\mathcal{S}$ were initiated by Stein \cite{stein}. He showed that $\mathcal{S}$ is bounded in $L^{p}$ if $d\geq 3$ and $\frac{d}{d-1}< p\leq \infty$ and that boundedness fails for $d\geq 2$ and $p\leq \frac{d}{d-1}$. The remaining case $d=2$ was proven later by Bourgain \cite{bourgaind2}; this case was more complicated since the operator fails to be bounded in $L^2(\R^2)$, requiring more delicate Fourier-analytic tools.

More general maximal functions are obtained when the supremum is taken over dyadic dilates of a subset $E\subset [1,2]$. In the case of $S=S^{d-1}$ and $E=\{1\}$, one ends up with the so called lacunary spherical maximal function
$$\mathcal{S}_{lac}(f)(x)=\sup_{l\in \Z} \left|\int_{S^{d-1}}f(x-2^{-
l}z)d\sigma_{d-1} (z)\right|.$$
The operator $\mathcal{S}_{lac}$ has better boundedness properties than the full spherical maximal function $\mathcal{S}$; it was shown by C. Calderón \cite{calderon} and Coifman and Weiss \cite{coifmanweiss} that $\mathcal{S}_{lac}$ is bounded in $L^p$ for any $1<p\leq \infty$. Some generalizations of these facts for more general dilation sets $E$ can be found in \cite{SeegerWainger} and \cite{DuoanVargas}. One of the interesting consequences of the results in \cite{DuoanVargas} is that for a compactly supported finite Borel measure $\sigma$ in $\R^d$, as long as its Fourier transform $\hat{\sigma}(\xi)$ has some decay $|\hat{\sigma}(\xi)|\leq C|\xi|^{-a}$, with $a>0$ then $$M_{\sigma,lac}(f)(x)=\sup_{l\in\Z}\left|\int f(x-2^{-l}z)d\sigma(z)\right|$$
is bounded in $L^{p}$, for all $p>1$. Also, if $|\hat{\sigma}(\xi)|\leq C|\xi|^{-\frac{d-1}{2}}$ and $d\geq 3$ then $M_{\sigma}$ is bounded in $L^{p}$ for all $p>\frac{d-1}{d}$, recovering Stein's result.

We are interested in proving some bilinear analogues of these results in the lacunary setting. The bilinear averages we are interested in are given by bilinear convolutions with a compactly supported finite Borel measure $\mu$ in $\R^{2d}$. Namely, for $t>0$, and $f,g\in C^{\infty}_0(\R^d)$, define the bilinear average at scale $t>0$ as
\begin{equation}
    \mathcal{A}_{\mu,t}(f,g)(x)=\int_{S^{2d-1}}f(x-ty)g(x-tz)\,d\mu(y,z),\,x\in \R^d.
\end{equation}
The associated lacunary spherical maximal function is then given by
\begin{equation}
    \mathcal{M}_{\mu,lac}(f,g)(x)=\sup_{l \in \Z}|\mathcal{A}_{\mu,2^{-l}}(f,g)(x)|.
\end{equation}

In the particular case of $\mu=\sigma_{2d-1}$ we might omit the measure $\mu$ in the notation. By allowing any parameter $t>0$ in the supremum above one gets the (full) bilinear spherical maximal function
\begin{equation}
    \mathcal{M}(f,g)(x)=\sup_{t>0} |\mathcal{A}_{t}(f,g)(x)|.
\end{equation}

For $d\geq 2$, the study of the Lebesgue bounds for $\mathcal{M}$ 
has attracted a lot of attention in recent years. In particular, people have studied the boundedness region of the operator, that is, the description of all the parameters $1\leq p,q\leq \infty $ and $r>0$ such that $\mathcal{M}:L^p\times L^q\rightarrow L^r$ is bounded. For both $\mathcal{M}$ and $\mathcal{M}_{lac}$, it is necessary to impose the H\"older relation $p^{-1}+q^{-1}=r^{-1}$ because of scaling. The operator $\mathcal{M}$ first appeared in \cite{GGIS}, and subsequently, its Lebesgue bounds were  studied by multiple authors (\cite{BGHHO},\cite{GHH},\cite{HHY}) until the description of the sharp boundedness region for $\mathcal{M}$ in $d\geq 2$ was finally settled by \cite{JL} (see Theorem  \ref{boundsjl} below for the statement of these sharp bounds). In that paper, they exploited an interesting coarea formula which allowed them to slice the integral in $S^{2d-1}$ into integrals over lower dimensional spheres.

Even though the question of the boundedness region for $\mathcal{M}$ was settled in \cite{JL}, the sharp boundedness region for the lacunary case in $d\geq 2$ was still open and did not follow from the slicing strategy exploited in their paper. Indeed, when studying the boundedness properties of the bilinear spherical maximal function $\mathcal{M}$, Jeong and Lee \cite{JL} showed that if $F$ is a continuous function defined in $\R^{2d}$ and $(y,z)\in \R^d\times \R^d$, then the following slicing formula holds
$$\int_{S^{2d-1}}F(y,z)d\sigma (y,z)=\int_{B^{d}(0,1)}\int_{S^{d-1}} F(y,\sqrt{1-|y|^2}z)d\sigma_{d-1} (z) (1-|y|^2)^{\frac{d-2}{2}}\,dy.$$

In particular, for any $f,g\in C^{\infty}_{0}(\R^d)$,
\begin{equation}\label{slicing formula}
    \begin{split}
     \mathcal{A}_t(f,g)(x)=&\int_{B^{d}(0,1)}f(x-ty)\int_{S^{d-1}} g(x-t\sqrt{1-|y|^2}z)d\sigma_{d-1} (z) (1-|y|^2)^{\frac{d-2}{2}}\,dy \\
     =&\int_{B^{d}(0,1)}f(x-ty)A_{t(\sqrt{1-|y|^2})}(g)(x)(1-|y|^2)^{\frac{d-2}{2}}\,dy ,  
    \end{split}
\end{equation}
where $A_t f(x):=\int_{S^{d-1}}f(x-ty)d\sigma(y)$ is the linear spherical average at scale $t$, and they used that to show $$\mathcal{M}(f,g)(x)\lesssim Mf(x)\mathcal{S}g(x),$$
where $M$ is the Hardy-Littlewood maximal function. If we restrict ourselves to $t\in 2^{\mathbb{Z}}$ in the hope of controlling $\mathcal{M}_{lac}(f,g)$ by the product of $Mf(x)$ with the lacunary (sub)linear spherical maximal function $\mathcal{S}_{lac}$, we quickly run into trouble. Even when $t= 2^{-l}$ with $l\in \Z$, one can observe that in the slicing formula \eqref{slicing formula}, one needs to take into account spherical averages of $g$ in all scales $2^{-l}\sqrt{1-|y|^2}$ for any $|y|<1$. As a result their slicing strategy does not immediately imply a larger boundedness region for the lacunary bilinear maximal function than the one they prove for the full bilinear spherical maximal function, and we must instead turn to other methods.

Observe that the definition of $\mathcal{A}_t$ makes sense for any $d\geq 1$. In \cite{MCZZ} the authors studied $\mathcal{M}$ and $\mathcal{M}_{lac}$ in the case $d=1$ and they proved the sharp bounds up to the boundary. The sharp boundedness region for $\mathcal{M}$ in $d=1$ was also obtained independently by \cite{dosidisramos}. 

In this article, we almost entirely settle the question of Lebesgue bounds for $\mathcal{M}_{lac}$ in $d\geq 2$, providing the lacunary counterpart of the boundedness results for $\mathcal{M}$ in \cite{JL} and the bilinear analogue of the bounds for $\mathcal{S}_{lac}$ in \cite{calderon} and \cite{coifmanweiss}.

\begin{thm}[Bounds for the lacunary bilinear spherical maximal function]\label{boundsspherical} Assume $d\geq 2$. Let $p,q\in (1,\infty]$ and $r\in (0, \infty)$ satisfy the H\"older relation $\frac{1}{r}=\frac{1}{p}+\frac{1}{q}$. Then
    \begin{equation}
     \|\mathcal{M}_{lac}(f,g)\|_r= \|\sup_{l \in \Z}|\mathcal{A}_{2^{-l}}(f,g)|\|_r\lesssim  \|f\|_p\|g\|_q.
    \end{equation}
    Moreover, $\mathcal{M}_{lac}$ does not satisfy strong bounds $L^1\times L^{\infty}\rightarrow L^{1}$ or $L^{\infty}\times L^1\rightarrow L^1$, but it satisfies weak bounds $\mathcal{M}_{lac}:L^1\times L^{\infty}\rightarrow L^{1,\infty}$ and $\mathcal{M}_{lac}:L^{\infty}\times L^{1}\rightarrow L^{1,\infty}$.
\end{thm}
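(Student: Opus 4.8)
The plan is to reduce, via the coarea (slicing) identity, to the linear \emph{lacunary} spherical maximal function in the ``easy'' range $r\ge 1$, to handle the range $r<1$ by transferring single-scale bilinear spherical bounds through a Littlewood--Paley decomposition, and to treat the endpoint claims by a direct domination by the Hardy--Littlewood maximal function $M$ together with an explicit example.

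\textbf{Slicing and the range $1/p+1/q\le 1$.} Parametrizing $(y,z)\in S^{2d-1}\subset\R^d\times\R^d$ by $|y|=\rho$ and applying the coarea formula with the radial function $(y,z)\mapsto|y|$ gives
\[
\mathcal A_t(f,g)(x)=c_d\int_0^1 \Big(\int_{S^{d-1}} f(x-t\rho\omega)\,d\sigma_{d-1}(\omega)\Big)\Big(\int_{S^{d-1}} g(x-t\sqrt{1-\rho^2}\,\eta)\,d\sigma_{d-1}(\eta)\Big)\,w(\rho)\,d\rho ,
\]
with $w(\rho)=\rho^{d-1}(1-\rho^2)^{(d-2)/2}$ and $\int_0^1 w<\infty$ for all $d\ge1$. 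Taking $\sup$ over $t=2^{-l}$ inside the integral, dominating each factor by $\mathcal S_{lac}^{(a)}|f|:=\sup_l \int_{S^{d-1}}|f(\cdot-2^{-l}a\omega)|\,d\sigma_{d-1}(\omega)$, and using that $\|\mathcal S_{lac}^{(a)}h\|_p\lesssim_p\|h\|_p$ \emph{uniformly in} $a>0$ for $p>1$ (by dilation invariance together with Calder\'on and Coifman--Weiss) and $\|\mathcal S_{lac}^{(a)}h\|_\infty\le\|h\|_\infty$, one obtains from Minkowski's integral inequality in $L^r$ (legitimate since $r\ge1$) and H\"older
\[
\|\mathcal M_{lac}(f,g)\|_r\le c_d\int_0^1\big\|\mathcal S_{lac}^{(\rho)}|f|\big\|_p\big\|\mathcal S_{lac}^{(\sqrt{1-\rho^2})}|g|\big\|_q\,w(\rho)\,d\rho\lesssim\|f\|_p\|g\|_q
\]
for all $p,q\in(1,\infty]$ with $1/p+1/q\le1$; this already includes the edges $L^p\times L^\infty\to L^p$, $p>1$.

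\textbf{The range $r<1$.} Here Minkowski in $L^r$ is unavailable, and products of linear maximal functions provably do not suffice: decoupling the supremum and then taking $\sup_\rho$ produces the \emph{full} spherical maximal function (a focusing example $f=\mathbf 1_{B(0,\delta)}$ shows the ``gappy-lacunary'' operator $\sup_\rho\mathcal S_{lac}^{(\rho)}$ is unbounded on $L^q$ for $q\le d/(d-1)$). I would instead exploit the bilinear structure of $\mathcal A_t$. Split $\mathcal A_{2^{-l}}=\mathcal A_{2^{-l}}^{0}+\sum_{j\ge1}\mathcal A_{2^{-l}}^{j}$ by a Littlewood--Paley decomposition of the $\R^{2d}$-multiplier $\widehat{\sigma_{2d-1}}(2^{-l}\cdot)$ into annuli $\{|(\xi,\eta)|\sim 2^{j+l}\}$. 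The low piece has an $L^1$-normalized kernel supported in a ball of radius $\sim 2^{-l}$ in $\R^{2d}$, so $\sup_l|\mathcal A_{2^{-l}}^0(f,g)|\lesssim Mf\cdot Mg$, bounded $L^p\times L^q\to L^r$ for all $p,q>1$ by H\"older. Each high piece $\mathcal A_{2^{-l}}^{j}$ satisfies, uniformly in $l,j$: (i) the single-scale bilinear spherical bound $L^p\times L^q\to L^r$ (the annular cutoff being harmless), and (ii) an $L^2$-based bound with gain $2^{-\delta_0 j}$ coming from $|\widehat{\sigma_{2d-1}}(\zeta)|\lesssim|\zeta|^{-(2d-1)/2}$; interpolating yields $\|\mathcal A_{2^{-l}}^j(f,g)\|_r\lesssim 2^{-\eps j}\|f\|_p\|g\|_q$ with $\eps=\eps(p,q)>0$ whenever $(1/p,1/q)$ is interior to the single-scale H\"older region --- which for $d\ge2$ contains the full open square $(1,\infty)^2$. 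For fixed $j$ the supremum over $l$ is handled by near-orthogonality: after Littlewood--Paley decomposing $f=\sum_k f_k$, $g=\sum_{k'} g_{k'}$, the piece $\mathcal A_{2^{-l}}^j(f_k,g_{k'})$ is non-zero for only $O(1)$ values of $l$, so one may sum in $l,k,k'$ using square-function and Littlewood--Paley estimates and then in $j$. I expect this last bookkeeping --- keeping the $2^{-\eps j}$ gain alive through both the supremum over scales and the double frequency sum --- to be the main obstacle, since it is exactly where one must pass from slicing to genuine bilinear almost-orthogonality.

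\textbf{Endpoints.} From the slicing identity and $(1-|v|^2)^{(d-2)/2}\le1$ for $d\ge2$ one has $\int_{S^{2d-1}}|f(x-ty)|\,d\sigma_{2d-1}(y,z)=c_d\int_{|v|<1}|f(x-tv)|(1-|v|^2)^{(d-2)/2}\,dv\lesssim Mf(x)$ uniformly in $t$, hence $\mathcal M_{lac}(f,g)\lesssim\|g\|_\infty\,Mf$ and, symmetrically, $\lesssim\|f\|_\infty\,Mg$; the weak bounds $L^1\times L^\infty\to L^{1,\infty}$ and $L^\infty\times L^1\to L^{1,\infty}$ then follow from the weak $(1,1)$ inequality for $M$. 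For the failure of the strong bound $L^1\times L^\infty\to L^1$, take $g$ a fixed smooth bump equal to $1$ on a large ball and $f$ a smooth approximation of $\mathbf 1_{B(0,\eps)}$: choosing for each $x$ with $\eps\lesssim|x|\lesssim1$ a dyadic scale $2^{-l}\sim|x|$ makes $\mathcal A_{2^{-l}}(f,g)(x)\gtrsim(\eps/|x|)^d$, so $\|\mathcal M_{lac}(f,g)\|_1\gtrsim\eps^d\int_{\eps<|x|<1}|x|^{-d}\,dx\sim\eps^d\log(1/\eps)$, which is not $O(\eps^d)=O(\|f\|_1\|g\|_\infty)$; the failure of $L^\infty\times L^1\to L^1$ is symmetric.
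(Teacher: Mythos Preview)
Your slicing argument for the Banach range $r\ge1$ is correct and is in fact more elementary than what the paper does there: fixing $\rho$ first, the dilated lacunary operators $\mathcal S_{lac}^{(a)}$ have $L^p\to L^p$ norms independent of $a>0$ by scaling, and Minkowski (legitimate since $r\ge1$) plus H\"older finishes. The paper's remark that slicing ``runs into trouble'' refers to the impossibility of a \emph{pointwise} bound of the form $\mathcal M_{lac}(f,g)\lesssim Mf\cdot\mathcal S_{lac}g$ --- not to your integrated version --- and the paper simply runs its frequency-decomposition machinery uniformly for all $r$ rather than splitting off the Banach case. Your endpoint arguments are also correct and essentially equivalent to the paper's.

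The genuine gap is in the range $r<1$, precisely at the ``bookkeeping'' you flag as the main obstacle. Your joint annular decomposition $|(\xi,\eta)|\sim 2^{j+l}$ is a less convenient choice than the paper's: after decomposing $f=\sum_k f_k$ and $g=\sum_{k'}g_{k'}$, a fixed pair $(j,l)$ admits infinitely many contributing $(k,k')$ (all those with $\max(k,k')\approx j+l$), and the square-function step does not organize cleanly. The paper instead localizes \emph{separately} to $|\xi|\sim 2^{i+l}$ and $|\eta|\sim 2^{j+l}$, so that $\mathcal A_{2^{-l}}^{i,j}(f,g)=\mathcal A_{2^{-l}}(f*\psi_{i+l},g*\psi_{j+l})$; for fixed $(i,j)$ the families $\{f*\psi_{i+l}\}_l$ and $\{g*\psi_{j+l}\}_l$ are then honest Littlewood--Paley pieces of $f$ and $g$ respectively. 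The passage from the single-scale decay $N(i,j;p,q,r)\lesssim 2^{-\delta(i+j)}$ (which the paper obtains, incidentally, by applying your slicing identity at a \emph{fixed} scale) to the lacunary maximal bound is then a bootstrap due to Heo--Hong--Yang: setting $A_{\mathcal N}:=\|\sup_{|l|\le\mathcal N}|\mathcal A_{2^{-l}}|\|_{L^p\times L^q\to L^r}$ (a priori finite), one interpolates a vector-valued $L^r(\ell^\infty)$ estimate (controlled by $A_{\mathcal N}$) against an $L^r(\ell^r)$ estimate (controlled by $N(i,j)$) to obtain, after Littlewood--Paley, $\|\sup_l|\mathcal A_{2^{-l}}^{i,j}(f,g)|\|_r\lesssim A_{\mathcal N}^{1/2}N(i,j)^{1/2}\|f\|_p\|g\|_q$. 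Summing over $(i,j)$ gives $A_{\mathcal N}^{\min(r,1)}\lesssim 1+A_{\mathcal N}^{\min(r,1)/2}$, hence $A_{\mathcal N}\lesssim 1$ uniformly in $\mathcal N$. This self-improving step --- not merely ``near-orthogonality'' --- is the missing ingredient in your sketch, and it is precisely what the two-parameter decomposition is designed to feed.
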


    \begin{rem}
    Bounds along certain segments of the lines $p=1$ or $q=1$ follow from the observation that $\mathcal{M}_{lac}(f,g)(x)\leq \mathcal{M}(f,g)(x) $  and the known bounds from \cite{JL} (see Figure \ref{figure1}). For example, for $p=1$, let $q\in (\frac{d}{d-1}, \infty)$ and $1/r=1/p+1/q$, then
        $$\|\mathcal{M}_{lac}(f,g)\|_{r}\lesssim \|f\|_1 \|g\|_{q}.$$
        We conjecture that bound above holds at least for $q\in (1,\infty)$, but the methods in this paper did not extend to that range.
    \end{rem} 

    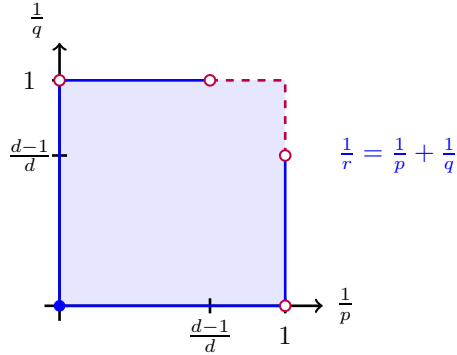
\begin{figure}[h]\label{figure1}
\begin{center}
         \scalebox{1}{
\begin{tikzpicture}
\fill[blue!10!white] (0,0)--(3,0)--(3,3)--(0,3)--(0,0);

\draw (-0.3,3.8) node {$\frac{1}{q}$};
\draw (3.8,0) node {$\frac{1}{p}$};
\draw (3,-0.4) node {$1$};
\draw (-0.4,3) node {$1$};
\draw (-0.4,2) node{$\frac{d-1}{d}$};
\draw (2,-0.4) node{$\frac{d-1}{d}$};

\draw[->,line width=1pt] (-0.2,0)--(3.5,0);
\draw[->,line width=1pt] (0,-0.2)--(0,3.5);

\draw[-,line width=1pt] (3,-0.1)--(3,0.1);
\draw[-,line width=1pt] (-0.1,3)--(0.1,3);
\draw[-,line width=1pt] (-0.1,2)--(0.1,2);
\draw[-,line width=1pt] (2,-0.1)--(2,0.1);

\draw[line width=1pt,blue,line width=1pt] (0,0)--(3,0)--(3,2);
\draw[line width=1pt,blue,line width=1pt] (0,0)--(0,3)--(2,3);
\draw[dashed,purple,line width=1pt] (2,3)--(3,3)--(3,2);

\draw[blue] (4.5,2) node {$\frac{1}{r}=\frac{1}{p}+\frac{1}{q}$};

\fill[white] (3,0) circle (2pt);
\fill[white] (0,3) circle (2pt);
\fill[white] (3,2) circle (2pt);
\fill[white] (2,3) circle (2pt);
\filldraw[blue] (0,0) circle (2pt);
\draw[purple,line width=0.75pt] (3,0) circle (2pt);
\draw[purple,line width=0.75pt] (0,3) circle (2pt);
\draw[purple,line width=0.75pt] (3,2) circle (2pt);
\draw[purple,line width=0.75pt] (2,3) circle (2pt);

\end{tikzpicture}}
\end{center}
\caption{H\"older bounds for the lacunary bilinear spherical maximal function in $d\geq 2$.}
\end{figure}

\begin{rem}
    $\mathcal{S}_{lac}$ fails to be bounded in $L^1(\R^d)$, and there are some interesting results about the endpoint behavior of $\mathcal{S}_{lac}$ in the literature (see \cite{cladek2017} and references therein). One open question is what kind of bounds can be proved for $\mathcal{M}_{lac}$ in the missing pieces of the boundary and if any of the linear methods in these papers can be adapted to our bilinear setting.
\end{rem}

The strategy and methods used in the proof of Theorem \ref{boundsspherical} actually generalize to take care of a more general class of operators including the bilinear averaging operators which appeared in the work of Lee and Shuin \cite{leeshuin}. We refer to Section \ref{leeshuinsection} for the definition of the region $\mathcal{R}^{\mathbf{a}}$ and the bilinear averaging operators $\mathcal{A}_{t}^{\mathbf{a}}(f,g)$, for $\mathbf{a}\in [1,\infty)^2$.

\begin{thm}[Bounds for the lacunary maximal operators associated to degenerate surfaces]\label{lacunaryleeshuin}
   Assume $d\geq 2$ and let $\mathbf{a}=(\mathbf{a_1},\mathbf{a_2})\in [1,d]^2$.  Then, for any $(\frac{1}{p},\frac{1}{q})\in \text{int}{(\mathcal{R}^{\mathbf{a}})}$, and $r\in (0, \infty)$ given by the H\"older relation $\frac{1}{r}=\frac{1}{p}+\frac{1}{q}$, it holds that
    \begin{equation}
\|\sup_{l\in\Z}|\mathcal{A}_{2^{-l}}^{\mathbf{a}}(f,g)|\|_{r}\lesssim  \|f\|_p\|g\|_q.
    \end{equation}
\end{thm}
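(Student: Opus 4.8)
The plan is to repeat, almost verbatim, the argument behind Theorem~\ref{boundsspherical}, feeding in the degenerate--surface data of \cite{leeshuin} in place of the sphere data. Let $m^{\mathbf a}=\widehat{d\mu^{\mathbf a}}$ be the bilinear multiplier of $\mathcal A_1^{\mathbf a}$, take a Littlewood--Paley decomposition $m^{\mathbf a}=\sum_{j\ge 0}m_j^{\mathbf a}$ in the frequency variable $(\xi,\eta)\in\R^{2d}$ with $m_j^{\mathbf a}$ supported in $\{|(\xi,\eta)|\sim 2^j\}$ for $j\ge 1$, and write $\mathcal A_t^{\mathbf a,j}$ for the averaging operator with multiplier $m_j^{\mathbf a}(t\xi,t\eta)$. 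Since
\[
\sup_{l\in\Z}\big|\mathcal A_{2^{-l}}^{\mathbf a}(f,g)\big|\le\sum_{j\ge 0}\sup_{l\in\Z}\big|\mathcal A_{2^{-l}}^{\mathbf a,j}(f,g)\big|,
\]
it suffices to prove, for each $(1/p,1/q)\in\text{int}(\mathcal R^{\mathbf a})$ and the associated $r$, a bound $\|\sup_{l}|\mathcal A_{2^{-l}}^{\mathbf a,j}(f,g)|\|_r\lesssim 2^{-\epsilon j}\|f\|_p\|g\|_q$ with $\epsilon=\epsilon(p,q,\mathbf a)>0$, and then sum the geometric series in $j$.

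The term $j=0$ is harmless: the kernel of $\mathcal A_1^{\mathbf a,0}$ is a fixed Schwartz function on $\R^{2d}$ (the low-frequency part of $d\mu^{\mathbf a}$), whence $\sup_l|\mathcal A_{2^{-l}}^{\mathbf a,0}(f,g)(x)|\lesssim Mf(x)\,Mg(x)$, and the claim follows from the $L^p$, $L^q$ boundedness of the Hardy--Littlewood maximal function together with H\"older's inequality, valid for all $p,q\in(1,\infty]$ and hence on $\text{int}(\mathcal R^{\mathbf a})$. For $j\ge 1$ I would interpolate between a gain estimate and a crude estimate. The \emph{gain estimate} is at a fixed base point: \cite{leeshuin}'s single-scale analysis, which is driven by the decay of $\widehat{d\mu^{\mathbf a}}$, supplies $\|m_j^{\mathbf a}\|_\infty\lesssim 2^{-\gamma j}$ with matching symbol bounds on its derivatives, and from it one extracts a dyadic single-scale inequality $\|\mathcal A_1^{\mathbf a,j}(f,g)\|_{r_0}\lesssim 2^{-\gamma' j}\|f\|_{p_0}\|g\|_{q_0}$ at some base point $(1/p_0,1/q_0)\in\text{int}(\mathcal R^{\mathbf a})$ with $\gamma'>0$; by dilation invariance it holds for each $\mathcal A_{2^{-l}}^{\mathbf a,j}$ as well. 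To pass to the lacunary supremum one uses $\sup_l|\,\cdot\,|\le(\sum_l|\,\cdot\,|^2)^{1/2}$ and exploits the frequency localization of $m_j^{\mathbf a}$: writing $f=\sum_a P_af$ and $g=\sum_b P_bg$, the term $\mathcal A_{2^{-l}}^{\mathbf a,j}(P_af,P_bg)$ vanishes unless $\max(a,b)=j+l+O(1)$, so after splitting into the ``high--low'', ``low--high'' and ``high--high'' frequency interactions, in each surviving piece one of the inputs is a single Littlewood--Paley block at scale $2^{j+l}$; the $\ell^2$-orthogonality (over $l$, for fixed $j$) of these blocks, combined with the dyadic single-scale inequality and the vector-valued Littlewood--Paley inequality, yields $\|\sup_l|\mathcal A_{2^{-l}}^{\mathbf a,j}(f,g)|\|_{r_0}\lesssim 2^{-\epsilon_0 j}\|f\|_{p_0}\|g\|_{q_0}$. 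The \emph{crude estimate} uses only the trivial kernel bound $\|K_j^{\mathbf a}\|_\infty\lesssim 2^{Nj}$, where $K_j^{\mathbf a}$ is the kernel of $\mathcal A_1^{\mathbf a,j}$: it gives $|\mathcal A_{2^{-l}}^{\mathbf a,j}(f,g)(x)|\lesssim 2^{Nj}Mf(x)Mg(x)$, hence $\|\sup_l|\mathcal A_{2^{-l}}^{\mathbf a,j}(f,g)|\|_r\lesssim 2^{Nj}\|f\|_p\|g\|_q$ for all $p,q\in(1,\infty]$. Bilinear interpolation between the gain estimate at $(1/p_0,1/q_0)$ and the crude estimate at points of $(1,\infty]^2$ arbitrarily close to the vertices of $\mathcal R^{\mathbf a}$ then gives the desired decay $2^{-\epsilon j}$ on all of $\text{int}(\mathcal R^{\mathbf a})$, completing the sum over $j$.

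The step I expect to be most delicate is the $\ell^2$-orthogonality for $j\ge 1$. For the linear lacunary spherical maximal function the operator output is itself frequency-localized at scale $2^{j+l}$, so one can insert a Littlewood--Paley projection after the operator and reduce directly to a square function of pairwise disjoint pieces; in the bilinear setting $\mathcal A_{2^{-l}}^{\mathbf a,j}(f,g)$ has no such output localization, and orthogonality has to be recovered by decomposing \emph{both} inputs and bookkeeping the frequency interactions so that each surviving term carries the scale $2^{j+l}$ in one of its inputs. This is also why the theorem is stated only on the interior of $\mathcal R^{\mathbf a}$: the argument needs a base point with a genuine gain $2^{-\gamma' j}$ in the single-scale bound, which is available at interior points of \cite{leeshuin}'s region, and interpolation with the crude bounds then covers the rest of the interior. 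Beyond swapping $d\sigma_{2d-1}$, the full H\"older region, and the decay $|\widehat{d\sigma_{2d-1}}(\zeta)|\lesssim|\zeta|^{-(2d-1)/2}$ for $d\mu^{\mathbf a}$, the region $\mathcal R^{\mathbf a}$, and the weaker decay of $\widehat{d\mu^{\mathbf a}}$ from \cite{leeshuin}, the proof is the same as that of Theorem~\ref{boundsspherical}.
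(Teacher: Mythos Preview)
Your proposal takes a genuinely different route from the paper, and the main divergence is the source of the single-scale decay. You appeal to the Fourier decay of $\widehat{d\mu^{\mathbf a}}$ (via \cite{leeshuin}) to get $\|m_j^{\mathbf a}\|_\infty\lesssim 2^{-\gamma j}$; the paper instead avoids any direct Fourier analysis of the degenerate surface measure. Its key ingredient is a \emph{slicing formula}: writing the integral over $\mathcal S^{\mathbf a}$ via the coarea formula and then passing to polar coordinates in the unit ball, one obtains
\[
\mathcal B_1^{\mathbf a}(f,g)(x)=\frac{1}{\mathbf a_2}\int_0^1 A_t f(x)\,A_{(1-t^{\mathbf a_1})^{1/\mathbf a_2}}g(x)\,t^{d-1}(1-t^{\mathbf a_1})^{\frac{d-\mathbf a_2}{\mathbf a_2}}\,dt,
\]
where $A_t$ is the \emph{linear} spherical average on $\R^d$. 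Feeding in the frequency-localized inputs $f^i=f*\psi_i$, $g^j=g*\psi_j$ and using only the classical decay $\|A_t(f^i)\|_2\lesssim (t2^i)^{-(d-1)/2}\|f\|_2$ then gives $\|(\mathcal B_1^{\mathbf a})^{i,j}(f,g)\|_1\lesssim 2^{-(i+j)(d-1)/2}\|f\|_2\|g\|_2$, uniformly in $\mathbf a$. This is the whole point of the generalization: for the degenerate surfaces (where curvatures vanish when $\mathbf a_i\neq 2$), the decay of $\widehat{d\mu^{\mathbf a}}$ may be substantially weaker than $(2d-1)/2$, but the slicing reduces everything to the $(d-1)$-sphere in $\R^d$, whose Fourier decay is independent of $\mathbf a$. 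The paper also uses \emph{separate} frequency localizations in $\xi$ and $\eta$ (indices $i,j$) rather than your joint localization in $(\xi,\eta)$; this matches the slicing structure and feeds directly into the Littlewood--Paley/bootstrap machinery of Section~\ref{prooffirstthm} without the secondary decomposition you describe.

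There is also a gap in your crude estimate. From $\|K_j^{\mathbf a}\|_\infty\lesssim 2^{Nj}$ alone you cannot conclude $|\mathcal A_{2^{-l}}^{\mathbf a,j}(f,g)(x)|\lesssim 2^{Nj}Mf(x)Mg(x)$: pointwise domination by a product of Hardy--Littlewood maximal functions requires an integrable radially-decreasing majorant for the kernel in each variable, not merely an $L^\infty$ bound. One can repair this (since $K_j^{\mathbf a}=d\mu^{\mathbf a}*\check\psi_j$ is Schwartz with constants growing polynomially in $j$), but the paper sidesteps the issue entirely: its ``no-decay'' estimate at every H\"older point of $\mathcal R^{\mathbf a}$ comes with \emph{no} polynomial loss, simply by writing $(\mathcal B_1^{\mathbf a})^{i,j}(f,g)=\mathcal B_1^{\mathbf a}(f*\psi_i,g*\psi_j)$ and invoking boundedness of $\mathcal B_1^{\mathbf a}$, and the passage to the lacunary supremum is then handled by the vector-valued bootstrap of \cite{HHY} (interpolating $L^r(\ell^\infty)$ against $L^r(\ell^r)$) rather than by interpolating a gain against a lossy crude bound.
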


Another interesting bilinear averaging operator that has been studied in the last few years 
(\cite{triangleaveraging}, \cite{IPS}, \cite{sparsetriangle}) is the triangle averaging operator given at scale $t>0$ by
\begin{equation}
    \mathcal{T}_{t}(f,g)(x)=\int_{\mathcal{I}} f(x-ty)g(x-tz)d\mu(y,z),
\end{equation}
where $\mu$ is the natural surface measure on the submanifold 
$$\mathcal{I}=\{(y,z)\in \R^{2d}\colon |y|=|z|=|y-z|=1 \}.$$

By fixing an equilateral triangle in $\R^d$, that is, $u,v\in \R^d$ with $|u|=|v|=|u-v|=1$, one can also write 
\begin{equation}
    \mathcal{T}_{t}(f,g)(x)=\int_{O(d)} f(x-tRu)g(x-tRv)d\mu(R),
\end{equation}
where $\mu$ is the normalized Haar measure on the group $O(d)$, the orthogonal group in $\R^d$, a fact which was already exploited in \cite{IPS}, for example. The study of the triangle averaging operator is closely related to Falconer type results for three point configurations of points in compact sets of $\R^d$ with large enough Hausdorff dimension (\cite{threepointconf}, \cite{GGIP}, \cite{IL}).

In this paper, we give a partial description of the Lebesgue boundedness region of the associated lacunary bilinear triangle averaging operator
\begin{equation}
    \mathcal{T}_{lac}(f,g)(x)=\sup_{l\in \Z} |\mathcal{T}_{2^l}(f,g)(x)|.
\end{equation}

Such region will be closely related to parameters for which one has H\"older bounds for the bilinear triangle averaging operator $\mathcal{T}_1$. Let $d\geq 2$. Denote by $\mathcal{R}_1$ the region inside $[0,1]\times [0,1]$ given by the convex closure of the points 
\begin{equation}\label{regionr1}
    (0,0),\,(0,1),\,(1,0)\text{ and }\left(\frac{d}{d+1},\frac{d}{d+1}\right).
\end{equation}

\begin{thm}[Bounds for the lacunary triangle averaging maximal function]\label{boundstriangle}
   Assume $d\geq 7$. For any $(\frac{1}{p},\frac{1}{q})\in \text{int}{(\mathcal{R}_1)}$, and $r\in (0, \infty)$ given by the H\"older relation $\frac{1}{r}=\frac{1}{p}+\frac{1}{q}$, it holds that
    \begin{equation}
\|\mathcal{T}_{lac}(f,g)\|_{r}\lesssim  \|f\|_p\|g\|_q.
    \end{equation}
    Moreover, for any $d\geq 2$ boundedness holds for $(1/p,1/q)$ in the half open segment connecting $(0,0)$ to $(0,1)$, or in the half open segment connecting $(0,0)$ to $(1,0)$. That is, for any $q\in (1,\infty]$, we have 
$$ \|\mathcal{T}_{lac}(f,g)\|_q\lesssim \|f\|_{\infty} \|g\|_q$$
and 
$$\|\mathcal{T}_{lac}(f,g)\|_q\lesssim \|f\|_q \|g\|_{\infty}.$$
    
\end{thm}

\begin{rem}
    One can also state the theorem above in terms of $(1/p,1/q,1/r)$ in the interior of the H\"older boundedness region of $\mathcal{T}_1$, namely 
    $$\mathcal{B}_{\mathcal{T}_1}=:\{(1/p,1/q,1/r)\colon (p,q,r)\in [1,\infty]^2\times (0,\infty],\,1/p+1/q=1/r\,\text{ and } \|\mathcal{T}_1\|_{L^p\times L^q\rightarrow L^r}<\infty\}.$$
    One difficulty is that the sharp description of $\mathcal{B}_{\mathcal{T}_1}$ is not known, and all we know so far is that it contains the H\"older exponents $(1/p,1/q,1/r)$ with $(1/p,1/q)$ in the quadrilateral $\mathcal{R}_1$. The main reason why we have to restrict ourselves to $d\geq 7$ in the theorem above is due to the fact that in Proposition \ref{PS ij estimate} we are only able to prove decay bounds at the exponent $L^2\times L^2\rightarrow L^1$ when $d\geq 7$.
\end{rem}

There are several common features to deducing the desired bounds for the lacunary bilinear maximal operators. We generally follow a scheme of extending boundedness estimates for single scale averages to the lacunary maximal functions by proving decay bounds for pieces of the single averaging operator in which we localize the Fourier support in both variables separately.

 The passage from the decay bounds for pieces of the single scale averaging operators $\mathcal{A}_{\mu,1}$ to bounds for the lacunary bilinear maximal functions can be seen as an adaptation of some ideas in \cite{HHY} where, inspired by \cite{DuoanVargas} in the linear case, they were able to prove bounds for the multi-scale operator $\mathcal{M}(f,g)(x)=\sup_{t>0}|\mathcal{A}_t(f,g)(x)|$ by proving decay bounds for frequency localized pieces of the single scale operator $\tilde{\mathcal{M}}(f,g)(x)=\sup_{t\in [1,2]} |\mathcal{A}_t(f,g)|$.
 
In the proof of the decay bounds for frequency localized pieces of $\mathcal{A}_1$ and more generally $\mathcal{A}^{\mathbf{a}}_1$, one of the main innovations is that we combine the coarea formula used in \cite{JL} with polar coordinates in the ball to write in the case of $\mathcal{A}_1$  
\begin{equation}
 \mathcal{A}_{1}(f,g)(x)=\int_{0}^{1} \lambda^{d-1}(1-\lambda^2)^{\frac{d-2}{2}}A_{\lambda}f(x)A_{\sqrt{1-\lambda^2}}g(x) d\lambda,  
\end{equation}
and this equality can be used to get decay bounds for the bilinear averages from the well understood decay bounds for linear spherical averages. A similar strategy is used for $\mathcal{A}_{1}^{\mathbf{a}}$.

\textbf{Plan of the article.} We first introduce in Section \ref{preliminaries} some preliminary facts that will be useful along the paper. In Section \ref{sectiondecayforpieces}, we describe how to break down the bilinear spherical averaging operator into pieces and we obtain decay estimates for those pieces. Those estimates will be key in the proof of Theorem \ref{boundsspherical}, which is given in Section \ref{prooffirstthm}. In Section \ref{proofsecondthm}, we prove Theorem \ref{boundstriangle}. In that section we will get decay estimates for the pieces of the triangle averaging operator by making use of a bilinear multiplier boundedness criteria that we recall in Section \ref{preliminaries}. In Section \ref{leeshuinsection}, we show how the methods we used to prove the decay of the pieces of the bilinear spherical maximal function can be generalized for a more general class of bilinear averaging operators, as stated in Theorem \ref{lacunaryleeshuin}.\\

\textbf{Notation.}
Throughout this paper for $p\in (0,\infty]$, $\|\cdot\|_p$ stands for the usual norm in $L^p(\R^d)$, namely 
$$\|f\|_p=\left(\int_{\R^d} |f(x)|^p dx\right)^{1/p}, \,\text{if } p<\infty.$$
When $0<p<1$, even though $\|\cdot\|_p$ is not a norm, $\|\cdot\|_p^p$ satisfies the triangle inequality, which will be sufficient for the applications we need. For two quantities $A,B\geq 0$, $A\lesssim B$ means that there exists $C>0$ such that $A\leq C B$. For a measurable set $E\subset \R^d$, we let $\chi_E$ denote the indicator function of the set $E$. In addition to the previously defined maximal operators, we will denote by $Mf$ the Hardy-Littlewood maximal function function of $f$. The linear spherical average at scale $t$ is denoted by $A_t$. We will use $d\sigma_k$ to denote the natural surface measure on $S^{k}$. When $k=2d-1$, we drop the subscript and write only $d\sigma$. We will use $d\mu$ to denote the natural surface measure on the triangle manifold $\mathcal{I}$.

\section{Preliminary facts}\label{preliminaries}

\subsection{Relevant facts about \texorpdfstring{$\mathcal{A}_1$}{Lg}}

\begin{thm}[\cite{IPS}] Let $d\geq 2$, then
\begin{equation}\label{boundA1}
    \mathcal{A}_1:L^1(\R^d)\times L^1(\R^d)\rightarrow L^{1/2}(\R^d) \text{ is bounded.}
\end{equation}
\end{thm}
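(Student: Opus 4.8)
The plan is to deduce the $L^{1/2}$ bound from a \emph{local} $L^1\times L^1\to L^1$ estimate on a single unit cube, exploiting that $\mathcal A_1(f,g)(x)$ depends only on the values of $f$ and $g$ in the unit ball centred at $x$. First replace $f,g$ by $|f|,|g|$, so we may assume $f,g\ge 0$. Tile $\R^d=\bigsqcup_{k\in\Z^d}Q_k$ by unit cubes and let $Q_k^{*}$ be the closed $1$-neighbourhood of $Q_k$ (contained in a cube of side $3$, hence with bounded overlap). Since $\|h\|_{1/2}^{1/2}=\int |h|^{1/2}$ and $\mathcal A_1(f,g)\ge 0$, Cauchy--Schwarz on each $Q_k$ (where $|Q_k|=1$) gives
\[
\|\mathcal A_1(f,g)\|_{1/2}^{1/2}=\sum_{k}\int_{Q_k}\mathcal A_1(f,g)^{1/2}\le\sum_{k}\Big(\int_{Q_k}\mathcal A_1(f,g)\Big)^{1/2}.
\]
So it suffices to prove $\int_{Q}\mathcal A_1(f,g)\lesssim_d\|f\chi_{Q^{*}}\|_1\|g\chi_{Q^{*}}\|_1$ for every unit cube $Q$: a second Cauchy--Schwarz in $k$ together with the bounded overlap of the $Q_k^{*}$ yields $\sum_k\|f\chi_{Q_k^{*}}\|_1^{1/2}\|g\chi_{Q_k^{*}}\|_1^{1/2}\le\big(\sum_k\|f\chi_{Q_k^{*}}\|_1\big)^{1/2}\big(\sum_k\|g\chi_{Q_k^{*}}\|_1\big)^{1/2}\lesssim_d\|f\|_1^{1/2}\|g\|_1^{1/2}$, and squaring concludes.

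For the local bound, Tonelli's theorem gives $\int_{Q}\mathcal A_1(f,g)=\int_{S^{2d-1}}\big(\int_{\R^d}\chi_Q(x)f(x-y)g(x-z)\,dx\big)\,d\sigma(y,z)$. When $x\in Q$ and $(y,z)\in S^{2d-1}$ we have $|y|,|z|\le 1$, so $x-y,x-z\in Q^{*}$ and the inner integral is at most $\int_{\R^d}(f\chi_{Q^{*}})(u)\,(g\chi_{Q^{*}})(u+(y-z))\,du=:H(y-z)$, a function on $\R^d$ with $\|H\|_{L^1}=\|f\chi_{Q^{*}}\|_1\|g\chi_{Q^{*}}\|_1$. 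Hence $\int_Q\mathcal A_1(f,g)\le\int_{S^{2d-1}}H(y-z)\,d\sigma(y,z)=\int_{\R^d}H\,d\nu$, where $\nu$ is the pushforward of $\sigma_{2d-1}$ under $(y,z)\mapsto y-z$. Thus the local bound follows once we know $\nu$ has an $L^{\infty}$ density. For this I would use that $(y,z)\mapsto\big(\frac{y-z}{\sqrt2},\frac{y+z}{\sqrt2}\big)$ is an orthogonal transformation of $\R^{2d}$, so $\nu$ is the image under $v\mapsto\sqrt2\,v$ of the distribution of the first $d$ coordinates of a uniformly distributed point on $S^{2d-1}$; by the standard slicing formula for the sphere this distribution has density a constant multiple of $(1-|v|^2)^{(d-2)/2}$ on the unit ball of $\R^d$, so $\nu$ has density $p(v)=c_d(1-|v|^2/2)_+^{(d-2)/2}$, which is bounded precisely because $d\ge 2$. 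Then $\int_Q\mathcal A_1(f,g)\le\|p\|_\infty\|f\chi_{Q^{*}}\|_1\|g\chi_{Q^{*}}\|_1$.

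The single substantive point, and the only place $d\ge 2$ enters, is the boundedness of the density $p$ of the projected measure $\nu$; for $d=1$ one has $p(v)\sim(1-v^2/2)^{-1/2}$, unbounded near $v=\pm\sqrt2$, and the estimate genuinely fails. Everything else is routine: two uses of Cauchy--Schwarz, the finite-overlap count for the dilated cubes, Tonelli, and the ``unit speed of propagation'' of $\mathcal A_1$. Note that the same computation with $Q$ replaced by $\R^d$ already shows $\mathcal A_1\colon L^1\times L^1\to L^1$ with norm $\|p\|_\infty$; but interpolating that with the trivial bound $\mathcal A_1\colon L^\infty\times L^\infty\to L^\infty$ only produces estimates $L^p\times L^p\to L^p$ off the Hölder line $1/p+1/q=1/r$, so it is precisely the cube localization that converts the $L^1$ information into the target exponent $r=1/2$ at $(p,q)=(1,1)$.
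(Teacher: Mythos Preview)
The paper does not give its own proof of this statement; it is quoted from \cite{IPS} and used as a black box. Your argument is correct and is essentially the one in that reference: first the local $L^1\times L^1\to L^1$ bound via the boundedness of the density of the pushforward of $\sigma_{2d-1}$ under $(y,z)\mapsto y-z$ (your orthogonal change of variables plus the marginal density formula is exactly the Jeong--Lee slicing identity recalled later in the paper), and then the passage from target $L^1$ to the H\"older target $L^{1/2}$ by cube localization and Cauchy--Schwarz, which is the special case $(p,q,r)=(1,1,\tfrac12)$ of Proposition~4.1 in \cite{IPS} that the present paper mentions in a remark. Your identification of $d\ge 2$ as precisely the condition making the projected density bounded is also the correct explanation for why the theorem fails when $d=1$.
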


For $p,q,r\in [1,\infty]$ such that $\frac{1}{r}=\frac{1}{p}+\frac{1}{q}$ (that is, we are in the Banach case when $r\geq 1$), it is immediate from Minkowski's inequality combined with H\"older's inequality that 
$$\mathcal{A}_1:L^{p}(\R^d)\times L^{q}(\R^d)\rightarrow L^{r}(\R^d)\text{ is bounded.}$$

Interpolating the bound in (\ref{boundA1}) with the trivial bounds above, one gets the following corollary.

\begin{cor} \label{holderboundsforA1}Let $d\geq 2$. Then for all $1\leq p,q \leq \infty$ and $r\in (0,\infty]$ given by the H\"older relation  $1/r=1/p+1/q$, one has that
\begin{equation}
    \mathcal{A}_1:L^{p}(\R^d)\times L^{q}(\R^d)\rightarrow L^r(\R^d) \text{ is bounded.}
\end{equation}

\end{cor}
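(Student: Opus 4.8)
The plan is to obtain the full H\"older range by multilinear interpolation between the single endpoint estimate (\ref{boundA1}), namely $\mathcal{A}_1 : L^1(\R^d) \times L^1(\R^d) \to L^{1/2}(\R^d)$, and the family of trivial Banach-range estimates $\mathcal{A}_1 : L^{p}(\R^d) \times L^{q}(\R^d) \to L^{r}(\R^d)$ for $1/r = 1/p + 1/q \le 1$ recorded just above via Minkowski's and H\"older's inequalities. Parametrizing the H\"older surface by $(1/p,1/q) \in [0,1]^2$, the sub-region $1/p + 1/q \le 1$ (equivalently $r \ge 1$) is already covered, so it suffices to treat a target $(1/p,1/q)$ with $1/p + 1/q \in (1,2]$.

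For such a target, I would exhibit it as a convex combination of the exotic point $(1,1)$ and a point on the Banach boundary $\{1/p_1 + 1/q_1 = 1\}$. Concretely, set $s := 2 - 1/p - 1/q \in [0,1)$ and
\[ \frac{1}{p_1} := \frac{1 - 1/q}{s}, \qquad \frac{1}{q_1} := \frac{1 - 1/p}{s}, \]
so that $\left(\tfrac{1}{p},\tfrac{1}{q}\right) = (1-s)(1,1) + s\left(\tfrac{1}{p_1},\tfrac{1}{q_1}\right)$. One checks directly that $(1/p_1,1/q_1) \in [0,1]^2$ with $1/p_1 + 1/q_1 = 1$, and that the exponent produced by interpolation satisfies $1/r = 2(1-s) + s = 2 - s = 1/p + 1/q$, consistent with the H\"older relation. (The degenerate case $s = 0$ is $p = q = 1$, which is exactly (\ref{boundA1}).) Since $\mathcal{A}_1 : L^{p_1} \times L^{q_1} \to L^1$ holds by the trivial bounds, interpolating it with (\ref{boundA1}) — the exotic endpoint receiving weight $1-s$ — yields $\mathcal{A}_1 : L^{p} \times L^{q} \to L^{r}$, which is the claim.

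The one point requiring care — and the only genuine obstacle — is that the target space $L^r$ may have $r < 1$, so it is not normed and classical Riesz--Thorin interpolation does not apply verbatim. This is handled in a standard way: one invokes a version of bilinear complex interpolation valid for quasi-Banach lattices (analytic families of operators taking values in $L^r$, $0 < r < 1$), or, alternatively, one exploits that $\mathcal{A}_1$ is integration against a nonnegative measure and hence maps nonnegative functions to nonnegative functions, reducing to $f,g \ge 0$ and running a direct interpolation argument. With that in place the corollary follows immediately from the convex-combination bookkeeping above.
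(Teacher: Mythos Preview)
Your proposal is correct and follows essentially the same approach as the paper: the paper simply states that the corollary follows by interpolating the endpoint bound (\ref{boundA1}) with the trivial Banach-range H\"older bounds, and you have supplied the explicit convex-combination bookkeeping together with a remark on handling the quasi-Banach target space when $r<1$. No additional ideas are needed.
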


\begin{rem}
Due to the single scale nature of $\mathcal{A}_1$, one can also prove $L^p$ improving bounds for $\mathcal{A}_1$, that is, boundedness from $L^p\times L^q$ into $L^r$ for $1/r<1/p+1/q$. However, the sharp region of parameters for which this holds is still not known. So far, all it is known is that it contains the known boundedness region of the localized bilinear maximal function $\tilde{\mathcal{M}}=\sup_{t\in [1,2]}|\mathcal{A}_{t}(\cdot, \cdot)|$ (see \cite{JL} or \cite{BFOPZ} for the description of that region), and also the points $(1,1,1)$ and $(1,1,1/2)$ \cite{IPS}. For $d=1$ there is a better understanding of the $L^p$ improving region given by the results in \cite{SS},\cite{DOberlin}, and \cite{BS}. For the purposes of this article we will not need $L^p$ improving estimates and Corollary \ref{holderboundsforA1} will be enough.

\end{rem}

The sharp boundedness region for the full bilinear maximal function in $d\geq 2$ is known due to Jeong and Lee \cite{JL}:
\begin{thm}[\cite{JL}]\label{boundsjl} Let $d\geq 2$, $p,q\in [1,\infty]$ and $r\in (0,\infty]$. Then the estimate 
\begin{equation}
   \|\mathcal{M}(f,g)\|_r\lesssim \|f\|_{p}\|g\|_q
\end{equation}
holds if and only if $1/r=1/p+1/q$ and $1/r<(2d-1)/d$, except for the case $(p,q,r)=(1,\infty, 1)$ and $(p,q,r)=(\infty,1,1)$ where the boundedness fails. Also one has weak type bounds
$$\|\mathcal{M}(f,g)\|_{L^{1,\infty}}\lesssim \|f\|_1\|g\|_{\infty}$$
and 
$$\|\mathcal{M}(f,g)\|_{L^{1,\infty}}\lesssim \|f\|_{\infty}\|g\|_{1}.$$
    
\end{thm}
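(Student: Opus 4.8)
The plan is to prove Theorem \ref{boundsjl} (the full sharp bounds for $\mathcal{M}$ in $d \geq 2$, due to Jeong and Lee) by exploiting the slicing/coarea formula that reduces the bilinear spherical average to a product of a Hardy--Littlewood maximal operator and a linear spherical maximal operator. First I would recall the coarea identity: writing a point of $S^{2d-1} \subset \R^{2d}$ in the form $(y,z)$ with $|y|^2 + |z|^2 = 1$, one slices according to the value $s = |y| \in [0,1]$, so that $y = s\omega$ with $\omega \in S^{d-1}$ and $z = \sqrt{1-s^2}\,\eta$ with $\eta \in S^{d-1}$. This produces a Fubini-type decomposition
\begin{equation}
    \mathcal{A}_t(f,g)(x) = c_d \int_0^1 \left( \int_{S^{d-1}} f(x - ts\omega)\, d\sigma_{d-1}(\omega) \right)\left( \int_{S^{d-1}} g(x - t\sqrt{1-s^2}\,\eta)\, d\sigma_{d-1}(\eta) \right) w_d(s)\, ds,
\end{equation}
for an explicit weight $w_d(s) = c_d s^{d-1}(1-s^2)^{(d-2)/2}$. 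Taking absolute values and the supremum over $t > 0$ inside, the inner $\omega$-average is bounded by $\mathcal{S}f(x)$ uniformly in $ts$, and likewise the inner $\eta$-average by $\mathcal{S}g(x)$; since $\int_0^1 w_d(s)\, ds < \infty$ this yields the pointwise bound $\mathcal{M}(f,g)(x) \lesssim \mathcal{S}f(x)\,\mathcal{S}g(x)$.

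Next I would deduce the stated Lebesgue bounds from this pointwise inequality together with the known mapping properties of $\mathcal{S}$. By Stein's theorem ($d \geq 3$) and Bourgain's theorem ($d = 2$), $\mathcal{S}$ is bounded on $L^p(\R^d)$ for $p > d/(d-1)$, and satisfies the weak-type endpoint bound on $L^{d/(d-1)}$. Hölder's inequality then gives $\|\mathcal{M}(f,g)\|_r \leq \|\mathcal{S}f\|_p \|\mathcal{S}g\|_q \lesssim \|f\|_p \|g\|_q$ whenever $1/p + 1/q = 1/r$ with $p, q > d/(d-1)$; for the regime where only one of $p, q$ is allowed below $d/(d-1)$ (down to the line $1/r < (2d-1)/d$) one instead uses that the slicing weight concentrates $f$ near small radii — more precisely, for $s$ small the argument $x - ts\omega$ is close to $x$, so one can absorb part of the $f$-average into an $M f$ factor rather than an $\mathcal{S}f$ factor. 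Thus one also has $\mathcal{M}(f,g)(x) \lesssim M f(x)\, \mathcal{S}g(x)$ and the symmetric version, and interpolating/combining these pointwise bounds with the $L^p$ boundedness of $M$ (all $p > 1$) and of $\mathcal{S}$ covers the full claimed region $1/r < (2d-1)/d$. The weak-type statements at $(1,\infty,1)$ and $(\infty,1,1)$ follow from $\mathcal{M}(f,g) \lesssim \mathcal{S}f \cdot \|g\|_\infty$ combined with the weak $(1,1)$-type bound for $\mathcal{S}$ at the endpoint... actually more simply from $\mathcal{M}(f,g) \lesssim M f \cdot \|g\|_\infty$ and the weak $(1,1)$ bound for $M$, since one only needs a factor of $M f$ here.

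For the necessity direction (the ``only if''), the Hölder relation $1/r = 1/p + 1/q$ is forced by testing on dilates $f_\lambda = f(\lambda \cdot)$, $g_\lambda = g(\lambda \cdot)$ and letting $\lambda \to 0$ or $\lambda \to \infty$, using that $\mathcal{A}_t(f_\lambda, g_\lambda)(x) = \mathcal{A}_{\lambda t}(f,g)(\lambda x)$, which relates the scaling of the norms on the two sides. The condition $1/r < (2d-1)/d$ comes from a Knapp-type example: take $f = g = \chi_{B(0,\delta)}$; then for $t \approx 1$ and $|x| \lesssim \delta$ one has $\mathcal{A}_t(f,g)(x) \gtrsim \delta^{2(d-1)}$ on a set of measure $\gtrsim \delta^d$ coming from varying $t$, while $\|f\|_p \|g\|_q \approx \delta^{d/p + d/q} = \delta^{d/r}$; comparing $\delta^{2(d-1)} \cdot \delta^{d/r} \lesssim \delta^{d/r}$... rather one needs $\|\mathcal{M}(f,g)\|_r \gtrsim \delta^{2(d-1)} \delta^{d/r}$ against $\delta^{d/r}$, which forces $2(d-1) \geq d/r$, i.e. $1/r \leq 2(d-1)/d = (2d-1)/d - 1/d$; a slightly more careful example (spreading $x$ over the annulus swept out as $t$ ranges over $[1,2]$) sharpens this to the stated strict inequality $1/r < (2d-1)/d$. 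Finally, failure of the strong bound exactly at $(1,\infty,1)$ is shown by exhibiting $f \in L^1$ with $\mathcal{S}f \notin L^1$ (the standard counterexample $f(x) = |x|^{-d} (\log 1/|x|)^{-2} \chi_{B(0,1/2)}$ works since the spherical average over spheres through the origin already fails to be integrable), and taking $g \equiv 1$ locally. The main obstacle I anticipate is organizing the interpolation between the several pointwise domination inequalities ($\mathcal{S}f \cdot \mathcal{S}g$, $Mf \cdot \mathcal{S}g$, $\mathcal{S}f \cdot Mg$) so as to exactly recover the region $1/r < (2d-1)/d$ rather than a strictly smaller one, and in handling the $d = 2$ endpoint where $\mathcal{S}$ is unbounded on $L^2$, which forces one to be careful about which exponents the $\mathcal{S}$-factors are estimated in near the corners of the region.
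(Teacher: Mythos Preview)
This theorem is not proved in the present paper; it is quoted from \cite{JL} as background. The paper does describe the Jeong--Lee mechanism: the slicing identity
\[
\mathcal{A}_t(f,g)(x)=\int_{B^d(0,1)}f(x-ty)\,A_{t\sqrt{1-|y|^2}}(g)(x)\,(1-|y|^2)^{(d-2)/2}\,dy
\]
leads to the pointwise bound $\mathcal{M}(f,g)(x)\lesssim Mf(x)\,\mathcal{S}g(x)$, and your sufficiency sketch follows exactly this route. (The intermediate bound $\mathcal{M}(f,g)\lesssim\mathcal{S}f\cdot\mathcal{S}g$ you write first is correct but strictly weaker and plays no role.)

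There are, however, two concrete gaps. First, $Mf\cdot\mathcal{S}g$ combined with H\"older only gives strong bounds for $p>1$ and $q>d/(d-1)$, and symmetrically $p>d/(d-1)$, $q>1$; the union of these two rectangles does \emph{not} contain the boundary segments $p=1$, $q\in(d/(d-1),\infty)$ (and their reflections) that the theorem asserts. Reaching $p=1$ requires an additional step---for instance a Lorentz-space H\"older inequality to obtain weak-type bounds along the whole line $p=1$, followed by Marcinkiewicz interpolation in $q$---which you do not indicate. Relatedly, your claim that $\mathcal{S}$ satisfies a weak-type bound at the endpoint $L^{d/(d-1)}$ is a well-known open problem, not an available tool; it should not appear in the argument.

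Second, your necessity example is set up in the wrong regime. With $f=g=\chi_{B(0,\delta)}$, $t\approx 1$, and $|x|\lesssim\delta$ one actually gets $\mathcal{A}_t(f,g)(x)=0$: the constraint $|y|^2+|z|^2=1$ forbids $|y|,|z|\lesssim\delta$ simultaneously, so there is no contribution. The correct computation is for large $|x|$: choosing $t=\sqrt{2}\,|x|$ so that $(x/t,x/t)\in S^{2d-1}$, the cap of radius $\sim\delta/|x|$ about that point lies in the support, giving $\mathcal{M}(f,g)(x)\gtrsim(\delta/|x|)^{2d-1}$ for all $|x|\gg\delta$. This function belongs to $L^r(\R^d)$ if and only if $(2d-1)r>d$, i.e.\ $1/r<(2d-1)/d$; when $1/r\geq(2d-1)/d$ the $L^r$ norm is infinite while $\|f\|_p\|g\|_q\sim\delta^{d/r}$ is finite. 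The obstruction is thus non-integrability at infinity, not a small-cap Knapp estimate. Your argument for failure at $(1,\infty,1)$ has a similar misdirection: one needs a \emph{lower} bound $\mathcal{M}(f,1)(x)\gtrsim Mf(x)$ (which the slicing gives), not the fact that $\mathcal{S}f\notin L^1$.
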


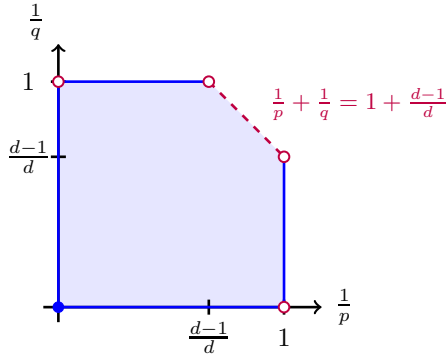
\begin{figure}[h]
\begin{center}
         \scalebox{1}{
\begin{tikzpicture}
\fill[blue!10!white] (0,0)--(3,0)--(3,2)--(2,3)--(0,3)--(0,0);

\draw (-0.3,3.8) node {$\frac{1}{q}$};
\draw (3.8,0) node {$\frac{1}{p}$};
\draw (3,-0.4) node {$1$};
\draw (-0.4,3) node {$1$};
\draw (-0.4,2) node{$\frac{d-1}{d}$};
\draw (2,-0.4) node{$\frac{d-1}{d}$};

\draw[->,line width=1pt] (-0.2,0)--(3.5,0);
\draw[->,line width=1pt] (0,-0.2)--(0,3.5);
\draw[-,line width=1pt] (3,-0.1)--(3,0.1);
\draw[-,line width=1pt] (-0.1,3)--(0.1,3);
\draw[-,line width=1pt] (-0.1,2)--(0.1,2);
\draw[-,line width=1pt] (2,-0.1)--(2,0.1);

\draw[line width=1pt,blue,line width=1pt] (0,0)--(3,0)--(3,2);
\draw[line width=1pt,blue,line width=1pt] (2,3)--(0,3)--(0,0);
\draw[dashed,purple,line width=1pt] (2,3)--(3,2);

\draw[purple] (4,2.7) node {\small{$\frac{1}{p}+\frac{1}{q}=1+\frac{d-1}{d}$}};

\fill[white] (3,0) circle (2pt);
\fill[white] (0,3) circle (2pt);
\fill[white] (3,2) circle (2pt);
\fill[white] (2,3) circle (2pt);
\filldraw[blue] (0,0) circle (2pt);
\draw[purple,line width=0.75pt] (3,0) circle (2pt);
\draw[purple,line width=0.75pt] (0,3) circle (2pt);
\draw[purple,line width=0.75pt] (3,2) circle (2pt);
\draw[purple,line width=0.75pt] (2,3) circle (2pt);

\end{tikzpicture}}
\end{center}
\caption{Jeong and Lee boundedness region for the bilinear spherical maximal function when $d\geq 2$.}
\end{figure}

\subsection{Relevant facts about \texorpdfstring{$\mathcal{T}_1$}{Lg}}

\begin{thm}{\cite{IPS}}
The triangle averaging operator satisfies the bound
$$\mathcal{T}_1:L^{\frac{d+1}{d}}(\R^d)\times L^{\frac{d+1}{d}}(\R^d) \rightarrow L^{s}(\R^d), \text{ for all }s\in [(d+1)/2d,1],\,d\geq 2.$$
Moreover, when the target space is $L^1(\R^d)$ one has a sharp description of the boundedness region, namely
$$\mathcal{T}_1: L^{p}(\R^d)\times L^{q}(\R^d)\rightarrow L^{1}(\R^d)$$
if and only if $(\frac{1}{p},\frac{1}{q})$ lies in the convex hull of the points ${(0,1),(1,0)}$ and $(\frac{d}{d+1},\frac{d}{d+1})$.
\end{thm}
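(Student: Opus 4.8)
The plan is to isolate one core bilinear estimate at the ``corner'' $p=q=\frac{d+1}{d}$, namely
$$\mathcal{T}_1\colon L^{\frac{d+1}{d}}(\R^d)\times L^{\frac{d+1}{d}}(\R^d)\to L^1(\R^d),$$
and to deduce everything else from it by interpolation and sharpness examples. First I would record the soft ingredients: since $\mu$ is a probability measure, $|\mathcal{T}_1(f,g)(x)|\le\|f\|_\infty\int_{\mathcal I}|g(x-z)|\,d\mu(y,z)$, so Fubini and translation invariance give $\mathcal{T}_1\colon L^\infty\times L^1\to L^1$ with constant $1$, and symmetrically $\mathcal{T}_1\colon L^1\times L^\infty\to L^1$. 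For the geometry I would write $\mathcal{T}_1=T_{\widehat\mu}$ as a bilinear Fourier multiplier operator and use the fibration $\mathcal I\to S^{d-1}$, $(y,z)\mapsto y$, whose fibre over $y$ is the $(d-2)$-sphere $S_y=\{z:\ z\cdot y=\tfrac12,\ |z|=1\}$ of radius $\tfrac{\sqrt3}{2}$ in the affine hyperplane $\tfrac y2+y^\perp$. This gives $\widehat\mu(\xi,\eta)=\int_{S^{d-1}}\widehat{d\sigma_{S_y}}(\eta)\,e^{-i\xi\cdot y}\,d\sigma(y)$, where $\widehat{d\sigma_{S_y}}(\eta)=e^{-i(y\cdot\eta)/2}\,\widehat{d\sigma_{S^{d-2}}}(P_{y^\perp}\eta)$ is, up to a modulation, a $(d-2)$-dimensional spherical transform of the component of $\eta$ in $y^\perp$. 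A stationary phase analysis of this $y$-integral then yields $|\widehat\mu(\xi,\eta)|\lesssim\langle(\xi,\eta)\rangle^{-(d-1)/2}$ globally — the worst rate, realized near $\eta=0$ (where $\widehat\mu(\xi,0)=c\,\widehat{d\sigma_{S^{d-1}}}(\xi)$) and near the analogous degenerate loci — with the improved decay $|\widehat\mu(\xi,\eta)|\lesssim\langle(\xi,\eta)\rangle^{-(2d-3)/2}$ on the elliptic region $|\xi|\sim|\eta|\sim\langle(\xi,\eta)\rangle$ bounded away from those loci, together with the corresponding symbol-type estimates for derivatives.

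For the core estimate I would decompose $\widehat\mu=\sum_{j\ge0}m_j$ with $\mathrm{supp}\,m_j\subset\{|(\xi,\eta)|\sim 2^j\}$ and split each $m_j$ into a \emph{mixed} part (where $|\xi|$ and $|\eta|$ are very different, so that one input is frequency-localized well below $2^j$) and a \emph{high--high} part ($|\xi|\sim|\eta|\sim 2^j$). On the mixed part one argues in the spirit of the slicing of \cite{JL} applied in a single variable: with $\widehat g$ localized to $|\eta|\ll 2^j$ the inner codimension-$2$ average $A_yg(x)=\int_{S_y}g(x-z)\,d\sigma_y(z)$ is under control, and together with the decay $2^{-j(d-1)/2}$ of $\widehat\mu$ there and Plancherel in the high variable this yields bounds for $T_{m_j^{\mathrm{mix}}}$ (e.g. on $L^2\times L^\infty\to L^2$, $L^\infty\times L^2\to L^2$, and $L^2\times L^2\to L^1$) that are summable in $j$ precisely because $d\ge2$; interpolating these with the trivial bounds handles the mixed part on the exponents we need. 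On the high--high part one exploits the better decay $2^{-j(2d-3)/2}$ together with a genuinely bilinear $L^2$-based estimate — a bilinear restriction/extension estimate for the (rescaled) symbol, or a $TT^*$/square-function argument — to obtain $\|T_{m_j^{\mathrm{hh}}}\|_{L^{(d+1)/d}\times L^{(d+1)/d}\to L^1}\lesssim 2^{-\epsilon j}$ for some $\epsilon>0$, and summing in $j$ gives the core estimate. Running the same dyadic summation with the H\"older-critical target $L^{(d+1)/(2d)}$ gives $\mathcal{T}_1\colon L^{(d+1)/d}\times L^{(d+1)/d}\to L^{(d+1)/(2d)}$, whereupon H\"older (log-convexity of the $L^s$ norms) fills the whole range $s\in[\tfrac{d+1}{2d},1]$.

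With the core estimate available, bilinear interpolation of $\mathcal{T}_1\colon L^{(d+1)/d}\times L^{(d+1)/d}\to L^1$ against $\mathcal{T}_1\colon L^\infty\times L^1\to L^1$ and against $\mathcal{T}_1\colon L^1\times L^\infty\to L^1$ (the target $L^1$ being held fixed) produces $\mathcal{T}_1\colon L^p\times L^q\to L^1$ along the two edges of the triangle through $(\tfrac d{d+1},\tfrac d{d+1})$, and interpolating across these edges fills the whole closed triangle; this is the ``if'' direction. For ``only if'' there are three obstructions, one per edge. The edge $\tfrac1p+\tfrac1q=1$ is forced by slowly decaying $f=(1+|x|)^{-\alpha}$, $g=(1+|x|)^{-\beta}$: if $\tfrac1p+\tfrac1q<1$ one may take $\alpha>d/p$, $\beta>d/q$ with $\alpha+\beta<d$, so $f\in L^p$, $g\in L^q$ but $\mathcal{T}_1(f,g)(x)\sim|x|^{-\alpha-\beta}\notin L^1$. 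The curved edges $\tfrac1p+\tfrac dq\le d$ and $\tfrac dp+\tfrac1q\le d$ come from Knapp-type examples adapted to the fibred structure of $\mathcal I$ — taking one of $f,g$ to be the indicator of a suitably anisotropic $\delta$-neighbourhood of a sub-configuration and optimizing the resulting inequality in $\delta$ — and I would defer these routine optimizations to \cite{IPS}.

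The main obstacle is the high--high (elliptic) piece of the core estimate. It cannot be obtained by estimating one input with a linear spherical average and the other with the (codimension-two) spherical maximal function, since the latter is unbounded on $L^{(d+1)/d}$ for every $d\ge2$; it genuinely needs a bilinear oscillatory-integral input using the full $(2d-3)/2$-decay of $\widehat\mu$ on the elliptic region, pushed all the way down to the $L^1$ endpoint. A secondary difficulty is the geometry of the degenerate set of $\widehat\mu$ — checking that no direction gives decay worse than near $\{\xi\approx0\}$, $\{\eta\approx0\}$ (and $\{\xi+\tfrac12\eta\approx0\}$) and that the mixed pieces sitting over it sum — which is most delicate when $d=2$, where $\mathcal I$ is only a curve in $\R^4$.
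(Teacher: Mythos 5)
Your proposal should be measured against the fact that the paper does not prove this statement at all: it is quoted verbatim from \cite{IPS}, so the only ``proof'' in the paper is the citation, and your sketch has to stand on its own as an independent argument. As such it has a genuine gap at its center. The entire content of the positive result is concentrated in your asserted high--high estimate $\|T_{m_j^{\mathrm{hh}}}\|_{L^{(d+1)/d}\times L^{(d+1)/d}\to L^1}\lesssim 2^{-\epsilon j}$, and for this you offer only the names of possible tools (``bilinear restriction/extension, $TT^*$, square function'') rather than an argument. Below $L^2\times L^2$ Plancherel is unavailable, and the off-the-shelf multiplier criteria of the type used in this paper (Theorem \ref{ghscriteria}, Corollary \ref{corl2l2l1}) only produce $L^2\times L^2\to L^1$ bounds; nothing you cite yields decay at $p=q=\tfrac{d+1}{d}$. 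Moreover, the Fourier-decay input you feed into the scheme is stronger than what is known: the estimate recorded in this paper's appendix from \cite{triangleaveraging} is $|\partial^\alpha_\xi\partial^\beta_\eta\hat\mu(\xi,\eta)|\lesssim(1+\min\{|\xi|,|\eta|\}|\sin\theta|)^{-\frac{d-2}{2}}(1+|(\xi,\eta)|)^{-\frac{d-2}{2}}$, not the uniform $(d-1)/2$ nor the elliptic $(2d-3)/2$ rates you assert by an unperformed stationary-phase analysis. The defect is most visible at $d=2$: there the fibres $S_y$ are two points, so there is no fibre decay whatsoever, yet the theorem is claimed for all $d\ge2$; a dyadic decay-summation of the kind you outline cannot reach $d=2$ (and with the known exponents it is not even summable for small $d$), so the mechanism itself, not just its write-up, is insufficient in the stated generality.

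Two further steps are also unjustified. First, the endpoint $s=\tfrac{d+1}{2d}$: ``running the same dyadic summation with the H\"older-critical target'' is exactly where geometric decay of frequency-localized pieces typically disappears (compare Proposition \ref{nodecay}, which gives only uniform, non-decaying bounds at H\"older exponents), so this endpoint does not follow from your scheme as described. The clean route is to exploit the unit-scale support of the averaging kernel: tile $\R^d$ by unit cubes, apply the $L^1$ corner bound on each cube, use $\|F\|_{L^s(Q)}\lesssim\|F\|_{L^1(Q)}$ for $s\le1$, and sum with H\"older in the cube sum using $\tfrac{1}{s}\le\tfrac{2d}{d+1}$; this derives the whole range $s\in[\tfrac{d+1}{2d},1]$ directly from the $L^1$ vertex bound and makes the ``log-convexity'' step unnecessary. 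Second, the ``only if'' half: your $\tfrac1p+\tfrac1q\ge1$ example is fine, but the two edges $\tfrac dp+\tfrac1q\le d$ and $\tfrac1p+\tfrac dq\le d$ are deferred wholesale to \cite{IPS}, so the sharpness claim is not actually established by your argument either. In short, the skeleton (slicing over $S^{d-1}$, the marginal computation, the interpolation of the corner bound with $L^1\times L^\infty\to L^1$ and $L^\infty\times L^1\to L^1$ to fill the closed triangle) is sound, but the decisive bilinear estimate at the vertex is missing and the proposed route to it does not work uniformly in $d\ge2$.
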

 
Again, for H\"older exponents $1/r=1/p+1/q$, with $r\geq 1$, the boundedness of $\mathcal{T}_1$ follows directly from Minkowski's inequality for integrals and H\"older's inequality. Interpolating this with the bound $\mathcal{T}_1:L^{\frac{d+1}{d}}\times L^{\frac{d+1}{d}}\rightarrow L^{\frac{d+1}{2d}} $ above, one has the following corollary, where $\mathcal{R}_1$ is the region defined in (\ref{regionr1}).

\begin{cor} \label{holderboundsforT1}Let $d\geq 2$. Then for all $1\leq p,q \leq \infty$ and $r\in (0,\infty]$ given by the H\"older relation  $1/r=1/p+1/q$, and $(1/p,1/q)\in \mathcal{R}_1$, one has that 
\begin{equation}
    \mathcal{T}_1:L^{p}(\R^d)\times L^{q}(\R^d)\rightarrow L^r(\R^d) \text{ is bounded.}
\end{equation}
\end{cor}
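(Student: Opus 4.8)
The plan is to deduce Corollary~\ref{holderboundsforT1} entirely by interpolation, combining the endpoint estimate $\mathcal{T}_1:L^{(d+1)/d}\times L^{(d+1)/d}\to L^{(d+1)/(2d)}$ of \cite{IPS} with the elementary ``Banach'' bounds. First I would record the latter in the range $1/p_0+1/q_0\le 1$, i.e.\ $(1/p_0,1/q_0)$ in the closed triangle $T_0:=\operatorname{conv}\{(0,0),(1,0),(0,1)\}$: setting $1/r_0=1/p_0+1/q_0$ so that $r_0\ge 1$, Minkowski's integral inequality followed by H\"older's inequality and translation invariance gives
\[
\|\mathcal{T}_1(f,g)\|_{r_0}\le \int_{\mathcal{I}}\|f(\cdot-y)\,g(\cdot-z)\|_{r_0}\,d\mu(y,z)\le \int_{\mathcal{I}}\|f\|_{p_0}\|g\|_{q_0}\,d\mu(y,z)\lesssim \|f\|_{p_0}\|g\|_{q_0},
\]
using that $\mu$ has finite total mass; in particular the vertices $(1,0)$ and $(0,1)$ of $T_0$ yield $\mathcal{T}_1:L^1\times L^\infty\to L^1$ and $\mathcal{T}_1:L^\infty\times L^1\to L^1$.

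The second step is the convex geometry. By definition $\mathcal{R}_1=\operatorname{conv}\{(0,0),(0,1),(1,0),v\}$ with $v:=(\tfrac{d}{d+1},\tfrac{d}{d+1})$, and since $T_0\subset\mathcal{R}_1$ we have $\mathcal{R}_1=\operatorname{conv}(T_0\cup\{v\})$; consequently every $(1/p,1/q)\in\mathcal{R}_1$ can be written as $(1-\theta)(1/p_0,1/q_0)+\theta v$ for some $\theta\in[0,1]$ and $(1/p_0,1/q_0)\in T_0$. With $1/r_0=1/p_0+1/q_0$ as above, I would then interpolate $\mathcal{T}_1:L^{p_0}\times L^{q_0}\to L^{r_0}$ against $\mathcal{T}_1:L^{(d+1)/d}\times L^{(d+1)/d}\to L^{(d+1)/(2d)}$ with parameter $\theta$; this gives $\mathcal{T}_1:L^p\times L^q\to L^r$ with $1/r=(1-\theta)/r_0+\theta\cdot\tfrac{2d}{d+1}$, and a one-line computation shows $1/r=1/p+1/q$, so the H\"older relation comes out automatically and $r\in(0,\infty]$ is exactly as stated. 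When $\theta\in\{0,1\}$ or $(1/p_0,1/q_0)$ is a vertex of $T_0$ one recovers the boundary of $\mathcal{R}_1$ (the axis portions directly, the point $v$ from \cite{IPS}, the two slanted edges by interpolating the estimate at $(1,0)$ or $(0,1)$ with that at $v$); since all endpoint estimates are strong type, the full closed region $\mathcal{R}_1$ is covered with strong-type bounds.

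Because $\mu\ge 0$ one has $|\mathcal{T}_1(f,g)|\le \mathcal{T}_1(|f|,|g|)$, so it is harmless to take $f,g\ge 0$. The only point that is not purely formal is that the target exponent may be smaller than $1$ (already at $v$, where $r=(d+1)/(2d)<1$ for $d\ge 2$), so the interpolation above has to be the multilinear complex interpolation valid in the quasi-Banach range $0<r\le\infty$, not merely its Banach version; this is standard. I expect that to be the only delicate ingredient — the remainder is the convex geometry of $\mathcal{R}_1$ together with elementary estimates involving translations and the total mass of $\mu$; exponents with $p=\infty$ or $q=\infty$ require no extra care since they lie on the axes, where the elementary estimate applies directly.
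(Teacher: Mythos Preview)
Your proposal is correct and follows essentially the same approach as the paper: establish the Banach-range H\"older bounds via Minkowski and H\"older, then interpolate with the $L^{(d+1)/d}\times L^{(d+1)/d}\to L^{(d+1)/(2d)}$ estimate from \cite{IPS} to cover all of $\mathcal{R}_1$. Your write-up is in fact more careful than the paper's, which states the argument in a single sentence; your explicit mention of the need for quasi-Banach multilinear interpolation and the convex-geometry description of $\mathcal{R}_1$ are appropriate clarifications.
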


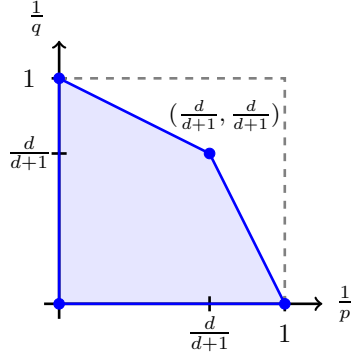
\begin{figure}[h]
\begin{center}
         \scalebox{1}{
\begin{tikzpicture}
\fill[blue!10!white] (0,0)--(3,0)--(2,2)--(0,3)--(0,0);

\draw (-0.3,3.8) node {$\frac{1}{q}$};
\draw (3.8,0) node {$\frac{1}{p}$};
\draw (3,-0.4) node {$1$};
\draw (-0.4,3) node {$1$};
\draw (-0.4,2) node{$\frac{d}{d+1}$};
\draw (2,-0.4) node{$\frac{d}{d+1}$};
\draw[->,line width=1pt] (-0.2,0)--(3.5,0);
\draw[->,line width=1pt] (0,-0.2)--(0,3.5);

\draw[-,line width=1pt] (3,-0.1)--(3,0.1);
\draw[-,line width=1pt] (-0.1,3)--(0.1,3);
\draw[-,line width=1pt] (-0.1,2)--(0.1,2);
\draw[-,line width=1pt] (2,-0.1)--(2,0.1);

\draw[line width=1pt,blue,line width=1pt] (0,0)--(3,0)--(2,2)--(0,3)--(0,0);

\draw[dashed,line width=0.5pt,gray,line width=1pt] (3,0)--(3,3)--(0,3);
\draw (2.2,2.5) node {\small{$(\frac{d}{d+1},\frac{d}{d+1})$}};

\filldraw[blue] (0,0) circle (2pt);
\filldraw[blue] (2,2) circle (2pt);
\filldraw[blue] (3,0) circle (2pt);
\filldraw[blue] (0,3) circle (2pt);
\end{tikzpicture}}
\end{center}
\caption{Region of pairs $(1/p,1/q)$ with $(1/p,1/q,1/r)\in\mathcal{R}_1$, which is the region where we know the H\"older bounds for $\mathcal{T}_1$ are satisfied.}
\end{figure}

\begin{rem}
It is not clear that the corollary above is sharp. Any bound for $\mathcal{T}_1$ of the form $L^p\times L^{q}\rightarrow L^1$, $1/p+1/q>1$, can be lifted to the H\"older bound $L^p\times L^q\rightarrow L^r$, where $1/r=1/p+1/q$ by using Proposition 4.1 in \cite{IPS}), and the bounds for $\mathcal{T}_1$ arriving in $L^1$ are sharp. So Corollary 6 is the best one can get from the lifting bounds into $L^1$ to H\"older bounds, but there could be some $r<1$ for which a H\"older bound $(p,q,r)$ holds but the bound $(p,q,1)$ is false. In the proposition below we include some necessary conditions for the boundedness of $\mathcal{T}_1$ that follow by adapting some examples in \cite{radonplane}.
\end{rem}

The description of the sharp region for which one has $L^p$ improving properties for $\mathcal{T}_1$ would have interesting applications in Sparse domination and weighted bounds for $\mathcal{T}_{lac}$ (see \cite{sparsetriangle},\,\cite{BFOPZ}). In the next proposition we prove some necessary conditions that might be be useful in the future. 

\begin{prop}(Necessary conditions for the boundedness of $\mathcal{T}_1$ in $d\geq 2$) \label{necessarytriangle}
Let $1\leq p,q\leq  \infty$ and $r\in (0,\infty]$. If $$\mathcal{T}_1:L^p\times L^q \rightarrow L^r\text{ is bounded, }$$
then 
\begin{equation}
    \begin{cases}
    \frac{1}{r}\leq \frac{1}{p}+\frac{1}{q}\\
    \frac{1}{p}+\frac{1}{q}\leq \frac{d}{r}\\
    \frac{d}{p}+\frac{1}{q}\leq d-1+\frac{1}{r}\\
    \frac{1}{p}+\frac{d}{q}\leq d-1+\frac{1}{r}
    \end{cases}
\end{equation}

\end{prop}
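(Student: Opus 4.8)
I would prove all four inequalities by testing the hypothesized estimate $\|\mathcal{T}_1(f,g)\|_r\lesssim\|f\|_p\|g\|_q$ against explicit pairs of indicator functions and letting a scale parameter degenerate; these are bilinear variants of the standard examples in \cite{radonplane}. Throughout I would use the representation $\mathcal{T}_1(f,g)(x)=\int_{O(d)}f(x-Ru)g(x-Rv)\,d\mu(R)$ for a fixed equilateral triangle $\{0,u,v\}$ with $|u|=|v|=|u-v|=1$, together with the elementary fact that $R\mapsto Ru$ pushes the Haar measure $\mu$ forward to the normalized surface measure on $S^{d-1}$, so that $\mu(\{R\in O(d):Ru\in B(x,\delta)\})\gtrsim\delta^{d-1}$ whenever $\big||x|-1\big|\le \delta/2$.

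For the scaling inequality $\frac1r\le\frac1p+\frac1q$, I would take $f=g=\chi_{B(0,R)}$ and let $R\to\infty$: if $|x|<R/2$ then $x-Ru,x-Rv\in B(0,R)$ for every $R\in O(d)$, so $\mathcal{T}_1(f,g)\equiv 1$ on $B(0,R/2)$, and comparing $\|\mathcal{T}_1(f,g)\|_r\gtrsim R^{d/r}$ with $\|f\|_p\|g\|_q\sim R^{d/p+d/q}$ forces $\frac1r\le\frac1p+\frac1q$. For the focusing inequality $\frac1p+\frac1q\le\frac dr$, I would take $f=g=\chi_{N_\delta(S^{d-1})}$, the $\delta$-neighborhood of the unit sphere, and let $\delta\to0$: if $|x|<\delta/10$ then $\big||x-Ru|-1\big|\le|x|<\delta$ (and likewise for $v$), so $\mathcal{T}_1(f,g)\equiv1$ on $B(0,\delta/10)$, giving $\delta^{d/r}\lesssim\|f\|_p\|g\|_q\sim\delta^{1/p+1/q}$ since $|N_\delta(S^{d-1})|\sim\delta$.

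The last two inequalities are interchanged by $p\leftrightarrow q$, so it suffices to obtain $\frac dp+\frac1q\le d-1+\frac1r$. Here I would take $f=\chi_{B(0,\delta)}$ and $g=\chi_{N_\delta(S^{d-1})}$ and let $\delta\to0$. For $x$ with $\big||x|-1\big|<\delta/4$, any rotation $R$ with $Ru\in B(x,\delta)$ satisfies $x-Rv-R(u-v)=x-Ru$, hence $\big||x-Rv|-1\big|=\big||x-Rv|-|R(u-v)|\big|\le|x-Ru|<\delta$, so $x-Rv\in N_\delta(S^{d-1})$ automatically; since the set of such $R$ has measure $\gtrsim\delta^{d-1}$, we get $\mathcal{T}_1(f,g)(x)\gtrsim\delta^{d-1}$ on an annulus of measure $\sim\delta$. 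This yields $\delta^{d-1+1/r}\lesssim\|f\|_p\|g\|_q\sim\delta^{d/p+1/q}$, i.e. $\frac dp+\frac1q\le d-1+\frac1r$, and exchanging the roles of $f$ and $g$ gives the fourth inequality.

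The routine part is computing the $L^p$ norms of the test functions; the points where I would spend the real care are the geometric bookkeeping — the submersion property of $R\mapsto Ru$ and the size $\sim\delta^{d-1}$ of the relevant $\delta$-caps — and, in the curvature example, the observation that the annular constraint on $x$ is inherited by $x-Rv$ for \emph{every} admissible $R$, which is what produces the full gain of $\delta^{d-1}$ rather than something smaller.
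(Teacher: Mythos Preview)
Your proposal is correct and follows essentially the same approach as the paper: the same three families of test functions (large balls, thin annuli around $S^{d-1}$, and a small ball paired with a thin annulus) and the same geometric observations, including the pushforward of Haar measure to surface measure and the triangle-inequality step showing $x-Rv$ lands in the annulus whenever $x-Ru$ is small. The only cosmetic issue is the notational collision in your first example, where $R$ denotes both the ball radius and a generic rotation in $O(d)$; rename one of them before writing it up.
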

\begin{rem}
    Observe that the necessary conditions above do not impose restrictions for H\"older bounds for $\mathcal{T}_1$, that is, for any $p,q\in [1,\infty]$ and $1/r=1/p+1/q$ the conditions above are trivially satisfied.
\end{rem}

\begin{proof}[Proof of Proposition \ref{necessarytriangle}]
    
We will give an example for each necessary condition to demonstrate them in order. First take $f_M=\chi_{B(0,2M)}$ where $M>1$. For all $|x|<M$, $\mathcal{T}_1(f_M,f_M)(x)= 1$, so
$$M^{d/r}\lesssim \|\mathcal{T}_1(f_M,f_M)\|_r\lesssim\|f_M\|_p \|f_M\|_q\lesssim M^{d(\frac{1}{p}+\frac{1}{q})}\,\text{ for all }M>1,$$
which implies $1/r\leq 1/p+1/q$.

Next take $f_{\delta}=\chi_{A_{\delta}}$ where $A_{\delta}=\{x\in \R^d\colon 1-\delta\leq|x|\leq 1+\delta\}$. Then for any $|x|<\delta$, 
\begin{equation}
    \begin{split}
        \mathcal{T}_1(f_{\delta}, f_{\delta})(x)=\int_{O(d)}\chi_{A_{\delta}}(x-Ru)\chi_{A_{\delta}}(x-Rv)d\mu(R)=\int_{O(d)}d\mu (R)=1
    \end{split}
\end{equation}
since $1-\delta<1-|x|\leq |x-Ru|\leq 1+|x|< 1+\delta$, for any $R\in O(d)$ since $|u|=1$.
    That gives
    $$\delta^{d/r}\lesssim \|\mathcal{T}_1(f_{\delta},f_{\delta})\|_r\lesssim\|f_{\delta}\|_p \|f_{\delta}\|_q=|A_{\delta}|^{\frac{1}{p}+\frac{1}{q}}\sim{\delta}^{\frac{1}{p}+\frac{1}{q}}\,\text{ for all }\delta\ll 1,$$
    so $1/p+1/q\leq d/r$.

Take $f_{\delta}=\chi_{B(0,\delta)}$ and $g_{\delta}=\chi_{A_{\delta}}$. Suppose $||x|-1|\leq \frac{1}{2}\delta$, then 
\begin{equation}
    \begin{split}
\mathcal{T}_1(f_{\delta},g_{\delta})(x)=&\int_{O(d)} \chi_{B(0,\delta)}(x-Ru)\chi_{A_{\delta}}(x-Rv)d\mu(R)\\
\geq& \int_{O(d)} \chi_{|x-Ru|< \delta} d\mu(R)
    \end{split}
\end{equation}
since for any $|x-Ru|<\delta$, one has $x-Rv\in A_{\delta}$. Therefore 
\begin{equation}
    \begin{split}
\mathcal{T}_1(f_{\delta},g_{\delta})(x)\geq \int_{O(d)} \chi_{B(x,\delta)}(Ru)d\mu(R)=\int_{S^{d-1}} \chi_{B(x,\delta)}(y)d\sigma(y)\gtrsim \delta^{d-1}
    \end{split}
\end{equation}
and from 
$$\delta^{d-1+1/r}\lesssim \|\mathcal{T}_1(f_{\delta},g_{\delta})\|_r\lesssim \|\chi_{B(0,\delta)}\|_p\|g_{\delta}\|_q=\delta^{d/p+1/q}$$
we get $d/p+1/q\leq d-1+1/r$. The last necessary condition follows from the symmetry $\mathcal{T}_1(f,g)=\mathcal{T}_1(g,f)$.
\end{proof}
    
\subsection{Bilinear multiplier boundedness criteria}

For a multiplier $m(\xi, \eta)$ in $\R^{2d}$ we denote by $T_m$ the bilinear operator given by 
$$T_m(f,g)(x)= \int_{\R^{2d}} \hat{f}(\xi)\hat{g}(\eta)m(\xi, \eta)e^{2\pi i x\cdot(\xi +\eta)}d\xi d\eta.$$
These will appear naturally throughout the paper as Fourier transforms of single-scale bilinear averaging operators.

\begin{thm}[\cite{GHS}]\label{ghscriteria}
Let $1\leq q<4$ and set $M_q=\left\lfloor \frac{2d}{4-q} \right\rfloor+1$. Let $m(\xi, \eta)$ be a function in $L^{q}(\R^{2d})\cap \mathcal{C}^{M_q}(\R^{2d})$ satisfying 
\begin{equation}
    \|\partial^{\alpha}m\|_{L^{\infty}}\leq C_0<\infty
\text{ for all multiindices }\alpha \text{ with }|\alpha|\leq M_q.
\end{equation}
Then there is a constant $A$ depending on $d$ and $q$ such that the associated operator $T_m$ with multiplier $m$ satisfies 
\begin{equation}
  \|T_m\|_{L^2\times L^2\rightarrow L^1}\leq A C_0^{1-\frac{q}{4}}\|m\|_{L^q}^{q/4}. 
\end{equation}

\end{thm}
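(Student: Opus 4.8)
\emph{Strategy.} Rescaling $m\mapsto m/C_0$ reduces everything to the case $C_0=1$, so the goal becomes: prove $\|T_m\|_{L^2\times L^2\to L^1}\lesssim\|m\|_{L^q}^{q/4}$ when $\|\partial^\alpha m\|_{L^\infty}\le 1$ for all $|\alpha|\le M_q$. The plan is (i) to pass to the kernel side and record a trivial (but generally infinite) bound; (ii) to cut $m$ into frequency–dyadic pieces and reduce to a per-piece estimate; (iii) to prove a gain on each piece by balancing the smoothness hypothesis against the $L^q$ hypothesis; (iv) to recombine by a H\"older argument in the dyadic index.

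\emph{Kernel bound and dyadic reduction.} Writing $K=\check m$ (inverse Fourier transform on $\R^{2d}$) one has $T_m(f,g)(x)=\int_{\R^{2d}}K(u,v)f(x-u)g(x-v)\,du\,dv$, so Minkowski's inequality together with Cauchy--Schwarz in $x$ gives $\|T_m(f,g)\|_{L^1}\le\|K\|_{L^1(\R^{2d})}\|f\|_{L^2}\|g\|_{L^2}$. This makes $T_m$ well defined on Schwartz data but is useless directly, since $\|K\|_{L^1}$ is typically infinite (the hypotheses allow $m$ to be, for instance, a smooth thin slab). So take a Littlewood--Paley partition $1=\sum_{j\ge0}\psi(2^{-j}\cdot)$ on $\R^{2d}$ with $\psi$ supported in an annulus, and put $m_j=m\,\psi(2^{-j}\cdot)$, so $m=\sum_{j}m_j$ with $m_j$ supported where $|w|\sim 2^j$. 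The isotropic dilation identity $T_{m_j}(f,g)(x)=T_{\widetilde m_j}\big(f(2^{-j}\cdot),g(2^{-j}\cdot)\big)(2^jx)$, with $\widetilde m_j(w)=m_j(2^jw)$, shows that the $L^2\times L^2\to L^1$ operator norm is scale–invariant: $\|T_{m_j}\|_{L^2\times L^2\to L^1}=\|T_{\widetilde m_j}\|_{L^2\times L^2\to L^1}$. By the Leibniz rule $\|\partial^\alpha m_j\|_{L^\infty}\lesssim1$ for $|\alpha|\le M_q$ uniformly in $j$, and $\sum_j\|m_j\|_{L^q}^q\lesssim\|m\|_{L^q}^q$. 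It therefore suffices to prove, for some $\eps>0$ independent of $j$,
\begin{equation}\label{eq:perblock}
\|T_{m_j}\|_{L^2\times L^2\to L^1}\lesssim 2^{-\eps j}\,\|m_j\|_{L^q}^{q/4},
\end{equation}
since then $\sum_j 2^{-\eps j}\|m_j\|_{L^q}^{q/4}\le\big(\sum_j 2^{-4\eps j/3}\big)^{3/4}\big(\sum_j\|m_j\|_{L^q}^q\big)^{1/4}\lesssim\|m\|_{L^q}^{q/4}$ by H\"older in $j$ with exponents $4/3$ and $4$.

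\emph{The per-block estimate.} This is the heart of the argument. Fix $j$ and write $n=m_j$: it is supported where $|w|\sim 2^j$ (so $|\supp n|\sim 2^{2jd}$) with $\|\partial^\alpha n\|_{L^\infty}\lesssim1$ for $|\alpha|\le M_q$. Two facts compete. First, smoothness gives kernel decay: integrating by parts $M_q$ times, $|\check n(x)|\lesssim 2^{2jd}(1+|x|)^{-M_q}$ on $\R^{2d}$ — this is summable only when $M_q>2d$, which is exactly why the crude kernel bound cannot be quoted. Second, integrability gives an $L^q$ gain: $\|\check n\|_{L^\infty}\le\|n\|_{L^1}\le 2^{2jd/q'}\|n\|_{L^q}$, and, more usefully, a Hausdorff--Young control of the coefficients of $n$ in a Fourier series on a cube of sidelength $\sim 2^j$ (one covers $\supp n$ by $O(1)$ such cubes, periodizes, and expands). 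The plan is to localize $T_n(f,g)$ in physical space on a lattice of cubes of sidelength $\sim 2^{-j}$ — the scale on which the kernel actually lives — exploit on each cube the bilinear almost-orthogonality coming from the near-disjointness of the frequency supports of the localized pieces (the Fourier support of the piece indexed by a frequency cube around $(\xi_0,\eta_0)$ lies in a bounded neighbourhood of $\xi_0+\eta_0$), and then, dyadic shell by dyadic shell, interpolate the decay estimate against the integrability estimate rather than use either one alone. Optimizing the crossover between the two regimes yields, after undoing the rescaling of step (ii), precisely the right-hand side of \eqref{eq:perblock}. The arithmetic role of $M_q=\lfloor 2d/(4-q)\rfloor+1$ is that it is the smallest integer for which the smoothness gain just outweighs the $2^{2jd/q'}$ loss from the support size in the relevant range, so the balance point of the two estimates falls exactly on $\|n\|_{L^q}^{q/4}$.

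\emph{Main obstacle.} The difficulty is concentrated entirely in the per-block estimate \eqref{eq:perblock}. The two naive bounds are each inadmissible: the kernel-$L^1$ bound is infinite once $M_q\le 2d$, and the bilinear triangle inequality over the Fourier coefficients discards all the cancellation and is far too lossy. One must squeeze decay out of the smoothness hypothesis and summability out of the $L^q$ hypothesis \emph{simultaneously}, in the proportion fixed by $M_q$, so that the exponents only collapse to the geometric mean $C_0^{1-q/4}\|m\|_{L^q}^{q/4}$ after the dyadic pieces are put back together. Carrying the $2^{2jd}$ support factors through the scale-by-scale interpolation on each block, and then through the final H\"older sum in $j$, is where all the care is needed; none of the individual steps is deep, but the bookkeeping is delicate.
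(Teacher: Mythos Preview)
The statement you are trying to prove is not proved in this paper at all: it is quoted from \cite{GHS} and used as a black box (see Section~\ref{preliminaries}). There is therefore no ``paper's own proof'' to compare your proposal against.

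Regarding the proposal itself, it is a strategy outline rather than a proof, and the step on which everything hinges is not actually carried out. The paragraph ``The per-block estimate'' announces a plan---localize in physical space at scale $2^{-j}$, use bilinear almost-orthogonality from the near-disjointness of $\xi_0+\eta_0$, interpolate kernel decay against Hausdorff--Young on each shell, optimize---but no inequality is written down that a reader could check, and the claimed output \eqref{eq:perblock} never appears as the conclusion of a computation. In particular, two points are left unexplained: (i) why one should expect an \emph{additional} geometric factor $2^{-\eps j}$ on top of $\|m_j\|_{L^q}^{q/4}$ (your H\"older-in-$j$ recombination needs this, but nothing in the sketch produces it), and (ii) precisely how the specific integer $M_q=\lfloor 2d/(4-q)\rfloor+1$ makes the ``crossover'' land on the exponent $q/4$. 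You acknowledge in the ``Main obstacle'' paragraph that this is where the difficulty lies and that the bookkeeping is delicate, but that is a description of the problem, not a solution to it.

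If you want to turn this into a genuine proof, you will need to replace the narrative in the per-block step with an explicit decomposition and an explicit chain of inequalities showing how the derivative bounds (up to order $M_q$) and the $L^q$ size combine to give the stated estimate, including the $\eps$-gain in $j$; alternatively, consult \cite{GHS} directly for their argument.
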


As an immediate corollary of the above theorem, one has the following. 
\begin{cor}\label{corl2l2l1}
Let $m(\xi, \eta )$ be a smooth compactly supported function in $\R^{2d}$ such that 
$$\|\partial^{\alpha}m\|_{L^{\infty}}\leq C_0\,\text{ for all multi-indices } \alpha, \text{ with } |\alpha|\leq \left\lfloor \frac{2d}{3}\right\rfloor+1.$$
Then there exists a constant $A$ depending on $d$ such that the associated operator $T_m$ with multiplier $m$ satisfies
$$\|T_m\|_{L^{2}\times L^{2}\rightarrow L^{1}}\leq A C_0|\text{supp}(m)|^{1/4}.$$

\end{cor}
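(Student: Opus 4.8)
The plan is to deduce Corollary \ref{corl2l2l1} directly from Theorem \ref{ghscriteria} by specializing to the endpoint integrability exponent $q=1$. With this choice one has $M_q=\left\lfloor\frac{2d}{4-1}\right\rfloor+1=\left\lfloor\frac{2d}{3}\right\rfloor+1$, which is precisely the number of derivatives of $m$ that is controlled in the hypothesis of the corollary; thus the regularity assumption $\|\partial^{\alpha}m\|_{L^{\infty}}\leq C_0$ for $|\alpha|\leq\left\lfloor\frac{2d}{3}\right\rfloor+1$ is exactly what Theorem \ref{ghscriteria} requires, with the same constant $C_0$. The only additional thing to check is the $L^1$ membership of $m$, together with a bound on its $L^1$ norm.

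Since $m$ is smooth and compactly supported, it certainly lies in $L^1(\R^{2d})\cap\mathcal{C}^{M_1}(\R^{2d})$. Moreover, bounding $m$ pointwise on its support by its sup norm, which is at most $C_0$ by the $|\alpha|=0$ case of the hypothesis, gives
\[
\|m\|_{L^1(\R^{2d})}=\int_{\text{supp}(m)}|m(\xi,\eta)|\,d\xi\,d\eta\leq C_0\,|\text{supp}(m)|.
\]
Plugging $q=1$ into the conclusion of Theorem \ref{ghscriteria} then produces a constant $A=A(d)$ with
\[
\|T_m\|_{L^2\times L^2\rightarrow L^1}\leq A\,C_0^{1-\frac14}\,\|m\|_{L^1}^{1/4}\leq A\,C_0^{3/4}\big(C_0\,|\text{supp}(m)|\big)^{1/4}=A\,C_0\,|\text{supp}(m)|^{1/4},
\]
which is exactly the claimed estimate.

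There is essentially no obstacle here; the proof is a one-line specialization. The only mild points to record are the arithmetic check that $\left\lfloor\frac{2d}{4-q}\right\rfloor+1$ at $q=1$ equals the stated derivative count $\left\lfloor\frac{2d}{3}\right\rfloor+1$, and the observation that the single constant $C_0$ plays two roles simultaneously: the uniform derivative bound demanded by Theorem \ref{ghscriteria}, and, via the $|\alpha|=0$ instance, the pointwise bound used to convert $\|m\|_{L^1}$ into $C_0\,|\text{supp}(m)|$. The choice $q=1$ is the natural one, as it both minimizes the number of derivatives required among admissible exponents $q\in[1,4)$ and yields the clean exponent $1/4$ on $|\text{supp}(m)|$.
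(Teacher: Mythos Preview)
Your proof is correct and is precisely the intended argument: the paper states this result as ``an immediate corollary'' of Theorem~\ref{ghscriteria} without writing out the details, and your specialization to $q=1$ (yielding $M_1=\lfloor 2d/3\rfloor+1$ and $\|m\|_{L^1}\le C_0|\text{supp}(m)|$) is exactly how one fills them in.
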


\section{Decomposition of the operator and bounds for the pieces in the H\"older range}\label{sectiondecayforpieces}

We start by describing the decomposition used in \cite{HHY}. Choose a radial function $\varphi\in \mathcal{S}(\R^d)$ such that $\hat{\varphi}$ is nonnegative, radially decreasing and
\begin{equation}\label{phi def}
\hat{\varphi}(\xi)= \begin{cases}
   1, \text{ if }|\xi|\leq 1\\
   0, \text{ if }|\xi|\geq 2
    \end{cases}.
\end{equation}

Let $\hat{\psi}(\xi)=\hat{\varphi}(\xi)-\hat{\varphi}(2\xi)$, which is supported in $\{1/2<|\xi|<2\}$. Then
\begin{equation}\label{psi def}
\hat{\varphi}(\xi)+\sum_{j=1}^{\infty} \hat{\psi}(2^{-j}\xi)\equiv 1.
\end{equation}

Define for all $i,j\geq 1$.
\begin{equation}
\mathcal{A}_1^{i,j}(f,g)(x)=\int_{\R^{2d}}\hat{f}(\xi)\hat{g}(\eta)\hat{\sigma}(\xi, \eta)\hat{\psi}(2^{-i}\xi)\hat{\psi}(2^{-j}\eta)e^{2\pi i x\cdot (\xi+ \eta)}d\xi d\eta.
\end{equation}

If $i=0$, replace $\hat{\psi}(2^{-i}\xi)$ by $\hat{\varphi}(\xi)$ in the expression above, and similarly if $j=0$. Then one has 

$$\mathcal{A}_1(f,g)(x)=\sum_{i,j\geq 0} \mathcal{A}_1^{i,j}(f,g)(x).$$

\begin{prop}\label{nodecay} Let $p,q\in [1,\infty]$ and $r\in (0,\infty]$, such that $\frac{1}{r}=\frac{1}{p}+\frac{1}{q}$. Then there exists $C>0$ such that for all $i,j\geq 0$
\begin{equation}
    \|\mathcal{A}^{i,j}_1(f,g)\|_{r}\leq C\|f\|_p\|g\|_q.
\end{equation}
\end{prop}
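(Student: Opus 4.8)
The plan is to recognize that each piece $\mathcal{A}_1^{i,j}$ is simply the single scale bilinear average $\mathcal{A}_1$ precomposed with a Littlewood--Paley projection in each variable, so that the desired \emph{uniform} (decay-free) bound drops out of the H\"older bounds for $\mathcal{A}_1$ in Corollary \ref{holderboundsforA1} combined with the uniform $L^s$-boundedness of those projections. So the first step is to set $P_i$ to be the Fourier multiplier operator with symbol $\hat\psi(2^{-i}\xi)$ when $i\ge 1$ and with symbol $\hat\varphi(\xi)$ when $i=0$, and to note from the bilinear-multiplier form
$$\mathcal{A}_1(f,g)(x)=\int_{\R^{2d}}\hat f(\xi)\hat g(\eta)\hat\sigma(\xi,\eta)e^{2\pi i x\cdot(\xi+\eta)}\,d\xi\,d\eta$$
that one has the exact identity $\mathcal{A}_1^{i,j}(f,g)=\mathcal{A}_1(P_i f,P_j g)$ for all $i,j\ge 0$.

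The second step is to record that $P_i$ is convolution with $2^{id}\psi(2^i\cdot)$ for $i\ge 1$ and with $\varphi$ for $i=0$, and since $\psi,\varphi\in\mathcal{S}(\R^d)\subset L^1(\R^d)$ with $\|2^{id}\psi(2^i\cdot)\|_1=\|\psi\|_1$ independent of $i$, Young's inequality yields a constant $C_0=\max(\|\psi\|_1,\|\varphi\|_1)$ with $\|P_i h\|_s\le C_0\|h\|_s$ for every $1\le s\le\infty$ and every $i\ge 0$. The third step is then just to chain these: for $p,q\in[1,\infty]$ and $r\in(0,\infty]$ with $1/r=1/p+1/q$, Corollary \ref{holderboundsforA1} gives
$$\|\mathcal{A}_1^{i,j}(f,g)\|_r=\|\mathcal{A}_1(P_i f,P_j g)\|_r\lesssim\|P_i f\|_p\|P_j g\|_q\le C_0^2\,\|f\|_p\|g\|_q,$$
where the implied constant is the (fixed) operator norm of $\mathcal{A}_1$ and hence independent of $i,j$. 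Because no summation over $i,j$ is involved, the quasi-Banach range $r<1$ causes no trouble.

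There is not really a serious obstacle in this statement — it is exactly the ``trivial'' bound that holds with no gain in $i,j$. The only two points needing any care are (a) the identity $\mathcal{A}_1^{i,j}(f,g)=\mathcal{A}_1(P_i f,P_j g)$, which is immediate once $\mathcal{A}_1$ is viewed as a bilinear Fourier multiplier, and (b) the uniform boundedness of $P_i$ at the endpoints $s=1,\infty$, which is why I invoke Young's inequality for convolutions rather than the classical $L^p$, $1<p<\infty$, Littlewood--Paley estimates. It is worth flagging that precisely because this bound loses nothing in $(i,j)$ it is too weak to control the lacunary maximal operator, so the substantive work in the later sections will be to replace it, for the high-high pieces, by estimates that decay in $(i,j)$.
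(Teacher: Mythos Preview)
Your proof is correct and follows essentially the same approach as the paper: identify $\mathcal{A}_1^{i,j}(f,g)=\mathcal{A}_1(\psi_i*f,\psi_j*g)$ (with $\varphi$ in place of $\psi_i$ when $i=0$), apply Corollary \ref{holderboundsforA1}, and then use Young's inequality together with $\|\psi_i\|_1=\|\psi\|_1$ to get the uniform bound. Your write-up is in fact a bit more careful in explicitly handling the endpoint $s=1,\infty$ via Young rather than appealing to any Littlewood--Paley theory.
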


\begin{proof} Denote by $\psi_{i}(x)=2^{id}\psi(2^{i}x)$. Assume $i,j\geq 1$. One can easily see that 
\begin{equation}
    \|\mathcal{A}^{i,j}_1(f,g)\|_{r}=\|\mathcal{A}_1(\psi_i*f,\psi_j*g)\|_{r}.
\end{equation}

Now using Corollary \ref{holderboundsforA1}, 
\begin{equation}
\begin{split}
    \|\mathcal{A}_1(\psi_i*f,\psi_j*g)\|_{r}\leq &C \|\psi_i*f\|_p \|\psi_j*g\|_q\\
    \leq &C \|f\|_p\|g\|_q
\end{split}
\end{equation}
where we used Young's inequality and the fact that $\|\psi_i\|_1=\|\psi\|_1\lesssim 1$. The case where $i=0$ or $j=0$ follows similarly.
\end{proof}

\begin{prop}\label{keyestimate} Let $d\geq 2$. Then for every $i,j\geq 1$
\begin{equation}
    \|\mathcal{A}_1^{i,j}(f,g)\|_1\lesssim 2^{-(i+j)\frac{d-1}{2}}\|f\|_2\|g\|_2.
\end{equation}
\end{prop}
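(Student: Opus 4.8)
The plan is to write $\mathcal{A}_1^{i,j}=T_{m_{i,j}}$ where $m_{i,j}(\xi,\eta)=\hat{\sigma}(\xi,\eta)\hat{\psi}(2^{-i}\xi)\hat{\psi}(2^{-j}\eta)$, and to apply Corollary \ref{corl2l2l1}. The multiplier $m_{i,j}$ is smooth and compactly supported, so the corollary will give $\|\mathcal{A}_1^{i,j}\|_{L^2\times L^2\to L^1}\lesssim C_0 |\supp(m_{i,j})|^{1/4}$, where $C_0$ controls finitely many derivatives of $m_{i,j}$ in $L^\infty$. Thus the whole estimate reduces to two ingredients: (i) the support of $m_{i,j}$ has measure $\lesssim 2^{id}2^{jd}$, which is immediate since $\hat\psi(2^{-i}\cdot)$ is supported in $\{2^{i-1}<|\xi|<2^{i+1}\}$, contributing $\sim 2^{id}$ to the volume in $\xi$ and similarly $\sim 2^{jd}$ in $\eta$; and (ii) the $L^\infty$ bound on derivatives of $m_{i,j}$ up to order $N=\lfloor 2d/3\rfloor+1$ must decay like $2^{-i\frac{d-1}{2}}2^{-j\frac{d-1}{2}}2^{-i N}2^{-jN}$ or better — i.e., we need enough decay to beat both the $2^{N i}2^{N j}$ loss from differentiating the cutoffs and leave the claimed $2^{-(i+j)(d-1)/2}$ after taking a fourth root of the volume $2^{(i+j)d}$.

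Concretely, by the classical stationary phase / Bessel asymptotics for the Fourier transform of the surface measure on $S^{2d-1}$, one has $|\partial^\alpha \hat\sigma(\zeta)|\lesssim_\alpha (1+|\zeta|)^{-(2d-1)/2}$ for all $\zeta\in\R^{2d}$ and all multi-indices $\alpha$ (each derivative does not worsen the decay rate, only the constant). On the support of $\hat\psi(2^{-i}\xi)\hat\psi(2^{-j}\eta)$ we have $|(\xi,\eta)|\sim 2^i+2^j \geq \max(2^i,2^j)$, so $|\partial^\alpha\hat\sigma(\xi,\eta)|\lesssim 2^{-\max(i,j)(2d-1)/2} \lesssim 2^{-(i+j)(2d-1)/4}$ (the last step using $\max(i,j)\geq (i+j)/2$). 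The cutoff factors satisfy $|\partial^\beta[\hat\psi(2^{-i}\xi)]|\lesssim 2^{-i|\beta|}\lesssim 1$ since $i\geq 1$, so by the Leibniz rule $\|\partial^\alpha m_{i,j}\|_{L^\infty}\lesssim 2^{-(i+j)(2d-1)/4}=:C_0$ for $|\alpha|\leq N$, with constants uniform in $i,j$. Then Corollary \ref{corl2l2l1} yields
\begin{equation*}
\|\mathcal{A}_1^{i,j}(f,g)\|_1 \lesssim C_0\,|\supp(m_{i,j})|^{1/4}\lesssim 2^{-(i+j)(2d-1)/4}\cdot 2^{(i+j)d/4} = 2^{-(i+j)/4}\|f\|_2\|g\|_2,
\end{equation*}
which is weaker than the claimed $2^{-(i+j)(d-1)/2}$. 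So the naive application of the $L^2\times L^2\to L^1$ criterion is lossy, and the real argument must be more careful.

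The fix — and the main obstacle — is to not throw away the localization when estimating $\hat\sigma$: instead of bounding $m_{i,j}$ in $L^\infty$ by its sup, one should use the slicing/coarea identity from \cite{JL} combined with polar coordinates (as the introduction promises for Section \ref{sectiondecayforpieces}) to write $\mathcal{A}_1^{i,j}$ more explicitly. Writing $\mathcal{A}_1^{i,j}(f,g)(x)=\int_{B^d(0,1)}(\psi_i*f)(x-y)\,[\widetilde{A}_{\sqrt{1-|y|^2}}(\psi_j*g)](x)(1-|y|^2)^{(d-2)/2}\,dy$, where $\widetilde{A}_s$ is a frequency-localized spherical average at scale $s$, one applies Plancherel in the $g$-variable: the relevant linear multiplier is $\widehat{d\sigma_{d-1}}(s\eta)\hat\psi(2^{-j}\eta)$, which on the support $|\eta|\sim 2^j$ has size $\lesssim (s 2^j)^{-(d-1)/2}$ when $s2^j\gtrsim 1$, i.e.\ size $\lesssim 2^{-j(d-1)/2}$ after integrating out the favorable region; one has to handle the region $s=\sqrt{1-|y|^2}\lesssim 2^{-j}$ (near $|y|=1$) separately, where the Jacobian factor $(1-|y|^2)^{(d-2)/2}$ and the small measure of that shell save the day. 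A parallel argument in the $f$-variable, using that $\psi_i*f$ is frequency-localized to $|\xi|\sim 2^i$ and that the surface measure on $S^{2d-1}$ restricted to $\{|y|<1\}$ again produces a $2^{-i(d-1)/2}$ gain via the curvature of the $y$-sphere of radius... — in short, one exploits the curvature in \emph{both} the $y$ and $z$ slices independently, each giving $2^{-i(d-1)/2}$ and $2^{-j(d-1)/2}$ respectively, and multiplies. I expect the bookkeeping near the degenerate boundary $|y|\to 1$ (where the inner sphere degenerates to radius $0$) to be the technical heart of the argument, and the product structure $2^{-i(d-1)/2}\cdot 2^{-j(d-1)/2}$ — rather than the symmetric-in-$(i+j)$ bound that a crude $\max(i,j)$ estimate gives — to require genuinely using the two localizations at once rather than one at a time.
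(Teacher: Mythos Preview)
Your second approach is the right one and matches the paper's argument, but your execution is muddled in two places. First, you write the slicing formula as an integral over $B^d(0,1)$ and then struggle to extract decay on the $f$ side; the clean move is to pass to polar coordinates in $y$ \emph{immediately}, which turns the ball integral into
\[
\mathcal{A}_1(f,g)(x)=\int_0^1 \lambda^{d-1}(1-\lambda^2)^{\frac{d-2}{2}}\,A_\lambda(f)(x)\,A_{\sqrt{1-\lambda^2}}(g)(x)\,d\lambda,
\]
where $A_s$ is the \emph{linear} spherical average in $\R^d$. Now both factors are spherical averages, the structure is symmetric in $f$ and $g$, and there is no need to invoke ``the curvature of the $y$-sphere'' separately: after Minkowski and H\"older, Plancherel gives $\|A_\lambda(f^i)\|_2=\|\hat f\,\hat\psi(2^{-i}\cdot)\,\hat\sigma(\lambda\cdot)\|_2\lesssim (\lambda 2^i)^{-(d-1)/2}\|f\|_2$ and likewise $\|A_{\sqrt{1-\lambda^2}}(g^j)\|_2\lesssim ((1-\lambda^2)^{1/2}2^j)^{-(d-1)/2}\|g\|_2$.

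Second, your anticipated ``technical heart'' near $|y|\to 1$ does not exist. The bound $|\hat\sigma(\zeta)|\lesssim (1+|\zeta|)^{-(d-1)/2}\le |\zeta|^{-(d-1)/2}$ is valid for \emph{all} $\zeta\neq 0$, so the Plancherel estimate $\|A_s(g^j)\|_2\lesssim (s2^j)^{-(d-1)/2}\|g\|_2$ holds for every $s>0$ with no case split at $s\sim 2^{-j}$; when $s2^j\lesssim 1$ the bound is simply weaker than the trivial one. Plugging both estimates in leaves the scalar integral
\[
\int_0^1 \lambda^{d-1}(1-\lambda^2)^{\frac{d-2}{2}}\,\lambda^{-\frac{d-1}{2}}(1-\lambda^2)^{-\frac{d-1}{4}}\,d\lambda
=\int_0^1 \lambda^{\frac{d-1}{2}}(1-\lambda^2)^{\frac{d-3}{4}}\,d\lambda,
\]
which is finite for $d\ge 2$ since $\frac{d-3}{4}>-1$. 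That is the whole proof; there is no boundary bookkeeping and no region to handle separately.
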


\begin{rem}
The attentive reader might notice that the exact value of the negative power of $2^{i+j}$ that we get in the proposition above is not very important to run the proof of Theorem \ref{boundsspherical} later, so one could also use the estimates for the pieces of $\tilde{\mathcal{M}}$ from \cite{HHY}, or the bilinear multiplier theorem in \cite{GHS} to get the proposition above with a worse negative power. One advantage of the proof below is that it reduces things to spherical averages in $\R^d$ and it can be adapted to take care of more general averaging operators, like the bilinear averages $\mathcal{A}_1^{\mathbf{a}}$ for which we will prove decay bounds in Proposition \ref{decayboundsforleeshuin}. Moreover, having a better decay power in this proposition is interesting on its own, and might have other applications in the study of the sharp $L^p$ improving properties of $\mathcal{A}_1$ for example.
\end{rem}
\begin{proof}
    
Let us use the notation $D_t(f)(x)=f(tx)$. Observe that by using the slicing formula \eqref{slicing formula} followed by polar coordinates in the ball $B^{d}(0,1)$ one gets that
\begin{equation*}
    \begin{split}
        \mathcal{A}_1(f,g)(x)=&\int_{B^{d}(0,1)} f(x-y) \int_{S^{d-1}}g(x-\sqrt{1-|y|^2}z)d\sigma(z) (1-|y|^2)^{\frac{d-2}{2}}dy\\
        =&\int_{0}^{1} \lambda^{d-1}(1-\lambda^2)^{\frac{d-2}{2}}\left\{\int_{S^{d-1}} f(x-\lambda w)d\sigma(w)\right\}\left\{\int_{S^{d-1}}g(x-\sqrt{1-\lambda^2}z)d\sigma(z)\right\} d\lambda\\
    =&\int_{0}^{1}\lambda^{d-1}(1-\lambda^2)^{\frac{d-2}{2}}A_{\lambda}(f)(x)A_{\sqrt{1-\lambda^2}}(g)(x)d\lambda.
    \end{split}
\end{equation*}

Denote $\langle \lambda\rangle=\sqrt{1-\lambda^2} $, and $f^{i}=f*\psi_i$, where $\psi_i$ is like in Proposition \ref{nodecay}. Then 
$$\mathcal{A}_1^{i,j}(f,g)(x)=\mathcal{A}_1(f^{i},g^{j})(x)=\int_{0}^{1}\lambda^{d-1}\langle\lambda\rangle^{d-2}A_{\lambda}(f^{i})(x)A_{\langle\lambda\rangle}(g^{j})(x)d\lambda.$$

Using Minkowski's inequality for integrals followed by H\"older's inequality gives
\begin{equation}
    \begin{split}
\|\mathcal{A}_1(f^{i},g^{j})\|_1\leq&\int_{0}^{1} \lambda^{d-1}\langle\lambda\rangle^{d-2} \|A_{\lambda}(f^{i})\|_2\|A_{\langle\lambda\rangle}(g^{j})\|_2 \,d\lambda.
    \end{split}
\end{equation}

For any $\lambda>0$, 
\begin{equation}\label{linearpiecedecay}
    \begin{split}
        \|A_{\lambda}(f^{i})\|_2=&\|f*\psi_i*d\sigma_{\lambda}\|_2\\
        =&\|\hat{f}(\xi)\hat{\psi}(2^{-i}\xi)\hat{\sigma}(\lambda\xi)\|_2\\
        \lesssim & (\lambda 2^{i})^{-\frac{d-1}{2}}\|f\|_2.
    \end{split}
\end{equation}

Therefore
\begin{equation}
    \begin{split}
\|\mathcal{A}_1(f^{i},g^{j})\|_1\lesssim & 2^{-(i+j)\frac{d-1}{2}}\|f\|_2\|g\|_2\int_{0}^{1} \lambda^{d-1}\langle\lambda\rangle^{d-2} \lambda^{-\frac{d-1}{2}} \langle\lambda\rangle^{-\frac{d-1}{2}}d\lambda\\
=& 2^{-(i+j)\frac{d-1}{2}}\|f\|_2\|g\|_2\int_{0}^{1} \lambda^{\frac{d-1}{2}}\langle\lambda\rangle^{\frac{d-3}{2}}  d\lambda\lesssim 2^{-(i+j)\frac{d-1}{2}}\|f\|_2\|g\|_2.
    \end{split}
\end{equation}
\end{proof}

The following corollary will have a key role in the proof of the bounds for $\mathcal{M}_{lac}$.

\begin{cor}\label{decayforpieces}
    Assume $d\geq 2$. Let $p,q\in (1,\infty)$ and $r\in (0,\infty)$ given by $\frac{1}{r}=\frac{1}{p}+\frac{1}{q}$. Then there exists $\delta(p,q,r,d)$ such that 
    \begin{equation}\label{decayestimate}
        \|\mathcal{A}_{1}^{i,j}(f,g)\|_{r}\lesssim 2^{-(i+j)\delta}\|f\|_p\|g\|_q. 
    \end{equation}

    Moreover, if $d\geq 5$ then for all $q\in (1,\infty)$ and $1/r=1+1/q$,
    \begin{equation}\label{dg5}
    \begin{split}
        \|\mathcal{A}_{1}^{i,j}(f,g)\|_{r}\lesssim  2^{-(i+j)\delta}\|f\|_1\|g\|_q. \\
        \|\mathcal{A}_{1}^{i,j}(f,g)\|_{r}\lesssim  2^{-(i+j)\delta}\|f\|_q\|g\|_1. 
        \end{split}
    \end{equation}
    
\end{cor}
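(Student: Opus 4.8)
The plan is to interpolate the single estimate from Proposition \ref{keyestimate}, which gives exponential decay $2^{-(i+j)(d-1)/2}$ at the symmetric $L^2\times L^2\to L^1$ point, against the uniform (no-decay) bounds from Proposition \ref{nodecay}, which hold at \emph{all} H\"older points $(1/p,1/q)$ with $p,q\in[1,\infty]$. The key observation is that bilinear complex interpolation (in the form of the Stein--Weiss / multilinear interpolation theorem adapted to the quasi-Banach target, as $r$ may be less than $1$) between a point with a geometric gain and a point with no gain produces a geometric gain at every intermediate point, with a rate $\delta$ that is a convex combination of $(d-1)/2$ and $0$, hence strictly positive in the open segment. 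Since $(1/2,1/2)$ lies in the interior of the square $(1,\infty)^2$ (i.e. strictly inside the region $\{0<1/p,1/q<1\}$), for any target $(1/p,1/q)\in(0,1)^2$ one can find a second H\"older point $(1/p_0,1/q_0)\in[0,1]^2$ such that $(1/p,1/q)$ is a proper convex combination of $(1/2,1/2)$ and $(1/p_0,1/q_0)$ --- for instance take $(1/p_0,1/q_0)$ on the boundary of the square along the ray from $(1/2,1/2)$ through $(1/p,1/q)$. Interpolating Proposition \ref{keyestimate} with Proposition \ref{nodecay} at that boundary point yields \eqref{decayestimate} with $\delta = \theta\,(d-1)/2 > 0$, where $\theta\in(0,1)$ is the interpolation parameter.

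First I would record the precise interpolation statement I need: if $T$ is a bilinear operator with $\|T\|_{L^{p_0}\times L^{q_0}\to L^{r_0}}\le A_0$ and $\|T\|_{L^{p_1}\times L^{q_1}\to L^{r_1}}\le A_1$, and $1/p_\theta=(1-\theta)/p_0+\theta/p_1$ etc., then $\|T\|_{L^{p_\theta}\times L^{q_\theta}\to L^{r_\theta}}\lesssim A_0^{1-\theta}A_1^\theta$. This is standard in the Banach range; in the quasi-Banach range $r_\theta<1$ one invokes the appropriate version (e.g. the multilinear interpolation of Grafakos--Tao, or one works with the linearized operator $f\mapsto T(f,g)$ for fixed $g$ and then iterates, using that $\|\cdot\|_r^r$ is subadditive — the excerpt already flags that $\|\cdot\|_p^p$ satisfies the triangle inequality for $p<1$, so this is available). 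Applying this with $(p_1,q_1,r_1)=(2,2,1)$, $A_1 \sim 2^{-(i+j)(d-1)/2}$ and $(p_0,q_0,r_0)$ the chosen boundary point, $A_0\sim 1$, gives the result.

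For the second part \eqref{dg5}, the strategy is the same but now one needs a starting point of the form $L^1\times L^q$. Here I would first upgrade Proposition \ref{keyestimate} (or re-derive in its spirit) to an estimate with decay at a point with first exponent $1$: using the slicing identity $\mathcal{A}_1(f,g)=\int_0^1 \lambda^{d-1}\langle\lambda\rangle^{d-2}A_\lambda(f)A_{\langle\lambda\rangle}(g)\,d\lambda$, one can bound $\|\mathcal{A}_1^{i,j}(f,g)\|_r$ by $\int_0^1 \lambda^{d-1}\langle\lambda\rangle^{d-2}\|A_\lambda(f^i)\|_{p_0}\|A_{\langle\lambda\rangle}(g^j)\|_{q_0}\,d\lambda$ for suitable $p_0,q_0$ (Minkowski then H\"older), and then use $L^{p_0}$-decay estimates for the frequency-localized linear spherical average $A_\lambda(f^i)$: the relevant bound $\|A_\lambda(f^i)\|_{p_0}\lesssim (\lambda 2^i)^{-\kappa(p_0)}\|f\|_{p_0}$ with $\kappa(p_0)>0$ follows from the known decay of the single-scale linear spherical means on $L^{p_0}$ (Stein's estimates / the Fourier decay $|\hat\sigma(\xi)|\lesssim|\xi|^{-(d-1)/2}$ interpolated with $L^1\to L^1$ boundedness). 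Choosing $p_0$ close to $1$ and $q_0$ appropriately in the range where $\kappa$ stays positive, and checking the $\lambda$-integral converges (the weight $\lambda^{d-1}\langle\lambda\rangle^{d-2}$ has plenty of room, and $d\ge 5$ is exactly what guarantees the combined exponent of $\lambda$ near $0$ and of $\langle\lambda\rangle$ near $1$ stays integrable after subtracting the decay powers), one gets a decaying estimate at a near-$L^1\times L^q$ point; interpolating this with Proposition \ref{nodecay} at $(1,\infty)$ or with the $L^1\times L^1\to L^{1/2}$ bound \eqref{boundA1} then yields \eqref{dg5} on the full half-open segment.

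The main obstacle I anticipate is not the interpolation itself — that is routine once the endpoints are in hand — but rather making the quasi-Banach interpolation ($r<1$) fully rigorous, and, for \eqref{dg5}, pinning down the exact $L^{p_0}$-decay rate $\kappa(p_0)$ of the frequency-localized linear spherical average for $p_0$ near $1$ so that the $\lambda$-integral converges and the surviving exponent $\delta$ is genuinely positive; this is where the hypothesis $d\ge 5$ enters, and getting the bookkeeping of powers of $\lambda$ and $\langle\lambda\rangle$ right near the endpoints $\lambda=0$ and $\lambda=1$ is the delicate point. A cleaner alternative for \eqref{dg5}, which I would also consider, is to avoid linear $L^{p_0}$ decay entirely and instead interpolate a vector-valued / mixed-norm strengthening of Proposition \ref{keyestimate} against Proposition \ref{nodecay}, but the direct approach above is more transparent and reuses the machinery already set up in this section.
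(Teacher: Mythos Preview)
Your treatment of the first estimate \eqref{decayestimate} is correct and matches the paper exactly: interpolate Proposition~\ref{keyestimate} (decay at $(2,2,1)$) against Proposition~\ref{nodecay} (uniform bounds at all H\"older points). The geometric description you give of how to find the interpolation endpoint is fine, and the quasi-Banach issue is standard.

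For the second part \eqref{dg5}, however, your plan has a genuine gap. You propose to obtain decay at a point $(1/p_0,1/q_0)$ with $p_0>1$ close to $1$ via the slicing identity, and then interpolate with a no-decay bound at a point on the edge $1/p=1$ to reach the whole segment $\{(1,1/q):q\in(1,\infty)\}$. But interpolation between an interior point with decay and a boundary point with \emph{no} decay can only produce decay strictly in the open segment between them; at the boundary endpoint the interpolation parameter is $\theta=1$ and the resulting exponent is $0$. Since every convex combination of $(1/p_0,1/q_0)$ with $1/p_0<1$ and any admissible $(1/p',1/q')$ with $1/p'\le 1$ has first coordinate strictly less than $1$ unless all the weight is on the second point, you never reach $1/p=1$ with a positive $\delta$. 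The slicing route itself cannot give decay in $i$ at $p_0=1$: the linear bound $\|A_\lambda(f^i)\|_1\le\|\psi_i*d\sigma_\lambda\|_1\|f\|_1$ carries no gain, since $\|\psi_i*d\sigma_\lambda\|_1\sim 1$. Your diagnosis of where $d\ge 5$ enters (integrability of the $\lambda$-integral) is also off --- that integral converges already for $d\ge 2$.

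The paper proceeds differently here: it imports from \cite{HHY} a decay estimate for the localized maximal pieces $\tilde{\mathcal M}^{i,j}$ which holds \emph{directly at the boundary}, namely
\[
\|\tilde{\mathcal M}^{i,j}(f,g)\|_{2/3}\lesssim \bigl(\max\{2^i,2^j\}\bigr)^{-\frac{d-4}{2}}\|f\|_{p_1}\|g\|_{p_2}
\]
for all $1\le p_1,p_2\le 2$ with $1/p_1+1/p_2=3/2$, in particular at $(p_1,p_2)=(1,2)$ and $(2,1)$. Since $\mathcal A_1^{i,j}$ is pointwise dominated by $\tilde{\mathcal M}^{i,j}$ and $\max\{i,j\}\ge(i+j)/2$, this gives decay $2^{-(i+j)(d-4)/4}$ at $(1,2,2/3)$ and $(2,1,2/3)$, and \emph{this} is the origin of the restriction $d\ge 5$. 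Interpolating that boundary decay point with Proposition~\ref{nodecay} then covers the full segment in \eqref{dg5}.
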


\begin{proof}
    Estimate (\ref{decayestimate}) follows immediately from interpolation of Propositions \ref{nodecay} and \ref{keyestimate}.

    To prove the last part of this corollary, we recall that it was shown in \cite{HHY} that for 
$$\tilde{\mathcal{M}}^{ij}(f,g)(x)=\sup_{t\in[1,2]}|\int_{\R^{2d}} \hat{f}(\xi)\hat{g}(\eta)\hat{\psi}(2^{-i}\xi)\hat{\psi}(2^{-j}\eta)\hat{\sigma}(t\xi,t\eta)e^{2\pi i x\cdot(\xi+\eta)}d\xi d\eta|,$$
one has
$$\|\tilde{\mathcal{M}}^{i,j}(f,g)\|_{L^{2/3}}\lesssim (\max\{2^{i},2^{j}\})^{-\frac{d-4}{2}}\|f\|_{p_1}\|g\|_{p_2}$$
    for all $1\leq p_1,p_2\leq 2 $ with $3/2=1/p_1+1/p_2$.

    In particular, since $\text{max}\{i,j\}\geq \frac{i+j}{2}$
    \begin{equation}
    \begin{split}
    \|\mathcal{A}_1^{i,j}(f,g)\|_{L^{2/3}}&\leq \|\tilde{\mathcal{M}}^{ij}(f,g)\|_{L^{2/3}}\lesssim 2^{-(i+j)\frac{d-4}{4}}\|f\|_{1}\|g\|_{2},\\
    \|\mathcal{A}_1^{i,j}(f,g)\|_{L^{2/3}}&\leq \|\tilde{\mathcal{M}}^{ij}(f,g)\|_{L^{2/3}}\lesssim 2^{-(i+j)\frac{d-4}{4}}\|f\|_{2}\|g\|_{1}.\\
        \end{split}
    \end{equation}
    Then interpolation of these bounds with the bounds in Proposition \ref{nodecay} give the estimates in \eqref{dg5}.
\end{proof}
\begin{rem}
    One could think that estimate \eqref{dg5} could potentially allow one to include the missing boundary pieces in Theorem \ref{boundsspherical} for $d\geq 5$. Unfortunately, that does not seem possible with the methods we exploit in this paper.
\end{rem}

\section{Proof of the bounds for the lacunary bilinear spherical maximal operator}\label{prooffirstthm}

\begin{proof}[Proof of Theorem \ref{boundsspherical}]

$\mathcal{M}_{lac}$ satisfies a weak type bound $L^1\times L^{\infty}\rightarrow L^{1,\infty}$ (from \cite{JL} since $\mathcal{M}_{lac}$ is dominated by $\mathcal{M}$).
To see that $\mathcal{M}_{lac}$ can not be bounded from $L^1\times L^{\infty} \rightarrow L^{1}$ one can adapt the example from \cite{JL} to show that for $g\equiv 1$ and $0\leq f\in L^1$
$$\sup_{l\in \Z}\mathcal{A}_{2^{-l}}(f,g)(x)\geq C \sup_{l\in \Z} \frac{1}{2^{-ld}}\int_{B(x,2^{-l})}f(y)dy=:M^{lac}f(x),$$
so the lacunary Hardy-Littlewood maximal function $M^{lac}$ would be bounded in $L^1$ which is false (for any $f\in L^1$  which is not identically zero,  $\frac{1}{|x|^d}\lesssim M^{lac}f(x)$  for $|x|>>1$, so $M^{lac}f\notin L^1$).

 The case $p=\infty$ or $q=\infty$ follows immediately from the known bounds for $\mathcal{M}$ as stated in Theorem \ref{boundsjl}. Thus, we can assume $p,q<\infty$.

The proof will follow from Corollary \ref{decayforpieces} and an adaptation of the ideas in Section 4 of \cite{HHY}.

    Let 
    \begin{equation}
        \mathcal{A}_{2^{-l}}(f,g)(x)=\int_{\R^{2d}} \hat{f}(\xi)\hat{g}(\eta) \hat{\sigma}_{2d-1}(2^{-l}\xi,2^{-l}\eta)e^{2\pi i x\cdot (\xi+\eta)}d\xi d\eta.
    \end{equation}

  One can break down $\mathcal{A}_{2^{-l}}$ into low-low, low-high, high-low and high-high frequencies as follows
  \begin{equation}
      \mathcal{A}_{2^{-l}}(f,g)(x)=-\mathcal{A}_{2^{-l}}^{00}(f,g)(x)+\mathcal{A}_{2^{-l}}^{0\infty}(f,g)(x)+\mathcal{A}_{2^{-l}}^{\infty 0}(f,g)(x)+\sum_{i,j\geq 1}\mathcal{A}_{2^{-l}}^{i,j}(f,g)(x),
  \end{equation}
  where 
  \begin{equation}\label{sphere HL decomp}
      \begin{split}
          \mathcal{A}_{2^{-l}}^{00}(f,g)(x)=&\int_{\R^{2d}}\hat{f}(\xi)\hat{g}(\eta)\hat{\sigma}_{2d-1}(2^{-l}\xi,2^{-l}\eta)\hat{\varphi}(2^{-l}\xi)\hat{\varphi}(2^{-l}\eta)e^{2\pi i x\cdot (\xi+\eta)}d\xi d\eta, \\
  \mathcal{A}_{2^{-l}}^{0\infty}(f,g)(x)=&\int_{\R^{2d}}\hat{f}(\xi)\hat{g}(\eta)\hat{\sigma}_{2d-1}(2^{-l}\xi,2^{-l}\eta)\hat{\varphi}(2^{-l}\xi)e^{2\pi i x\cdot (\xi+\eta)}d\xi d\eta, \\
  \mathcal{A}_{2^{-l}}^{\infty 0}(f,g)(x)=&\int_{\R^{2d}}\hat{f}(\xi)\hat{g}(\eta)\hat{\sigma}_{2d-1}(2^{-l}\xi,2^{-l}\eta)\hat{\varphi}(2^{-l}\eta)e^{2\pi i x\cdot (\xi+\eta)}d\xi d\eta, \\
  \mathcal{A}_{2^{-l}}^{i,j}(f,g)(x)=&\int_{\R^{2d}}\hat{f}(\xi)\hat{g}(\eta)\hat{\sigma}_{2d-1}(2^{-l}\xi,2^{-l}\eta)\hat{\psi}(2^{-i-l}\xi)\hat{\psi}(2^{-j-l}\eta)e^{2\pi i x\cdot (\xi+\eta)}d\xi d\eta. 
      \end{split}
  \end{equation}
  
    Denote $N(i,j;p,q,r)=\|\mathcal{A}_{1}^{i,j}\|_{L^p\times L^q\rightarrow L^r}$, which we know from Corollary \ref{decayforpieces} satisfies 
    $$N(i,j;p,q,r)\lesssim C(p,q,r)2^{-(i+j)\delta}$$
    for some $\delta=\delta(d,p,q,r)>0.$

    Another important fact is the scale-invariance of localized pieces of the average: for all $l\in \Z$, and H\"older exponents $1/r=1/p+1/q$, one has
    \begin{equation}\label{normij}
\|\mathcal{A}_{2^{-l}}^{i,j}\|_{L^p\times L^q\rightarrow L^r}=\|\mathcal{A}_{1}^{i,j}\|_{L^p\times L^q\rightarrow L^r}=:N(i,j;p,q,r).
    \end{equation}
which as observed in \cite{HHY} follows from the fact that 
\begin{equation}
    \mathcal{A}_{2^{-l}}^{i,j}(f,g)(x)=\mathcal{A}_{1}^{i,j}(D_{2^{-l}}f, D_{2^{-l}}g)(2^lx), \text{ for all }i,j\geq 0,
\end{equation}
where $D_{2^{-l}}f(x)=f(2^{-l}x)$.

\begin{prop}\label{lowfrequency} For $d\geq 2$, one has that
\begin{equation}
\begin{split}
 \mathcal{M}^{LL}_{lac}(f,g)(x):=\sup_{l\in\Z}|\mathcal{A}_{2^{-l}}^{00}(f,g)(x)|&\lesssim Mf(x)Mg(x),\\
  \mathcal{M}^{LH}_{lac}(f,g)(x):=  \sup_{l\in\Z} |\mathcal{A}_{2^{-l}}^{0\infty}(f,g)(x)|&\lesssim Mf(x)\sup_{t\in 2^\Z} \left(\int_{S^{2d-1}}|g(x-tz)|d\sigma_{2d-1}(y,z)\right),\\
   \mathcal{M}^{HL}_{lac}(f,g)(x):= \sup_{l\in\Z}| \mathcal{A}_{2^{-l}}^{\infty 0}(f,g)(x)|&\lesssim Mg(x)\sup_{t\in 2^\Z}\left(\int_{S^{2d-1}}|f(x-ty)|d\sigma_{2d-1}(y,z)\right),
\end{split}
\end{equation}
where $M$ stands for the Hardy-Littlewood maximal operator. 
Since 
\begin{equation}
    \sup_{t>0} \left(\int_{S^{2d-1}}|g(x-tz)|d\sigma_{2d-1}(y,z)\right)\lesssim Mg(x),
\end{equation}
one has that
\begin{equation}
[\mathcal{M}^{LL}_{lac}+\mathcal{M}^{LH}_{lac}+\mathcal{M}^{HL}_{lac}](f,g)(x)\lesssim Mf(x)Mg(x).
\end{equation}
\end{prop}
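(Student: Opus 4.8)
The plan is to treat the three estimates in parallel, since each amounts to a "fixed-scale kernel convolution" bound uniform in $l$. For the low/low piece, I would write $\mathcal{A}_{2^{-l}}^{00}(f,g)(x)$ as an honest integral against a kernel. Since $\hat{\sigma}_{2d-1}(2^{-l}\xi,2^{-l}\eta)\hat{\varphi}(2^{-l}\xi)\hat{\varphi}(2^{-l}\eta)$ is, up to the dilation $\xi\mapsto 2^{-l}\xi$, $\eta\mapsto 2^{-l}\eta$, a fixed compactly supported smooth multiplier $m^{00}(\xi,\eta) = \hat{\sigma}_{2d-1}(\xi,\eta)\hat{\varphi}(\xi)\hat{\varphi}(\eta)$, its inverse Fourier transform is a fixed Schwartz function $K^{00}(u,v)$ on $\R^{2d}$; rescaling gives
$$\mathcal{A}_{2^{-l}}^{00}(f,g)(x) = 2^{2ld}\int_{\R^{2d}} K^{00}(2^l(x-u),2^l(x-v))\,f(u)g(v)\,du\,dv.$$
Because $K^{00}$ is Schwartz, $|K^{00}(a,b)|\lesssim (1+|a|)^{-d-1}(1+|b|)^{-d-1}$, which factors. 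A standard argument (write the weight as a superposition of normalized indicators of balls, or simply $|K^{00}(2^l(x-u),\cdot)|\lesssim \sum_{k\geq 0}2^{-k(d+1)}2^{kd}\mathbbold{1}_{|x-u|\leq 2^{k-l}}2^{-ld}$) then bounds the $u$-integral by $Mf(x)$ and the $v$-integral by $Mg(x)$, uniformly in $l$; taking the supremum over $l\in\Z$ gives the first inequality.

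For the low/high piece, the multiplier $\hat{\sigma}_{2d-1}(2^{-l}\xi,2^{-l}\eta)\hat{\varphi}(2^{-l}\xi)$ is low-frequency only in $\xi$. I would factor the kernel by freezing the $\eta$-behavior: writing $\hat{\varphi}(2^{-l}\xi) = \widehat{\varphi_{2^{-l}}}(\xi)$ and noting that convolution in the first variable with $\varphi_{2^{-l}}$ commutes with the spherical average, one gets
$$\mathcal{A}_{2^{-l}}^{0\infty}(f,g)(x) = \int_{S^{2d-1}} (\varphi_{2^{-l}}*f)(x-2^{-l}y)\,g(x-2^{-l}z)\,d\sigma_{2d-1}(y,z).$$
Since $|y|\leq 1$ on the sphere and $|\varphi_{2^{-l}}*f(x-2^{-l}y)|\lesssim Mf(x)$ uniformly in $|y|\leq 1$ and in $l$ (again because $\varphi$ is Schwartz, so $\varphi_{2^{-l}}*|f|(w)\lesssim Mf(x)$ whenever $|w-x|\lesssim 2^{-l}$), I can pull the $Mf(x)$ factor out of the integral and bound what remains by $\sup_{t\in 2^{\Z}}\int_{S^{2d-1}}|g(x-tz)|\,d\sigma_{2d-1}(y,z)$. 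The high/low estimate is identical after swapping the roles of $f$ and $g$. Finally, to collapse everything to $Mf(x)Mg(x)$, I use the pointwise bound $\sup_{t>0}\int_{S^{2d-1}}|g(x-tz)|\,d\sigma_{2d-1}(y,z)\lesssim Mg(x)$, which follows by slicing (the stated coarea formula of \cite{JL}) the sphere into lower-dimensional spheres in $z$ and noting each lower-dimensional spherical average of $|g|$ at radius $\leq t$ is $\lesssim Mg(x)$ after integrating the weight $(1-|y|^2)^{(d-2)/2}$ over $|y|\leq 1$; alternatively one dominates the spherical average of $|g|$ over shells by the ball average directly. Adding the three displays and using $\sup_l(a_l+b_l+c_l)\leq \sup_l a_l + \sup_l b_l + \sup_l c_l$ yields the last inequality.

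The main obstacle is not conceptual but bookkeeping: making the "$\varphi_{2^{-l}}*|f|(w)\lesssim Mf(x)$ for $|w-x|\lesssim 2^{-l}$" step genuinely uniform in $l$ and in the sphere variable, i.e. checking that the Schwartz tails of $\varphi$ (and of $K^{00}$) produce constants independent of $l$ after the rescaling. This is the one place where one must be careful that the decay exponent $d+1$ (any exponent $>d$ works) is fast enough for the resulting geometric series in the dyadic annuli to converge to a constant multiple of the Hardy–Littlewood maximal function; everything else is routine application of Minkowski's and Young's inequalities together with the single-scale reduction \eqref{normij}.
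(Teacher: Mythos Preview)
Your proposal is correct and follows essentially the same approach as the paper. The paper's proof treats only $\mathcal{M}_{lac}^{LH}$ explicitly (claiming the others are similar): it works at scale $1$ first, using $(f*\varphi)(x-y)\lesssim Mf(x)$ for $|y|\le 1$ to pull out $Mf(x)$, then rescales via $\mathcal{A}_{2^{-l}}^{0\infty}(f,g)(x)=\mathcal{A}_{1}^{0\infty}(D_{2^{-l}}f,D_{2^{-l}}g)(2^lx)$, and finally uses the coarea formula from \cite{JL} to bound the residual spherical integral by $Mg(x)$---this is exactly your argument for the LH piece, and your direct kernel treatment of the LL piece is a minor (and equally standard) variant of what the paper leaves implicit.
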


Using H\"older's inequality and the boundedness of $M$ in $L^p$ for all $1<p\leq \infty,$ one gets the following corollary.
\begin{cor}\label{non HH sphere bounds} Let $d\geq2$. Then
    \begin{equation}
\|\mathcal{M}^{LL}_{lac}+\mathcal{M}^{LH}_{lac}+\mathcal{M}^{HL}_{lac}\|_{L^p\times L^q\rightarrow L^r}<\infty
\end{equation}
for all $p,q\in (1,\infty]$, $r\in (0,\infty]$ and $1/r=1/p+1/q$. 
\end{cor}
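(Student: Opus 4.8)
The plan is to prove Proposition \ref{lowfrequency} first, since Corollary \ref{non HH sphere bounds} then follows by a routine application of H\"older's inequality together with the $L^p$-boundedness of the Hardy--Littlewood maximal operator for $1<p\le\infty$ (and the $L^\infty$ bound is trivial). So the real content is the three pointwise domination estimates, and I would handle the low/low piece first, then the two mixed pieces by essentially the same argument.

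For $\mathcal{M}^{LL}_{lac}$: observe that $\widehat{\sigma}_{2d-1}(2^{-l}\xi,2^{-l}\eta)\hat\varphi(2^{-l}\xi)\hat\varphi(2^{-l}\eta)$ is, up to the rescaling $(\xi,\eta)\mapsto(2^{-l}\xi,2^{-l}\eta)$, a fixed smooth compactly supported multiplier $m_0(\xi,\eta)=\widehat{\sigma}_{2d-1}(\xi,\eta)\hat\varphi(\xi)\hat\varphi(\eta)$. Hence $\mathcal{A}^{00}_{2^{-l}}(f,g)(x)=\int_{\R^{2d}}K_0(2^l(x-u),2^l(x-v))\,2^{2ld}f(u)g(v)\,du\,dv$ where $K_0=\check{m}_0$ is a Schwartz function on $\R^{2d}$ (here I would write $K_0$ as the inverse Fourier transform of $m_0$, being careful with the fact that $m_0$ is a multiplier in the ``diagonal'' variables $\xi+\eta$ for the output). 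Since $K_0$ decays faster than any polynomial, I can dominate $|K_0(2^l(x-u),2^l(x-v))|$ by a tensor product of $L^1$-normalized bumps $2^{-ld}\Phi(2^l(x-u))\cdot 2^{-ld}\Phi(2^l(x-v))$ with $\Phi$ a fixed radially-decreasing integrable majorant, and the standard fact that convolution against such an approximate identity is pointwise $\lesssim M$ gives $|\mathcal{A}^{00}_{2^{-l}}(f,g)(x)|\lesssim (M f(x))(Mg(x))$ uniformly in $l$; taking the sup over $l\in\Z$ finishes this piece. The key point is that the estimate is uniform in $l$ because everything is built from a single rescaled kernel.

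For $\mathcal{M}^{LH}_{lac}$ (and symmetrically $\mathcal{M}^{HL}_{lac}$): here only the $\xi$-variable is cut off by $\hat\varphi(2^{-l}\xi)$, so I would think of $\mathcal{A}^{0\infty}_{2^{-l}}(f,g)$ as first applying the full surface measure $\sigma_{2d-1}$ at scale $2^{-l}$ and then a low-frequency projection in the first variable. Concretely, write $\widehat{\sigma}_{2d-1}(2^{-l}\xi,2^{-l}\eta)\hat\varphi(2^{-l}\xi)$ and split off the $\eta$-dependence: for fixed $x$, the $\eta$-average is exactly an average of $g$ over (a fibered family of) spheres, which is controlled by $\sup_{t\in 2^\Z}\int_{S^{2d-1}}|g(x-tz)|\,d\sigma_{2d-1}(y,z)$ — one passes the absolute value inside because the $\hat\varphi(2^{-l}\xi)$ factor acts only on the $f$-variable and contributes an $L^1$-normalized smooth bump $2^{-ld}\Phi(2^l\cdot)$ convolved against $f$ in that variable, giving the $Mf(x)$ factor. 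More carefully I would write $\mathcal{A}^{0\infty}_{2^{-l}}(f,g)(x) = \int_{S^{2d-1}}(\varphi_{2^{-l}}*f)(x-2^{-l}y)\,g(x-2^{-l}z)\,d\sigma_{2d-1}(y,z)$ where $\varphi_{2^{-l}}(w)=2^{ld}\varphi(2^lw)$; then $|\varphi_{2^{-l}}*f|\le C\,Mf$ pointwise with $C$ independent of $l$, so $|\mathcal{A}^{0\infty}_{2^{-l}}(f,g)(x)|\le C\,Mf(x)\int_{S^{2d-1}}|g(x-2^{-l}z)|\,d\sigma_{2d-1}(y,z)$, and taking the supremum over $l\in\Z$ gives the claimed bound. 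Finally, the inequality $\sup_{t>0}\int_{S^{2d-1}}|g(x-tz)|\,d\sigma_{2d-1}(y,z)\lesssim Mg(x)$ is the classical observation that the ``spherical average in only one of the two $d$-dimensional slots'' collapses, after integrating out $y$, to an average of $|g|$ over the ball $B(x,t)$ (because the $z$-marginal of $\sigma_{2d-1}$ is an absolutely continuous measure on $B^d(0,1)$ with bounded density), hence $\lesssim Mg(x)$; combining the three displays yields the last assertion of the proposition.

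The main obstacle — such as it is — is purely bookkeeping: being precise about how the multiplier $\widehat{\sigma}_{2d-1}(2^{-l}\xi,2^{-l}\eta)\hat\varphi(2^{-l}\xi)\hat\varphi(2^{-l}\eta)$ corresponds to a \emph{fixed} (rescaled) convolution kernel on $\R^{2d}$ acting in the pair of variables $(x-u,x-v)$, and checking that the resulting kernel is Schwartz so that it admits a product-of-approximate-identities majorant uniformly in $l$. There is no hard harmonic analysis here beyond the well-known pointwise bound $|\phi_t * f|\lesssim Mf$ for Schwartz $\phi$ and the collapse of the one-sided spherical average; the scaling identity \eqref{normij}-type reasoning (kernels at scale $2^{-l}$ are dilates of the scale-$1$ kernel) is what makes everything uniform in $l$, and once that is in place the sup over $l\in\Z$ is harmless.
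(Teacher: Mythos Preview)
Your approach is correct and essentially identical to the paper's: both establish the pointwise dominations of Proposition~\ref{lowfrequency} by recognizing the low-frequency cutoffs as convolutions with rescaled Schwartz bumps (hence $\lesssim M$) and by collapsing the one-sided marginal $\int_{S^{2d-1}}|g(x-tz)|\,d\sigma_{2d-1}(y,z)$ to a ball average via the slicing/coarea formula. The one spot to tighten is the $LH$ step: from $|\varphi_{2^{-l}}*f|\lesssim Mf$ pointwise you only get $Mf(x-2^{-l}y)$, not $Mf(x)$; the paper handles this by proving $(f*\varphi)(x-y)\lesssim Mf(x)$ for $|y|\le 1$ at unit scale and then applying the dilation identity $\mathcal{A}_{2^{-l}}^{0\infty}(f,g)(x)=\mathcal{A}_1^{0\infty}(D_{2^{-l}}f,D_{2^{-l}}g)(2^lx)$, which is the clean way to see that the shift by $2^{-l}y$ (being at the same scale as the bump) is harmless.
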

    
    \begin{proof}[Proof of Proposition \ref{lowfrequency}]
    Let us check the inequality for $\mathcal{M}_{lac}^{LH}$, since the others follow similarly. We follow the arguments in \cite{HHY} most closely.

    Since $(f*\varphi)(x-y)\lesssim Mf(x)$ for all $|y|\leq 1$, one has 
$$|\mathcal{A}_1^{0\infty}(f,g)(x)|\leq \int_{S^{2d-1}}|f*\varphi(x-y)||g(x-z)|d\sigma(y,z)\lesssim Mf(x)\cdot \int_{S^{2d-1}}|g(x-z)|d\sigma(y,z).$$

Therefore
\begin{equation*}
\begin{split}
    \sup_{l\in \Z}|\mathcal{A}_{2^{-l}}^{0\infty}(f,g)(x)|=&\sup_{l\in \Z}|\mathcal{A}_{1}^{0\infty}(D_{2^{-l}}f,D_{2^{-l}}g)(2^lx)|\\
    \lesssim& \sup_{l\in\Z} M(D_{2^{-l}}f)(2^lx)\cdot \int_{S^{2d-1}}|D_{2^{-l}}g(2^lx-z)|d\sigma(y,z)\\
\lesssim&Mf(x)\cdot\sup_{l\in\Z}\int_{S^{2d-1}}|g(x-2^{-l}z)|d\sigma(y,z).
\end{split}
\end{equation*}

Then one just needs to observe that by the slicing formula \eqref{slicing formula} applied with one of the functions being constant, for any $t>0$ one has 
\begin{equation*}
\begin{split}
     \int_{S^{2d-1}} |g(x-tz)|d\sigma(y,z)=&\int_{B^d(0,1)}|g(x-tz)||S^{d-1}|(1-|z|^2)^{\frac{d-2}{2}}dz\\
    \lesssim &\int_{B^d(0,1)}|g(x-tz)|dz\lesssim Mg(x).
\end{split}
\end{equation*}
    \end{proof}
Define $\mathbb{A}_{\mathcal{N}}(f,g)(x)=\sup_{|l|\leq \mathcal{N}}|\mathcal{A}_{2^{-l}}(f,g)(x)|$.
    
Here we observe that since $\|\mathcal{A}_{2^{-l}}\|_{L^p\times L^q\rightarrow L^r}=\|\mathcal{A}_{1}\|_{L^p\times L^q\rightarrow L^r}<\infty$, it is clear that for all $\mathcal{N}\in \N$,
$$A_{\mathcal{N}}(p,q,r):=\|\mathbb{A}_{\mathcal{N}}\|_{L^p\times L^q\rightarrow L^r}<\infty.$$

Our goal is to show that $A_{\mathcal{N}}(p,q,r)$ is bounded by a constant independent of $\mathcal{N}$. Moreover, in view of Corollary \ref{non HH sphere bounds}, it suffices to prove this for the high-high frequency part of the operator. Thus, for every $i,j\geq 1$, define the vector valued operator
\begin{equation}
\mathbb{A}_{\mathcal{N}}^{i,j}:\{f_l\}_{|l|\leq \mathcal{N}}\times \{g_l\}_{|l|\leq \mathcal{N}}\longmapsto \{\mathcal{A}_{2^{-l}}(f_l*\psi_{i+l},g_l*\psi_{j+l})(x)\}_{|l|\leq \mathcal{N}}.
\end{equation}

By the definition of $A_{\mathcal{N}}(p,q,r)$ and the same computation as in \cite{HHY} (page 430), if $p,q>1$ then
\begin{equation}\label{firstbound}
\|\mathbb{A}_{\mathcal{N}}^{i,j}(\{f_l\}_{|l|\leq \mathcal{N}}\times \{g_l\}_{|l|\leq \mathcal{N}})\|_{L^r(\ell^{\infty})}\lesssim A_{\mathcal{N}}(p,q,r) \|\{f_l\}_{|l|\leq \mathcal{N}}\|_{L^{p}(\ell^{\infty})} \| \{g_l\}_{|l|\leq \mathcal{N}}\|_{L^{q}(\ell^{\infty})}.
\end{equation}

The second computation in \cite{HHY}, which exploits the equality (\ref{normij}), gives for $p,q\in (1,\infty)$ and $1/r=1/p+1/q$
\begin{equation}\label{secondbound}
\begin{split}
\|\mathbb{A}_{\mathcal{N}}^{i,j}(\{f_l\}_{|l|\leq \mathcal{N}}\times \{g_l\}_{|l|\leq \mathcal{N}})\|_{L^r(\ell^{r})}\lesssim& N(i,j;p,q,r) \|\{f_l\}_{|l|\leq \mathcal{N}}\|_{L^{p}(\ell^{p})} \| \{g_l\}_{|l|\leq \mathcal{N}}\|_{L^{q}(\ell^{q})}\\
\lesssim& N(i,j;p,q,r) \|\{f_l\}_{|l|\leq \mathcal{N}}\|_{L^{p}(\ell^{1})} \| \{g_l\}_{|l|\leq \mathcal{N}}\|_{L^{q}(\ell^{1})}.
\end{split}
\end{equation}

Interpolation of the bounds in (\ref{firstbound}) and (\ref{secondbound}) will then imply that for any 
$$p,q\in (1,\infty), \text{ and }1/r=1/p+1/q  $$
one has
\begin{equation}\label{thirdbound}
\begin{split}
\|\mathbb{A}_{\mathcal{N}}^{i,j}(\{f_l\}_{|l|\leq \mathcal{N}}\times &\{g_l\}_{|l|\leq \mathcal{N}})\|_{L^r(\ell^{2r})}\\
\lesssim &A_{\mathcal{N}}(p,q,r)^{1/2} N(i,j;p,q,r)^{1/2} \|\{f_l\}_{|l|\leq \mathcal{N}}\|_{L^{p}(\ell^{2})} \| \{g_l\}_{|l|\leq \mathcal{N}}\|_{L^{q}(\ell^{2})}.
\end{split}
\end{equation}

 Using bound (\ref{thirdbound}) and the Littlewood-Paley Theorem (Theorem 6.1.2 in \cite{classicalgrafakos}), for any $p,q>1$
\begin{equation}
    \begin{split}
        \left\|\sup_{|l|\leq \mathcal{N}} |\mathcal{A}_{2^{-l}}^{i,j}(f,g)|\right\|_{L^{r}(\R^d)}=& \left\|\sup_{|l|\leq \mathcal{N}} |\mathcal{A}_{2^{-l}}(f*\psi_{i+l},g*\psi_{j+l})|\right\|_{L^r(\R^d)}\\
=&\left\|\mathbb{A}_{\mathcal{N}}^{i,j} (\{f*\psi_{i+l}\}_{|l|\leq \mathcal{N}}\times \{g*\psi_{j+l}\}_{|l|\leq \mathcal{N}})\right\|_{L^r(\ell^{\infty})}\\
\le&\left\|\mathbb{A}_{\mathcal{N}}^{i,j} (\{f*\psi_{i+l}\}_{|l|\leq \mathcal{N}}\times \{g*\psi_{j+l}\}_{|l|\leq \mathcal{N}})\right\|_{L^r(\ell^{2r})}\\
\lesssim& A_{\mathcal{N}}(p,q,r)^{1/2}N(i,j;p,q,r)^{1/2}\\
&\,\,\left\|\{f*\psi_{i+l}\}_{|l|\leq \mathcal{N}}\right\|_{L^p(\ell^2)}\left\|\{g*\psi_{j+l}\}_{|l|\leq \mathcal{N}}\right\|_{L^q(\ell^2)}\\
 \lesssim&A_{\mathcal{N}}(p,q,r)^{1/2}N(i,j;p,q,r)^{1/2}\|f\|_p\|g\|_q.
    \end{split}
\end{equation}

\begin{rem} The argument fails for $p=1$ since with Littlewood-Paley theory we can only control $\|\{f*\psi_{i+l}\}_{|l|\leq \mathcal{N}}\|_{L^{1,\infty}(\ell^2)}\lesssim \|f\|_1$ but not $\|\{f*\psi_{i+l}\}_{|l|\leq \mathcal{N}}\|_{L^{1}(\ell^2)}$. Even if one tries to use this weak type control of the Littlewood-Paley pieces, one runs into the issue there are no bounds of the form
$\|\mathcal{A}_{1}^{i,j}(f,g)\|_{L^{r,\infty}}\leq N(i,j;1,q,r)\|f\|_{L^{1,\infty}} \|g\|_q$.
\end{rem}

To finish the proof for the interior points $p,q\in (1,\infty)$, we split into two cases for $r$. When $r<1$, we use that $\|\cdot\|_{r}^{r}$ satisfies the triangle inequality, and for $r\geq 1$, we use that $\|\cdot\|_r$ is a norm.

\textbf{Case $r<1$}:
\begin{equation}
    \begin{split}
        A_{\mathcal{N}}(p,q,r)^r=&\|\sup_{|l|\leq \mathcal{N}}|\mathcal{A}_{2^{-l}}|\|^r_{L^p\times L^q\rightarrow L^r}\\
\leq&\|\mathcal{M}_{lac}^{LL}+\mathcal{M}_{lac}^{LH}+\mathcal{M}_{lac}^{HL}\|_{L^p\times L^q\rightarrow L^r}^{r}+\|\sum_{i,j\geq 1} \sup_{|l|\leq \mathcal{N}}|\mathcal{A}_{2^{-l}}^{i,j}|\|_{L^p\times L^q\rightarrow L^r}^r\\
\lesssim &1+\sum_{i,j\geq 1} \|\sup_{|l|\leq \mathcal{N}}|\mathcal{A}_{2^{-l}}^{i,j}|\|_{L^p\times L^q\rightarrow L^r}^r \\
\lesssim & 1+A_{\mathcal{N}}(p,q,r)^{r/2}\sum_{i,j\geq 1}N(i,j;p,q,r)^{r/2}.
\end{split}
\end{equation}

Hence, we deduce
\begin{equation}
 A_{\mathcal{N}}(p,q,r)\lesssim 1 +\left\{\sum_{i,j\geq 1} N(i,j;p,q,r)^{r/2}\right\}^{2/r}.
\end{equation}

By Corollary \ref{decayforpieces} one has 
$$\sum_{i,j\geq 1} N(i,j;p,q,r)^{r/2}\lesssim \sum_{i,j\geq 1} 2^{-(i+j)\delta r/2}\lesssim 1.$$

The case $r\geq 1$ is similar but we use the triangle inequality for the norm $\|\cdot\|_r$ to take care of the sum in $i,j$.

Therefore for any $p,q\in (1,\infty)$ and $1/r=1/p+1/q$
$$\sup_{\mathcal{N\in \N}} A_{\mathcal{N}}(p,q,r)\leq C$$
and this finishes the proof.

\end{proof}

\section{Proof of the bounds for the lacunary triangle averaging maximal operator}\label{proofsecondthm}

In analogy with what we had for the bilinear spherical averaging operator define
\begin{equation}
      \begin{split}
          \mathcal{T}_{l}^{00}(f,g)(x)=&\int_{\R^{2d}}\hat{f}(\xi)\hat{g}(\eta)\hat{\mu}(2^{-l}\xi,2^{-l}\eta)\hat{\varphi}(2^{-l}\xi)\hat{\varphi}(2^{-l}\eta)e^{2\pi i x\cdot (\xi+\eta)}d\xi d\eta, \\
  \mathcal{T}_{l}^{0\infty}(f,g)(x)=&\int_{\R^{2d}}\hat{f}(\xi)\hat{g}(\eta)\hat{\mu}(2^{-l}\xi,2^{-l}\eta)\hat{\varphi}(2^{-l}\xi)e^{2\pi i x\cdot (\xi+\eta)}d\xi d\eta, \\
  \mathcal{T}_{l}^{\infty 0}(f,g)(x)=&\int_{\R^{2d}}\hat{f}(\xi)\hat{g}(\eta)\hat{\mu}(2^{-l}\xi,2^{-l}\eta)\hat{\varphi}(2^{-l}\eta)e^{2\pi i x\cdot (\xi+\eta)}d\xi d\eta, \\
  \mathcal{T}_{l}^{i,j}(f,g)(x)=&\int_{\R^{2d}}\hat{f}(\xi)\hat{g}(\eta)\hat{\mu}(2^{-l}\xi,2^{-l}\eta)\hat{\psi}(2^{-i-l}\xi)\hat{\psi}(2^{-j-l}\eta)e^{2\pi i x\cdot (\xi+\eta)}d\xi d\eta,
      \end{split}
  \end{equation}
and let $\mathcal{T}_{lac}^{LL},\,\mathcal{T}_{lac}^{LH},\,\mathcal{T}_{lac}^{HL}$ be

\begin{equation}
\begin{split}
 \mathcal{T}^{LL}_{lac}(f,g)(x):=&\sup_{l\in\Z}|\mathcal{T}_{l}^{00}(f,g)(x)|,\\
  \mathcal{T}^{LH}_{lac}(f,g)(x):=&\sup_{l\in\Z} |\mathcal{T}_{l}^{0\infty}(f,g)(x)|,\\
   \mathcal{T}^{HL}_{lac}(f,g)(x):=&\sup_{l\in\Z} |\mathcal{T}_{l}^{\infty 0}(f,g)(x)|.
\end{split}
\end{equation}

Recall that we are assuming $d\geq 2$ and $\mathcal{R}_1$ is the region inside $[0,1]\times [0,1]$ given by the closure of the points $(0,0),\,(0,1),\,(1,0)$ and $(\frac{d}{d+1},\frac{d}{d+1})$.

With similar arguments as the lacunary bilinear maximal function the proof will reduce to some key ingredients. First we will need to guarantee that the low-low, low-high, and high-low parts of the operator are bounded. Secondly, it is immediate that the analogue of the estimate in Proposition \ref{nodecay} holds for the pieces of the triangle averaging operator $\mathcal{T}_1$ for exponents $(1/p,1/q)\in \mathcal{R}_1$, where we know $\mathcal{T}_1$ is bounded by Corollary \ref{holderboundsforT1}. We state this as a proposition.

\begin{prop}\label{nodecaytriangle} Let $p,q\in [1,\infty]$ and $r\in (0,\infty]$, such that $\frac{1}{r}=\frac{1}{p}+\frac{1}{q}$. If $(1/p,1/q)\in \mathcal{R}_1$, then there exists $C>0$ such that for all $i,j\geq 0$
\begin{equation}
    \|\mathcal{T}^{i,j}_1(f,g)\|_{r}\leq C\|f\|_p\|g\|_q.
\end{equation}
\end{prop}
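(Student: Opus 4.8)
The plan is to mimic the proof of Proposition \ref{nodecay} verbatim, replacing the single-scale bilinear spherical average $\mathcal{A}_1$ with the single-scale triangle average $\mathcal{T}_1$ and Corollary \ref{holderboundsforA1} with Corollary \ref{holderboundsforT1}. The only place where the hypotheses genuinely matter is that we must restrict $(1/p,1/q)$ to lie in $\mathcal{R}_1$, since that is exactly the region where we know the H\"older bounds $\mathcal{T}_1:L^p\times L^q\to L^r$ hold; outside of it we simply have no single-scale input to feed into the argument.

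First I would set $\psi_i(x)=2^{id}\psi(2^ix)$ as before, and observe that for $i,j\geq 1$ the multiplier defining $\mathcal{T}_1^{i,j}$ factors as $\hat{\mu}(\xi,\eta)\hat{\psi}(2^{-i}\xi)\hat{\psi}(2^{-j}\eta)$, so that $\mathcal{T}_1^{i,j}(f,g)=\mathcal{T}_1(\psi_i*f,\psi_j*g)$ — this is just the statement that convolving $f$ with $\psi_i$ on the Fourier side multiplies $\hat f$ by $\hat\psi(2^{-i}\cdot)$, and the two Littlewood-Paley cutoffs act in the two variables independently. Then, assuming $(1/p,1/q)\in\mathcal{R}_1$, Corollary \ref{holderboundsforT1} gives
\begin{equation*}
\|\mathcal{T}_1^{i,j}(f,g)\|_r=\|\mathcal{T}_1(\psi_i*f,\psi_j*g)\|_r\leq C\|\psi_i*f\|_p\|\psi_j*g\|_q.
\end{equation*}
Finally I would apply Young's convolution inequality together with the scale-invariance $\|\psi_i\|_1=\|\psi\|_1\lesssim 1$ to bound $\|\psi_i*f\|_p\leq\|\psi_i\|_1\|f\|_p\lesssim\|f\|_p$ and similarly for $g$, which yields the claimed estimate with a constant independent of $i,j$. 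The cases $i=0$ or $j=0$ are handled identically, using $\hat\varphi$ in place of $\hat\psi(2^{-i}\cdot)$ and $\varphi_0=\varphi$ (with $\|\varphi\|_1\lesssim 1$) in place of $\psi_i$.

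There is essentially no obstacle here: the proposition is a soft consequence of the single-scale bound plus Young's inequality, and the restriction to $\mathcal{R}_1$ is built into the hypothesis precisely so that Corollary \ref{holderboundsforT1} applies. The only minor point worth stating explicitly is that $\mathcal{R}_1$ is convex and the Littlewood-Paley smoothing operators $f\mapsto\psi_i*f$ are bounded on every $L^p$, $1\leq p\leq\infty$, uniformly in $i$, so no endpoint subtlety arises; in particular, unlike in the high-high decay estimate, we do not need $r\geq 1$ or any curvature input, just the crude boundedness of $\mathcal{T}_1$ on the prescribed range. I would keep the write-up to three or four lines, referencing the proof of Proposition \ref{nodecay} for the details that are word-for-word the same.
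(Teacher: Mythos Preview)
Your proposal is correct and matches the paper's approach exactly: the paper states that Proposition \ref{nodecaytriangle} is immediate from the proof of Proposition \ref{nodecay} with $\mathcal{A}_1$ replaced by $\mathcal{T}_1$ and Corollary \ref{holderboundsforA1} replaced by Corollary \ref{holderboundsforT1}, which is precisely what you have written out.
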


Thirdly, if we prove a decay bound for $\mathcal{T}^{i,j}$ in the triple $(p,q,r)=(2,2,1)$, then by interpolation with the bound in Proposition \ref{nodecaytriangle} we will have decay in $i,j$ throughout the boundedness region $\mathcal{R}_1$. Hence, we still need to control the low-low, low-high, high-low terms and establish the decay bound at the exponent $ (p,q,r)=(2,2,1)$.

\begin{prop}[Controlling LL, LH, HL parts]\label{lowtriangle}
    For any $d\geq 2$, one has 
    \begin{equation}
[\mathcal{T}_{lac}^{LL}+\mathcal{T}_{lac}^{LH}+\mathcal{T}_{lac}^{HL}](f,g)(x)\lesssim Mf(x)\mathcal{S}_{lac}(g)(x)
    \end{equation}
    where $\mathcal{S}_{lac}$ stands for the lacunary spherical maximal function.
\end{prop}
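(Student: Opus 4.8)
The plan is to handle the three pieces one at a time, writing each as a single-scale triangle average of mollified inputs and exploiting the $O(d)$-parametrization $\mathcal{T}_{t}(F,G)(x)=\int_{O(d)}F(x-tRu)G(x-tRv)\,d\mu(R)$, with $|u|=|v|=|u-v|=1$ and $\mu$ the normalized Haar measure on $O(d)$. Writing $\varphi_{2^{-l}}(x)=2^{ld}\varphi(2^{l}x)$ (so $\|\varphi_{2^{-l}}\|_{1}\sim 1$), one has $\mathcal{T}_{l}^{00}(f,g)=\mathcal{T}_{2^{-l}}(f\ast\varphi_{2^{-l}},\,g\ast\varphi_{2^{-l}})$, $\mathcal{T}_{l}^{0\infty}(f,g)=\mathcal{T}_{2^{-l}}(f\ast\varphi_{2^{-l}},\,g)$ and $\mathcal{T}_{l}^{\infty 0}(f,g)=\mathcal{T}_{2^{-l}}(f,\,g\ast\varphi_{2^{-l}})$. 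Two elementary facts do all the work. First, since $\varphi\in\mathcal{S}(\R^{d})$ and $|Ru|=|Rv|=1$, a mollified input is controlled by the Hardy--Littlewood maximal function uniformly in the scale: $\sup_{|y-x|\le 2^{-l}}\bigl|(h\ast\varphi_{2^{-l}})(y)\bigr|\lesssim Mh(x)$, with constant depending only on $\varphi$ and $d$; this is the rescaled form of the pointwise bound already used for Proposition \ref{lowfrequency}, obtained by dominating $|\varphi_{2^{-l}}|$ by a rapidly decaying superposition of $L^{1}$-normalized indicators of the balls $B(0,2^{k-l})$, $k\ge 0$. Second, because $R\mapsto Rw$ pushes the Haar measure on $O(d)$ forward to the normalized surface measure on $S^{d-1}$ whenever $|w|=1$, a non-mollified input only ever produces a linear spherical average: $\int_{O(d)}|h(x-2^{-l}Rw)|\,d\mu(R)=A_{2^{-l}}|h|(x)\le \mathcal{S}_{lac}h(x)$, where $A_{t}$ is the linear spherical average of radius $t$.

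With these two facts the estimates are immediate. For the low--high piece the mollifier sits in the $f$-slot, so $|\mathcal{T}_{l}^{0\infty}(f,g)(x)|\le\bigl(\sup_{|y-x|\le 2^{-l}}|(f\ast\varphi_{2^{-l}})(y)|\bigr)\,A_{2^{-l}}|g|(x)\lesssim Mf(x)\,A_{2^{-l}}|g|(x)$, and taking the supremum over $l\in\Z$ gives exactly $\mathcal{T}_{lac}^{LH}(f,g)(x)\lesssim Mf(x)\,\mathcal{S}_{lac}g(x)$. For the high--low piece the same computation with the two slots exchanged --- equivalently, using the symmetry $\mathcal{T}_{t}(f,g)=\mathcal{T}_{t}(g,f)$ forced by invariance of $\mathcal{I}=\{(y,z):|y|=|z|=|y-z|=1\}$ under $y\leftrightarrow z$ --- gives $\mathcal{T}_{lac}^{HL}(f,g)(x)\lesssim \mathcal{S}_{lac}f(x)\,Mg(x)$, i.e. the same estimate after interchanging $f$ and $g$. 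For the low--low piece both slots are mollified and the first fact applies to each, so $\mathcal{T}_{lac}^{LL}(f,g)(x)\lesssim Mf(x)\,Mg(x)$. Adding the three, $[\mathcal{T}_{lac}^{LL}+\mathcal{T}_{lac}^{LH}+\mathcal{T}_{lac}^{HL}](f,g)(x)\lesssim Mf(x)\,\mathcal{S}_{lac}g(x)+\mathcal{S}_{lac}f(x)\,Mg(x)+Mf(x)\,Mg(x)$; the displayed term $Mf\cdot\mathcal{S}_{lac}g$ is the one that governs the rest of the argument, the other two being its $f\leftrightarrow g$ reflection and a strictly milder product of two Hardy--Littlewood maximal functions. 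In particular each of the three maps $L^{p}\times L^{q}\to L^{r}$ for $(1/p,1/q)$ on the half-open segments from $(0,0)$ to $(0,1)$ and from $(0,0)$ to $(1,0)$, by H\"older's inequality together with $\|Mh\|_{p}\lesssim\|h\|_{p}$, $\|\mathcal{S}_{lac}h\|_{p}\lesssim\|h\|_{p}$ for $1<p\le\infty$, and the trivial bounds $Mh\le\|h\|_{\infty}$, $\mathcal{S}_{lac}h\le\|h\|_{\infty}$; this is the low-frequency input needed for Theorem \ref{boundstriangle}.

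The only slightly delicate point is the first fact in the precise uniform-in-$l$, localized form stated, but after translating and dilating it is literally the inequality used for Proposition \ref{lowfrequency}, so no new work is required. The real structural content --- and the reason the lacunary linear spherical maximal function, rather than just the Hardy--Littlewood maximal function, is forced to appear, in contrast with the bilinear spherical case --- is that $\mathcal{I}$ admits no ``fattening'' in either slot: since $|u|=|v|=1$, once $O(d)$ is integrated out each slot sees a genuine $(d-1)$-sphere and not a solid ball, so there is no analogue of the inequality $\sup_{t>0}\int_{S^{2d-1}}|g(x-tz)|\,d\sigma_{2d-1}(y,z)\lesssim Mg(x)$ available for $\mathcal{M}_{lac}$. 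Consequently one must invoke the $L^{p}$-boundedness ($1<p\le\infty$) of $\mathcal{S}_{lac}$ due to C. Calder\'on \cite{calderon} and Coifman--Weiss \cite{coifmanweiss}, which is precisely what keeps the whole argument valid for every $d\ge 2$.
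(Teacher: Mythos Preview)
Your argument is correct and follows essentially the same route as the paper's: both reduce to a single scale by dilation, use the pointwise bound $|(h\ast\varphi_{2^{-l}})(x-2^{-l}w)|\lesssim Mh(x)$ for $|w|\le 1$ to handle the mollified slot, and use the $O(d)$-parametrization to recognize the remaining integral as a linear spherical average $A_{2^{-l}}|h|(x)$. You are in fact slightly more careful than the paper: the displayed pointwise inequality $\lesssim Mf\cdot\mathcal{S}_{lac}g$ is literally what one gets only for the LH piece, while HL naturally gives the reflected bound $\mathcal{S}_{lac}f\cdot Mg$ and LL gives $Mf\cdot Mg$; you make this explicit, and correctly note that all three bounds feed identically into the subsequent $L^p\times L^q\to L^r$ corollary for $p,q\in(1,\infty]$.
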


\begin{proof}
For any $l\in \Z$, 
\begin{equation}\label{trianglelhatl}
    \begin{split}
         \mathcal{T}_l^{0\infty}(f,g)(x)=&\int_{\R^{2d}} \hat{f}(\xi)\hat{g}(\eta) \hat{\varphi}(2^{-l}\xi)\hat{\mu}(2^{-l}\xi,2^{-l}\eta)e^{2\pi ix\cdot(\xi+\eta)}d\xi d\eta\\
         =&\mathcal{T}_{0}^{0\infty}(D_{2^{-l}}f,D_{2^{-l}}g)(2^lx)
    \end{split}
\end{equation}
by a simple change of variables. So let us look at the case $l=0$.
\begin{equation*}
\begin{split}
    |\mathcal{T}_{0}^{0\infty}(f,g)(x)|\leq& \int_{\mathcal{I}} |f*\varphi(x-y)||g(x-z)|d\mu(y,z)\\
    \lesssim &Mf(x)\int_{\mathcal{I}} |g(x-z)|d\mu(y,z).
\end{split}
\end{equation*}

   Recall that (see \cite{IPS}) fixing $u,v\in S^{d-1}$ with $|u-v|=1$, one has
   $$\int_{\mathcal{I}}h(x-y)g(x-z)d\mu(y,z)=\int_{O(d)}h(x-Ru)g(x-Rv)dR.$$

In particular, for $h\equiv 1$,
\begin{equation}
    \begin{split}
        \int_{\mathcal{I}} |g(x-z)|d\mu(y,z)=&\int_{O(d)}|g(x-Rv)|dR\\
        = &A_1(|g|)(x)
    \end{split}
\end{equation}
where $A_t$ is the spherical average $A_tg(x)=\int_{S^{d-1}}g(x-ty)d\sigma(y)$. Hence,
$$|\mathcal{T}_0^{0\infty}(f,g)(x)|\lesssim Mf(x)\cdot A_1(|g|)(x).$$

Going back to equality (\ref{trianglelhatl}) one gets that 
\begin{equation}
    \begin{split}
      \sup_{l\in\Z}  |\mathcal{T}_l^{0\infty}(f,g)(x)|
         \lesssim& \sup_{l\in\Z}M(D_{2^{-l}}f)(2^lx)\cdot A_1(|D_{2^{-l}}g|)(2^lx)\\
         \lesssim & Mf(x)\cdot \sup_{l\in\Z}A_{2^{-l}}(|g|)(x)=Mf(x)\cdot \mathcal{S}_{lac}g(x).
    \end{split}
\end{equation}
\end{proof}

Recall that $\mathcal{S}_{lac}$ is bounded in $L^p$ for any $1<p\leq \infty$. Combining that with the boundedness properties of the Hardy-Littlewood maximal function $M$, the following corollary follows immediately from Proposition \ref{lowtriangle}. 

\begin{cor} For any $p,q\in(1,\infty]$, $r\in (0,\infty]$ and $1/r=1/p+1/q$, one has
$$\|\mathcal{T}_{lac}^{LL}+\mathcal{T}_{lac}^{LH}+\mathcal{T}_{lac}^{HL}\|_{L^p\times L^q\rightarrow L^r}<\infty.$$
\end{cor}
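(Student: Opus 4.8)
The plan is to read the corollary off directly from the pointwise domination in Proposition~\ref{lowtriangle} together with the $L^p$-boundedness of the two sublinear operators appearing on its right-hand side. Fix $p,q\in(1,\infty]$ and $r\in(0,\infty]$ with $1/r=1/p+1/q$, and let $f,g\in C_0^\infty(\R^d)$. By Proposition~\ref{lowtriangle},
\[
\big\|[\mathcal{T}_{lac}^{LL}+\mathcal{T}_{lac}^{LH}+\mathcal{T}_{lac}^{HL}](f,g)\big\|_{r}\lesssim \big\|(Mf)\cdot(\mathcal{S}_{lac}g)\big\|_{r}.
\]
Since $1/r=1/p+1/q$ forces $r\le p$ and $r\le q$, one has $r/p+r/q=1$ (with the convention $r/\infty=0$), so the generalized H\"older inequality --- valid for all $r\in(0,\infty]$, including the range $r<1$ where $\|\cdot\|_r$ is only a quasinorm --- gives $\|(Mf)\cdot(\mathcal{S}_{lac}g)\|_r\le\|Mf\|_p\,\|\mathcal{S}_{lac}g\|_q$. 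It then remains to apply the Hardy--Littlewood maximal theorem, $\|Mf\|_p\lesssim\|f\|_p$ for $1<p\le\infty$, together with the $L^q$-boundedness of the lacunary spherical maximal function of C.~Calder\'on and Coifman--Weiss recalled in the introduction, $\|\mathcal{S}_{lac}g\|_q\lesssim\|g\|_q$ for $1<q\le\infty$. Chaining these estimates yields
\[
\big\|[\mathcal{T}_{lac}^{LL}+\mathcal{T}_{lac}^{LH}+\mathcal{T}_{lac}^{HL}](f,g)\big\|_{r}\lesssim\|f\|_p\|g\|_q,
\]
which is the asserted finiteness of the operator norm.

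There is essentially no obstacle here; all the real work sits in Proposition~\ref{lowtriangle}. The only point worth flagging is the hypothesis $p,q>1$: both $M$ and $\mathcal{S}_{lac}$ fail to be bounded on $L^1(\R^d)$, so this scheme genuinely excludes the endpoints $p=1$ and $q=1$. This is precisely why the boundary segments of $\mathcal{R}_1$ emanating from the origin are handled in Theorem~\ref{boundstriangle} by a separate argument (using the $L^\infty\times L^q$- and $L^q\times L^\infty$-type estimates) rather than being deduced from this corollary.
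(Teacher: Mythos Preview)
Your proof is correct and follows exactly the paper's approach: the paper states that the corollary ``follows immediately from Proposition~\ref{lowtriangle}'' via H\"older's inequality and the $L^p$-boundedness of $M$ and $\mathcal{S}_{lac}$, which is precisely what you spell out. (Your closing remark about the boundary segments is slightly off --- the segments emanating from the origin in the $(1/p,1/q)$-plane correspond to $p=\infty$ or $q=\infty$, which \emph{are} covered by this corollary; the separate statement in Theorem~\ref{boundstriangle} is there to give those bounds for all $d\ge2$ rather than only $d\ge7$ --- but this does not affect the proof itself.)
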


Define for all $i,j\geq 1$.
\begin{equation}
\mathcal{T}_1^{i,j}(f,g)(x)=\int_{\R^{2d}}\hat{f}(\xi)\hat{g}(\eta)\hat{\mu}(\xi, \eta)\hat{\psi}(2^{-i}\xi)\hat{\psi}(2^{-j}\eta)e^{2\pi i x\cdot (\xi+ \eta)}d\xi d\eta.
\end{equation}

We have the following result which follows directly from the methods in \cite{triangleaveraging}. We defer the details to the appendix.

\begin{prop} \label{PS ij estimate}  Let $d\geq 7$.  Then there exists $\delta=\delta(d)=\dfrac{3d-20}{32}>0$ such that for all $i,j\geq 1$
    \begin{equation}
      \|\mathcal{T}_{1}^{i,j}(f,g)\|_1\leq C 2^{-(i+j)\delta} \|f\|_2\|g\|_2.
      \end{equation}
\end{prop}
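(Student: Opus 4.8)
The plan is to write $\mathcal{T}_1^{i,j}$ as a bilinear multiplier operator $T_{m_{i,j}}$ with symbol $m_{i,j}(\xi,\eta) = \hat{\mu}(\xi,\eta)\hat{\psi}(2^{-i}\xi)\hat{\psi}(2^{-j}\eta)$, and then invoke the boundedness criterion in Corollary \ref{corl2l2l1} (equivalently, Theorem \ref{ghscriteria}). To apply that corollary I need two ingredients: a pointwise bound on the derivatives of $m_{i,j}$ up to order $\lfloor 2d/3\rfloor + 1$, and a bound on the measure of its support. The support of $m_{i,j}$ is contained in $\{|\xi|\sim 2^i\}\times\{|\eta|\sim 2^j\}$, which has measure $\sim 2^{di}2^{dj} = 2^{d(i+j)}$; hence $|\mathrm{supp}(m_{i,j})|^{1/4}\lesssim 2^{d(i+j)/4}$. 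The key is therefore to show the stationary-phase type decay $\|\partial^\alpha \hat{\mu}(\xi,\eta)\| \lesssim (1+|\xi|+|\eta|)^{-\gamma}$ for a suitable $\gamma$, so that on the support of $m_{i,j}$ we gain a factor roughly $(\max\{2^i,2^j\})^{-\gamma} \le 2^{-\gamma(i+j)/2}$ in $C_0$, overwhelming the $2^{d(i+j)/4}$ loss.

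The decay rate of $\hat{\mu}$ and its derivatives is exactly what the work of Greenleaf--Iosevich--Liu--Palsson or the related analysis in \cite{triangleaveraging} provides: the submanifold $\mathcal{I}$ of $\R^{2d}$ has codimension $3$, and the relevant stationary phase/oscillatory integral estimates give $|\hat{\mu}(\xi,\eta)| \lesssim (1+|\xi|+|\eta|)^{-(d-2)/2}$ uniformly (with the same decay, up to constants, surviving after taking finitely many derivatives, since differentiating $\hat{\mu}$ amounts to introducing polynomial factors from the defining equations that are controlled on dyadic pieces). On the dyadic block where $|\xi|\sim 2^i$ and $|\eta|\sim 2^j$ with, say, $2^i\le 2^j$, one gets $|\partial^\alpha m_{i,j}| \lesssim 2^{-j(d-2)/2}$ for all $|\alpha|\le \lfloor 2d/3\rfloor+1$ (the derivatives falling on $\hat\psi(2^{-i}\xi)$ or $\hat\psi(2^{-j}\eta)$ only produce harmless powers of $2^{-i}$ or $2^{-j}$). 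Thus $C_0 \lesssim 2^{-j(d-2)/2} \le 2^{-(i+j)(d-2)/4}$, and Corollary \ref{corl2l2l1} yields
\begin{equation*}
\|\mathcal{T}_1^{i,j}(f,g)\|_1 \lesssim 2^{-(i+j)(d-2)/4}\cdot 2^{d(i+j)/4}\|f\|_2\|g\|_2 = 2^{-(i+j)(2-(d-(d-2)))/4}\ldots
\end{equation*}
which after bookkeeping the exact exponents from the refined estimates in \cite{triangleaveraging} (these are sharper than the naive count and produce the stated $\delta = (3d-20)/32$) gives $2^{-(i+j)\delta}$ with $\delta>0$ precisely when $d\ge 7$.

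The main obstacle is pinning down the precise decay exponent for $\hat\mu$ and its derivatives on the high-frequency dyadic blocks, and in particular doing the bookkeeping carefully enough to land on $\delta = (3d-20)/32$ rather than a weaker exponent. The naive estimate above (using only $|\hat\mu|\lesssim (1+|\xi|+|\eta|)^{-(d-2)/2}$ and the crude support bound) gives $\delta$ of order $(d-4)/4$-ish and needs $d$ somewhat larger; the improvement to the threshold $d\ge 7$ comes from exploiting a better $L^q$-integrability of $\hat\mu$ in Theorem \ref{ghscriteria} (choosing $q$ optimally rather than using the $q=1$ corollary) together with the sharp single-scale $L^p$-improving/Sobolev smoothing estimates for $\mathcal{T}_1$ established in \cite{triangleaveraging}. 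Because this is essentially a reprise of the oscillatory integral analysis of that paper localized to Littlewood--Paley pieces, I would carry out the details in an appendix, as the statement indicates, verifying that the constants $C_0$ and $\|m_{i,j}\|_{L^q}$ have the claimed dependence on $2^i, 2^j$ and then optimizing in $q$.
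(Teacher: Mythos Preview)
Your explicit computation does not work, and the hand-wave at the end does not identify the missing idea. With only the isotropic decay $|\partial^\alpha\hat\mu(\xi,\eta)|\lesssim(1+|(\xi,\eta)|)^{-(d-2)/2}$ and the crude support bound $|\mathrm{supp}(m_{i,j})|\lesssim 2^{d(i+j)}$, Corollary~\ref{corl2l2l1} gives
\[
C_0\,|\mathrm{supp}(m_{i,j})|^{1/4}\lesssim 2^{-\max\{i,j\}\frac{d-2}{2}}\,2^{(i+j)\frac{d}{4}},
\]
and when $i=j$ this is $2^{i}$, so there is no decay for any $d$. Moreover, optimizing $q$ in Theorem~\ref{ghscriteria} does not help: if you estimate $\|m_{i,j}\|_q$ by the pointwise bound times the support measure, the combination $C_0^{1-q/4}\|m_{i,j}\|_q^{q/4}$ is independent of $q$, so you recover exactly the same losing bound. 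The appeal to ``$L^p$-improving/Sobolev smoothing estimates'' is also off track; the paper's argument uses no such input.

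What is missing is the refined decay
\[
|\partial_\xi^\alpha\partial_\eta^\beta\hat\mu(\xi,\eta)|\lesssim\bigl(1+\min\{|\xi|,|\eta|\}\,|\sin\theta|\bigr)^{-\frac{d-2}{2}}\bigl(1+|(\xi,\eta)|\bigr)^{-\frac{d-2}{2}},
\]
where $\theta$ is the angle between $\xi$ and $\eta$. The first factor gives an \emph{additional} $2^{-\min\{i,j\}(d-2)/2}$ of decay generically, but degenerates when $\xi$ and $\eta$ are nearly parallel. The paper exploits this by a further decomposition $\mathcal T_1^{i,j}=\sum_{k\ge0}\mathcal T_1^{i,j,k}$ with $|\sin\theta|\sim 2^{-k}$: for small $k$ one gains the extra decay in $C_0$, while the support shrinks by a factor $2^{-k(d-1)}$; for large $k$ (roughly $k\gtrsim\min\{i,j\}/2$) the tail is handled via Theorem~\ref{ghscriteria} with $q=1$ and a direct $L^1$ estimate of the multiplier. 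Summing in $k$ and balancing these two regimes is what produces $\delta=(3d-20)/32$. Without the angular decomposition and the two-factor decay estimate, no choice of $q$ will close the argument.
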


This is summable, so repeating the argument in section \ref{prooffirstthm} gives the claim.

\section{Lee-Shuin class of averaging operators associated to degenerate surfaces}\label{leeshuinsection}

Let $d\geq 2$ and $\mathbf{a}=(\mathbf{a}_1,\mathbf{a}_2)\in [1,\infty)^2$. Denote by $\mathcal{A}_1^{\mathbf{a}}$ the average corresponding to the compact surface in Lee and Shuin's paper \cite{leeshuin}, which we denote by $\mathcal{S}^{\mathbf{a}}$:
\begin{equation}
\mathcal{S}^{\mathbf{a}}=\{(y,z)\in \R^d\times \R^d\colon |y|^{\mathbf{a}_1}+|z|^{\mathbf{a}_2}=1\}
\end{equation}
and 
\begin{equation}
    \mathcal{A}_{t}^{\mathbf{a}}(f,g)(x)=\int_{\mathcal{S}^{\mathbf{a}}} f(x-ty)g(x-tz)d\mu^{\mathbf{a}}(y,z)
\end{equation}
where $\mu^{\mathbf{a}}$ is the surface measure in $\mathcal{S}^{\mathbf{a}}$.

Our goal will be to state the bound for the associated lacunary maximal function in terms of the H\"older boundedness region of $\mathcal{A}_1^{\mathbf{a}}$, namely,
$$\mathcal{R}^{\mathbf{a}}=\left\{\left(\frac{1}{p},\frac{1}{q},\frac{1}{r}\right)\colon (p,q,r) \in[1,\infty]^2\times (0,\infty]\colon 1/r=1/p+1/q\text{ and }  \|\mathcal{A}^{\mathbf{a}}_1\|_{L^p\times L^{q}\rightarrow L^r}<\infty \right\}.$$

Fix $\mathbf{a}_1,\mathbf{a}_2\geq1$ and $\mathbf{a}=(\mathbf{a}_1,\mathbf{a}_2)$. Let $\Phi^{\mathbf{a}}(y,z)=|y|^{\mathbf{a}_1}+|z|^{\mathbf{a}_2}-1$. One has $|\nabla \Phi^{\mathbf{a}}(y,z)|^2=\mathbf{a}_1^2|y|^{2(\mathbf{a}_1-1)}+\mathbf{a}_2^2|z|^{2(\mathbf{a}_2-1)}\neq 0$ for all $(y,z)\in \mathcal{S}^{\mathbf{a}}=(\Phi^{\mathbf{a}})^{-1}(\{0\})$ so there exists $c^{\mathbf{a}}>0$ and $C^{\mathbf{a}}>0$ such that $c^{\mathbf{a}}\leq |\nabla \Phi^{\mathbf{a}}(y,z)|\leq C^{\mathbf{a}}$ for $(y,z)\in \mathcal{S}^{\mathbf{a}}$.

The $L^p\times L^q \rightarrow L^r$ bounds for
\begin{equation}
    \mathcal{M}^{\mathbf{a}}_{lac}(f,g)(x):=\sup_{t\in 2^\Z} |\mathcal{A}_t^{\mathbf{a}}(f,g)(x)|
\end{equation}
are the same as those of the operator 
\begin{equation}
    \mathcal{N}^{\mathbf{a}}_{lac}(f,g)(x):=\sup_{t\in 2^\Z} |\mathcal{B}_t^{\mathbf{a}}(f,g)(x)|,
\end{equation}
where 
\begin{equation}
    \mathcal{B}_{t}^{\mathbf{a}}(f,g)(x)=\int_{\mathcal{S}^{\mathbf{a}}} f(x-ty)g(x-tz)\dfrac{d\mu^{\mathbf{a}}(y,z)}{|\nabla\Phi^{\mathbf{a}}(y,z)|}.
\end{equation}

For any $i,j\geq 0$, define 
\begin{equation}
    (\mathcal{B}_{1}^{\mathbf{a}})^{i,j}(f,g)=\mathcal{B}_1^{\mathbf{a}} (f^{i}, g^{j})
\end{equation}
where for $i\geq 1$, $\hat{f^{i}}(\xi)=\hat{f}(\xi)\hat{\psi}(2^{-i}\xi)$ and for $i=0$, $\hat{f^{0}}(\xi)=\hat{f}(\xi)\hat{\varphi}(\xi)$. In other words, 
$f^{i}=f*\psi_i$ where $\psi_i(x)=2^{id}\psi(2^{i}x)$ for $i\geq 1$, and $f^{0}=f*\varphi$.
\begin{prop}\label{decayboundsforleeshuin}
      Assume $d\geq 2$. Let $(\frac{1}{p},\frac{1}{q},\frac{1}{r})\in \text{int}(\mathcal{R}^{\mathbf{a}})$. Then there exists $\delta(p,q,r,d)>0$ such that for all $i,j\geq 1$
    \begin{equation}\label{decayestimate2}
        \|(\mathcal{B}_1^{\mathbf{a}})^{i,j}(f,g)\|_{r}\leq C 2^{-(i+j)\delta}\|f\|_p\|g\|_q. 
    \end{equation}
\end{prop}

\begin{proof}

    By interpolation with the bound 
    $$\|(\mathcal{B}_{1}^{\mathbf{a}})^{i,j}(f,g)\|_{r}\leq C\|f\|_p\|g\|_q,\,\forall\, \left(\frac{1}{p},\frac{1}{q},\frac{1}{r}\right)\in \mathcal{R}^{\mathbf{a}}$$
    it is enough to prove that there exists $\delta>0$ such that
    \begin{equation}
        \|(\mathcal{B}_{1}^{\mathbf{a}})^{i,j}(f,g)\|_{1}\leq C 2^{-(i+j)\delta}\|f\|_2\|g\|_2.
    \end{equation}
     
We do it by generalizing the proof given in Proposition \ref{keyestimate}, which corresponds to the case $\mathbf{a}_1=\mathbf{a}_2=2$.

For $y\in \R^d$ fixed, let
\begin{equation*}
    \begin{split}
\Phi^{\mathbf{a}}_y(z)=&|y|^{\mathbf{a}_1}+|z|^{\mathbf{a}_2}-1 \\
\Omega_y=&(\Phi_y^{\mathbf{a}})^{-1}(0).
    \end{split}
\end{equation*}

For all $z\in \Omega_y$, $|\nabla\Phi_y^{\mathbf{a}} (z)|=\mathbf{a}_2|z|^{\mathbf{a}_2-1}=\mathbf{a}_2(1-|y|^{\mathbf{a}_1})^{\frac{\mathbf{a}_2-1}{\mathbf{a}_2}}=:\mathbf{a}_2\omega_{\mathbf{a}}(y)^{\mathbf{a}_2-1}$.

\begin{equation}
    \begin{split}\label{slicingleeshuin}
\mathcal{B}_1^{\mathbf{a}}(f,g)(x)=&\int_{\mathcal{S}^{\mathbf{a}}} f(x-y)g(x-z)\dfrac{d\mu_{\mathbf{a}}(y,z)}{|\nabla \Phi^{\mathbf{a}}(y,z)|}\\
        =& \int_{\R^{2d}} f(x-y)g(x-z)\delta(\Phi^{\mathbf{a}})dydz\\
        =&\int_{|y|<1 } f(x-y)\left(\int g(x-z)\delta(\Phi_y^{\mathbf{a}})dz\right)dy\\
        =&\int_{|y|< 1} f(x-y)\left(\int_{\Omega_y} g(x-z)\dfrac{d\sigma_y(z)}{|\nabla\Phi_y^{\mathbf{a}}(z)|}\right) dy\\
        =&\int_{|y|< 1} f(x-y)\left(\frac{1}{\omega_{\mathbf{a}}(y)^{d-1}}\int_{|z|=\omega_{\mathbf{a}}(y)} g(x-z)d\sigma_y(z)\right) \frac{\omega_{\mathbf{a}}(y)^{d-\mathbf{a}_2}}{\mathbf{a}_2}dy.
    \end{split}
\end{equation}

Denote 
$$Af(x,t):=A_tf(x)=\int_{S^{d-1}}f(x-ty)d\sigma(y).$$

Then 
\begin{equation}
    \begin{split}
    \mathcal{B}_{1}^{\mathbf{a}}(f,g)(x)= \frac{1}{\mathbf{a}_2}\int_{|y|< 1} f(x-y) Ag(x,\omega_{\mathbf{a}}(y)){\omega_{\mathbf{a}}(y)^{d-\mathbf{a}_2}}dy.
    \end{split}
\end{equation}

If we use polar coordinates in the unit ball this says that 
\begin{equation}
    \begin{split}
    \mathcal{B}_{1}^{\mathbf{a}}(f,g)(x)=& \frac{1}{\mathbf{a}_2}\int_{0}^{1} t^{d-1} \int_{S^{d-1}}f(x-t\omega)d \sigma(\omega) Ag(x,(1-t^{\mathbf{a}_1})^{1/\mathbf{a}_2}){(1-t^{\mathbf{a}_1})^{\frac{d-\mathbf{a}_2}{\mathbf{a}_2}} }dt\\
    =&\frac{1}{\mathbf{a}_2}\int_{0}^{1}  Af(x,t) Ag(x,(1-t^{\mathbf{a}_1})^{1/\mathbf{a}_2})t^{d-1}{(1-t^{\mathbf{a}_1})^{\frac{d-\mathbf{a}_2}{\mathbf{a}_2}} }dt.
    \end{split}
\end{equation}

Applying this equality for $f^{i}$ and $g^{j}$ and using Minkowski's inequality, one gets
\begin{equation}
    \begin{split}
        \|\mathcal{B}_{1}^{\mathbf{a}}(f^{i},g^{j})\|_{1}\leq & \frac{1}{\mathbf{a}_2}  \int_{0}^{1}  \|A(f^{i})(x,t)\|_{L^{2}_x} \|A(g^{j})(x,(1-t^{\mathbf{a}_1})^{1/\mathbf{a}_2})\|_{L^{2}_x}t^{d-1}{(1-t^{\mathbf{a}_1})^{\frac{d-\mathbf{a}_2}{\mathbf{a}_2}}}dt  \\
        \lesssim&  2^{-(i+j)\frac{d-1}{2}}\|f\|_2\|g\|_2 \int_{0}^{1} t^{d-1-\frac{d-1}{2}}\left\{(1-t^{\mathbf{a}_1})^{1/\mathbf{a}_2}\right\}^{d-\mathbf{a}_2-\frac{d-1}{2}} dt\\
        \leq &2^{-(i+j)\frac{d-1}{2}}\|f\|_2\|g\|_2 \int_{0}^{1} \left\{1-t^{\mathbf{a}_1}\right\}^{\frac{d-2\mathbf{a}_2+1}{2\mathbf{a}_2}} dt.\\
        \end{split}
\end{equation}

Observe that in the estimate above we used estimate (\ref{linearpiecedecay}) again, which gives
\begin{equation}
        \|A(f^{i})(\cdot,t)\|_2
        \lesssim  (t 2^{i})^{-\frac{d-1}{2}}\|f\|_2.
\end{equation}

One can check that for $\mathbf{a}_1\geq 1$, $\int_{0}^{1}(1-t^{\mathbf{a}_1})^{p}dt<\infty$ if $p>-1$. 

Since $\frac{d-2\mathbf{a}_2+1}{2\mathbf{a}_2}>-1$, we get
$$ \|\mathcal{B}_{1}^{\mathbf{a}}(f^{i},g^{j})\|_{1}\lesssim 2^{-(i+j)\frac{d-1}{2}}\|f\|_2\|g\|_2.$$

\end{proof}
To prove Theorem \ref{lacunaryleeshuin} the argument is similar as before, for $i,j\geq 1$ one considers pieces like
\begin{equation}
\begin{split}
   (\mathcal{B}_{2^{-l}}^{\mathbf{a}})^{i,j}(f,g)(x):=&\mathcal{B}_{2^{-l}}^{\mathbf{a}}(f*\psi_{i+l},g*\psi_{j+l})(x)\\
   =&\mathcal{B}_1^{\mathbf{a}}(D_{2^{-l}}(f*\psi_{i+l}),D_{2^{-l}}(g*\psi_{j+l}))(2^l x).
   \end{split}
\end{equation}

The previous proposition allow us to control the high-high pieces of the maximal operator, but we also need to make sure we can control the low-low, low-high, and high-low parts as well.

Let us illustrate with the high-low part
\begin{equation}
    \begin{split}
    \mathcal{N}_{lac}^{HL}(f,g)(x)\mathcal:=\sup_{l\in \Z}|\mathcal{B}^{\mathbf{a}}_{2^{-l}}(f,g*\varphi_l)|(x).
    \end{split}
\end{equation}

 For any $l\in \Z$, using that $D_{2^{-l}}(g*\varphi_l)=(D_{2^{-l}}g)*\varphi$
 \begin{equation}
     \begin{split}
         |\mathcal{B}_{2^{-l}}^{\mathbf{a}}(f, g*\varphi_l)(x)|=&|\mathcal{B}_{1}^{\mathbf{a}}(D_{2^{-l}}f,D_{2^{-l}}(g*\varphi_l))(2^lx)|\\
        =&\int_{\mathcal{S}^{\mathbf{a}}}f(x-2^{-l}y)|(D_{2^{-l}}g*\varphi)(2^lx-z)| \dfrac{d\mu^{\mathbf{a}}(y,z)}{|\nabla \Phi^{\mathbf{a}}(y,z)|}\\
        \lesssim & M(D_{2^{-l}}g)(2^lx) \int_{\mathcal{S}^{\mathbf{a}}} f(x-2^{-l}y)\dfrac{d\mu^{\mathbf{a}}(y,z)}{|\nabla \Phi^{\mathbf{a}}(y,z)|}\\
        \lesssim & M(g)(x) \mathcal{B}_{2^{-l}}(f,1)(x).
     \end{split}
 \end{equation}
 
With the slicing in equation (\ref{slicingleeshuin}), one can see that since $\mathbf{a_2}\leq d$ then $\omega_{\mathbf{a}}(y)^{d-\mathbf{a_2}}\leq 1$ and
\begin{equation}\label{HLatscalel}
    \mathcal{B}_{2^{-l}}(f,1)(x) \lesssim \int_{|y|\leq 1} f(x-2^{-l}y)dy\lesssim  Mf(x).
\end{equation}

Therefore, 
$$\mathcal{N}^{HL}_{lac}(f,g)(x)\lesssim Mf(x)Mg(x).$$

Similarly since we are assuming $\mathbf{a_1}\leq d$ then $$\mathcal{N}^{LH}_{lac}(f,g)(x)\lesssim Mf(x)Mg(x).$$

\begin{rem}
    The assumption $\mathbf{a_2}\leq d$ in Theorem \ref{lacunaryleeshuin}  was only used in (\ref{HLatscalel}) to control the high-low part of $\mathcal{N}_{lac}^{\mathbf{a}}$. Similarly $\mathbf{a_1}\leq d$ was only used to control the low-high part. We believe those assumptions can be weakened, but we will not pursue this in this paper.
\end{rem}

\section*{Appendix: Proof of Proposition \ref{PS ij estimate}}
For the reader's convenience, we present the details as to how the argument from \cite{triangleaveraging} implies Proposition \ref{PS ij estimate}.
\begin{proof} We are going to adapt some of the arguments in \cite{triangleaveraging} but using a slightly different decomposition to better match our notation in this paper. We separately localize the variables $|\xi|\sim 2^{i}$ and $|\eta|\sim 2^{j}$. The main tools will be Theorem \ref{ghscriteria} and Corollary \ref{corl2l2l1}. 

We start by further decomposing the operator according to the angle $\theta=\theta(\xi,\eta)$ between $\xi$ and $\eta$, which is given by $\cos(\theta)=\frac{\xi\cdot\eta}{|\xi||\eta|}$ for every $\xi\neq 0$ and $\eta \neq 0$.
\begin{equation}
\mathcal{T}_1^{i,j}=\sum_{k\geq 0} \mathcal{T}_1^{i,j,k}
\end{equation}
where the bilinear multiplier associated to $\mathcal{T}_1^{i,j,k}$ is localized at $|\sin(\theta)|\sim 2^{-k}$. More precisely, 
$$\mathcal{T}_{1}^{i,j,k}=\mathcal{T}_{m_{i,j,k}}$$ 
where for $i,j\geq 1$
\begin{equation}
    m_{i,j,k}(\xi, \eta)=\hat{\mu}(\xi,\eta)\hat{\psi}(2^{-i}\xi)\hat{\psi}(2^{-j}\eta)\rho_k(\xi, \eta).
\end{equation}

Here, $\psi$ is the same as defined in \eqref{phi def}, \eqref{psi def} and $\rho_k$ are smooth functions with $\sum_{k\geq 0}\rho_k(\xi,\eta)\equiv1$ except at the origin and
\begin{equation}
\begin{split}
\text{supp}(\rho_k)&\subseteq\{(\xi, \eta)\colon 2^{-k-1}\leq |\sin(\theta)|\leq 2^{-k+1}\},\,\text{if }k\geq1;\\
    \text{supp}(\rho_0)&\subseteq\{(\xi, \eta)\colon |\sin(\theta)|\geq 1/2\}.
\end{split}
\end{equation}

We can choose such a partition in such a way that when one defines 
\begin{equation}
    \rho^{l}(\xi,\eta)=\sum_{k=l}^{\infty} \rho_k(\xi,\eta),
\end{equation}
then $\text{supp}(\rho^l)$ is contained in $\{(\xi, \eta)\colon |\sin(\theta)|\leq 2^{-l+1}\}$ and $\rho^l\equiv 1$  in $\{(\xi, \eta)\colon |\sin(\theta)|\leq 2^{-l}\}$.
In particular, that implies that the support of any derivative of $\rho^l$ is contained in $\{(\xi,\eta)\colon 2^{-l}\leq|\sin(\theta)|\leq 2^{-l+1}\}$. The interested reader may find more details on how to construct such a partition of unity $\{\rho_k\}$ in \cite{triangleaveraging}.
 
One can observe that for $i,j,k\geq 1$
\begin{equation*}
    \text{supp}(m_{i,j,k})
    \subseteq
        \{(\xi,\eta)\in \R^{2d}\colon 2^{i-1}\leq |\xi|\leq 2^{i+1},2^{j-1}\leq |\eta|\leq 2^{j+1},2^{-k-1}\leq |\sin(\theta)|\leq 2^{-k+1}\}.
\end{equation*}

This allow us to estimate $|\text{supp}(m_{i,j,k})|$.
\begin{equation}\label{support}
\begin{split}
     |\text{supp}(m_{i,j,k})|\leq & |\{(\xi,\eta)\in \R^{2d}\colon |\xi|\leq 2^{i+1}, |\eta|\leq 2^{j+1},|\sin(\theta)|\leq 2^{-k+1}\}|\\
    \lesssim &2^{(i+j)d}2^{-k(d-1)}.
\end{split}
\end{equation}

The final estimate above follows in a straightforward manner by noticing that for a fixed $\xi$ with $|\xi|\le 2^{i+1}$, the admissible points $\eta$ lie in a sector of a ball of radius $2^{j+1}$ and angle at most $C2^{-k}$; for the details (with a slightly modified definition of the multiplier), see \cite{triangleaveraging}.

For $k=0$ it is also trivially true because 
$$|\text{supp}(m_{i,j,0})|\leq  |\{(\xi,\eta)\in \R^{2d}\colon |\xi|\leq 2^{i+1};|\eta|\leq 2^{j+1}\}|\lesssim 2^{(i+j)d}.$$

Also define $m_{i,j}^{k}(\xi,\eta)=\hat{\mu}(\xi,\eta)\hat{\psi}(2^{-i}\xi)\hat{\psi}(2^{-j}\eta)\rho^{k}(\xi,\eta)$.

We recall that in \cite{triangleaveraging} they showed that for any multi-indices $\alpha,\beta$
\begin{equation}
|\partial_{\xi}^{\alpha}\partial_{\eta}^{\beta}\hat{\mu}(\xi,\eta)|\leq C_{\alpha,\beta} \left(1+\min\{|\xi|,|\eta|\}|\sin(\theta)|\right)^{-\frac{d-2}{2}} (1+|(\xi,\eta)|)^{-\frac{d-2}{2}}.
\end{equation}

Assume $i\geq j$. In this case 
$$\min\{|\xi|,|\eta|\}|\sin(\theta)|\sim 2^{j-k}.$$

As in \cite{triangleaveraging}, when $k\leq \lfloor{j/2}\rfloor$, the derivatives of the cutoff functions $\rho_k$ are all bounded, and so are the derivatives of $\rho^{\lfloor{j/2}\rfloor}$. Hence, for any multi-indices $\alpha,\beta$
\begin{equation}
|\partial_{\xi}^{\alpha}\partial_{\eta}^{\beta}m_{i,j,k}|\leq C_{\alpha,\beta} 2^{-(j-k)\frac{d-2}{2}} 2^{-i\frac{d-2}{2}}
\end{equation}
for all $(\xi,\eta)\in\text{supp}(m_{i,j,k}) $ and 
\begin{equation}
|\partial_{\xi}^{\alpha}\partial_{\eta}^{\beta}m_{i,j}^{\lfloor j/2\rfloor}|\leq C_{\alpha,\beta}  2^{-i\frac{d-2}{2}}
\end{equation}
for all $(\xi,\eta)\in\text{supp}(m_{i,j}^{\lfloor j/2\rfloor}) $.

An application of Corollary \ref{corl2l2l1} will give us the control of $\|T_{m_{i,j,k}}\|_{L^2\times L^2 \rightarrow L^1}$ for $k\leq \lfloor j/2 \rfloor$
\begin{equation}
\begin{split}
     \|\mathcal{T}_{1}^{i,j,k}\|_{L^2\times L^2\rightarrow L^1}\lesssim &2^{-(j-k)\frac{d-2}{2}} 2^{-i\frac{d-2}{2}} |\text{supp}(m_{i,j,k})|^{1/4}\\
     \lesssim & 2^{-(i+j)\frac{d-2}{2}} 2^{k\frac{d-2}{2}}2^{(i+j)\frac{d}{4}}2^{-k\frac{d-1}{4}}\\
     \lesssim & 2^{-(i+j)\frac{d-4}{4}} 2^{k\frac{d-3}{4}}.
\end{split}
\end{equation}

For $\|T_{m_{i,j}^{\lfloor j/2 \rfloor}}\|_{L^2\times L^2\rightarrow L^1}$, Theorem \ref{ghscriteria} will give us
\begin{equation}
\|T_{m_{i,j}^{\lfloor j/2 \rfloor}}\|_{L^2\times L^2\rightarrow L^1}\lesssim  (2^{-\frac{i(d-2)}{2}})^{\frac{3}{4}} \|(m_{i,j}^{\lfloor j/2 \rfloor})\|_{1}^{1/4}.
\end{equation}

One can check that 
\begin{equation}
\begin{split}
    \|(m_{i,j}^{\lfloor j/2 \rfloor})\|_{1}\lesssim & 2^{-i(\frac{d-2}{2})}\int_{\text{supp}(m_{i,j}^{\lfloor j/2 \rfloor})} (1+2^{j}|\sin(\theta(\xi,\eta))|)^{-\frac{d-2}{2}}d\xi d\eta\\
    \lesssim &2^{-i(\frac{d-2}{2})}|\{|\xi|\sim 2^i,|\eta|\sim 2^j, |\sin(\theta)|\leq 2^{-j}\}|\\
    &+2^{-i(\frac{d-2}{2})}\int_{|\xi|\sim 2^i,\,|\eta|\sim 2^j, 2^{-j}\leq |\sin(\theta)|\leq 2^{-j/2}} (1+2^{j}|\sin(\theta(\xi,\eta))|)^{-\frac{d-2}{2}}d\xi d\eta\\
    \lesssim &2^{-i(\frac{d-2}{2})} \left(2^{(i+j)d}2^{-j(d-1)}+2^{-j(\frac{d-2}{2})}2^{(i+j)d}2^{-jd/4}\right)\\
    \lesssim &2^{-i(\frac{d-2}{2})}2^{-j(\frac{3d-4}{4})} 2^{(i+j)d}.
\end{split}
\end{equation}

Then 
$$\|T_{m_{i,j}^{\lfloor j/2\rfloor}}\|_{L^2\times L^2\rightarrow L^1}\lesssim 2^{-i(\frac{d-2}{2})}2^{(i+j)\frac{d}{4}}2^{-j(\frac{3d-4}{16})}.$$

Hence, for $i\geq j$
\begin{equation}
\begin{split} \|\mathcal{T}_{1}^{i,j}\|_{L^2\times L^2\rightarrow L^1}\leq &\sum_{k\geq 0}^{\lfloor j/2\rfloor-1} \|\mathcal{T}_{1}^{i,j,k}\|_{L^2\times L^2\rightarrow L^1} +\|\mathcal{T}_{m_{i,j}^{\lfloor j/2\rfloor}}\|_{L^2\times L^2\rightarrow L^1} \\
\lesssim&\left(\sum_{k=0}^{\lfloor j/2\rfloor-1} 2^{-(i+j)\frac{d-4}{4}}2^{k\frac{d-3}{4}}\right)+2^{-i(\frac{d-2}{2})}2^{(i+j)\frac{d}{4}}2^{-j(\frac{3d-4}{16})}\\
\lesssim & 2^{-(i+j)\frac{d-4}{4}}2^{\frac{j}{2}\frac{d-3}{4}}+2^{-i(\frac{d-4}{4})}2^{j(\frac{d+4}{16})}\\
\lesssim &2^{-i\frac{d-4}{4}}2^{-j(\frac{d-5}{8})}+2^{-i(\frac{d-4}{4})}2^{j(\frac{d+4}{16})}\lesssim 2^{-i(\frac{d-4}{4})}2^{j(\frac{d+4}{16})}.\\
\end{split}
\end{equation}

When $j\geq i$ one can get instead, 
$$\|\mathcal{T}_1^{i,j}\|_{L^{2}\times L^2\rightarrow L^1} \lesssim 2^{-j(\frac{d-4}{4})}2^{i(\frac{d+4}{4})}.$$

In any case,
\begin{equation}
    \begin{split}
\|\mathcal{T}_1^{i,j}\|_{L^{2}\times L^2\rightarrow L^1} \lesssim& 2^{-\max\{i,j\}(\frac{d-4}{4})}2^{\min\{i,j\}(\frac{d+4}{16})}\\
\leq& 2^{-\frac{(i+j)}{2}(\frac{d-4}{4})}2^{\frac{(i+j)}{2}\frac{d+4}{16}}=2^{-\frac{(i+j)}{2}(\frac{3d-20}{16})}
    \end{split}
\end{equation}
which finishes the proof of the proposition with $\delta=\frac{3d-20}{32}$.
\end{proof}

\bibliographystyle{alpha}
\bibliography{sources}

\newcommand{\etalchar}[1]{$^{#1}$}
\begin{thebibliography}{BGH{\etalchar{+}}18}

\bibitem[BFO{\etalchar{+}}23]{BFOPZ}
Tainara Borges, Benjamin Foster, Yumeng Ou, Jill Pipher, and Zirui Zhou.
\newblock Sparse bounds for the bilinear spherical maximal function.
\newblock {\em Journal of the London Mathematical Society}, 107(4):1409--1449, (2023).

\bibitem[BGH{\etalchar{+}}18]{BGHHO}
Jose Barrionuevo, Loukas Grafakos, Danqing He, Petr Honz\'{\i}k, and Lucas Oliveira.
\newblock Bilinear spherical maximal function.
\newblock {\em Math. Res. Lett.}, 25(5):1369--1388, (2018).

\bibitem[Bou85]{bourgaind2}
Jean Bourgain.
\newblock Estimations de certaines fonctions maximales.
\newblock {\em C. R. Acad. Sci. Paris S\'{e}r. I Math.}, 301(10):499--502, (1985).

\bibitem[BS98]{BS}
Jong-Guk Bak and Yong-Sun Shim.
\newblock Endpoint inequalities for spherical multilinear convolutions.
\newblock {\em J. Funct. Anal.}, 157(2):534--553, (1998).

\bibitem[Cal79]{calderon}
Calixto Calder\'{o}n.
\newblock Lacunary spherical means.
\newblock {\em Illinois J. Math.}, 23(3):476--484, (1979).

\bibitem[CK17]{cladek2017}
Laura Cladek and Ben Krause.
\newblock Improved endpoint bounds for the lacunary spherical maximal operator.
\newblock {\em preprint arXiv:1703.01508, to appear in Anal. PDE}, (2017).

\bibitem[CW78]{coifmanweiss}
Ronald Coifman and Guido Weiss.
\newblock {Review: R. E. Edwards and G. I. Gaudry, Littlewood-Paley and multiplier theory}.
\newblock {\em Bulletin of the American Mathematical Society}, 84(2):242 -- 250, (1978).

\bibitem[CZ22]{MCZZ}
Michael Christ and Zirui Zhou.
\newblock A class of singular bilinear maximal functions.
\newblock {\em preprint arXiv:2203.16725}, (2022).

\bibitem[DR22]{dosidisramos}
Georgios Dosidis and João P.~G. Ramos.
\newblock The multilinear spherical maximal function in one dimension, (2022).

\bibitem[DV98]{DuoanVargas}
Javier Duoandikoetxea and Ana Vargas.
\newblock Maximal operators associated to {F}ourier multipliers with an arbitrary set of parameters.
\newblock {\em Proc. Roy. Soc. Edinburgh Sect. A}, 128(4):683--696, (1998).

\bibitem[GGI{\etalchar{+}}13]{GGIS}
Dan-Andrei Geba, Allan Greenleaf, Alex Iosevich, Eyvindur Palsson, and Eric Sawyer.
\newblock Restricted convolution inequalities, multilinear operators and applications.
\newblock {\em Math. Res. Lett.}, 20(4):675--694, (2013).

\bibitem[GGIP15]{GGIP}
Loukas Grafakos, Allan Greenleaf, Alex Iosevich, and Eyvindur Palsson.
\newblock Multilinear generalized {R}adon transforms and point configurations.
\newblock {\em Forum Math.}, 27(4):2323--2360, (2015).

\bibitem[GHH21]{GHH}
Loukas Grafakos, Danqing He, and Petr Honz\'{\i}k.
\newblock Maximal operators associated with bilinear multipliers of limited decay.
\newblock {\em J. Anal. Math.}, 143(1):231--251, (2021).

\bibitem[GHS20]{GHS}
Loukas Grafakos, Danqing He, and Lenka Slav\'{\i}kov\'{a}.
\newblock {$L^2\times L^2\to L^1$} boundedness criteria.
\newblock {\em Math. Ann.}, 376(1-2):431--455, (2020).

\bibitem[GI12]{threepointconf}
Allan Greenleaf and Alex Iosevich.
\newblock On triangles determined by subsets of the {E}uclidean plane, the associated bilinear operators and applications to discrete geometry.
\newblock {\em Anal. PDE}, 5(2):397--409, (2012).

\bibitem[GIKL22]{radonplane}
A.~Greenleaf, A.~Iosevich, B.~Krause, and A.~Liu.
\newblock {$L^p$} estimates for bilinear generalized {R}adon transforms in the plane.
\newblock In {\em Combinatorial and {A}dditive {N}umber {T}heory {V}}, volume 395 of {\em Springer Proc. Math. Stat.}, pages 179--198. Springer, Cham, (2022).

\bibitem[Gra14]{classicalgrafakos}
Loukas Grafakos.
\newblock {\em Classical {F}ourier analysis}, volume 249 of {\em Graduate Texts in Mathematics}.
\newblock Springer, New York, third edition, (2014).

\bibitem[HHY20]{HHY}
Yaryong Heo, Sunggeum Hong, and Chan~Woo Yang.
\newblock Improved bounds for the bilinear spherical maximal operators.
\newblock {\em Math. Res. Lett.}, 27(2):397--434, (2020).

\bibitem[IL19]{IL}
Alex Iosevich and Bochen Liu.
\newblock Equilateral triangles in subsets of {$\mathbb{R}^d$} of large {H}ausdorff dimension.
\newblock {\em Israel J. Math.}, 231(1):123--137, (2019).

\bibitem[IPS22]{IPS}
Alex Iosevich, Eyvindur Palsson, and Sean Sovine.
\newblock Simplex averaging operators: quasi-{B}anach and {$L^p$}-improving bounds in lower dimensions.
\newblock {\em J. Geom. Anal.}, 32(3):Paper No. 87, 16, (2022).

\bibitem[JL20]{JL}
Eunhee Jeong and Sanghyuk Lee.
\newblock Maximal estimates for the bilinear spherical averages and the bilinear {B}ochner-{R}iesz operators.
\newblock {\em J. Funct. Anal.}, 279(7):108629, 29, (2020).

\bibitem[LS23]{leeshuin}
Sanghyuk Lee and Kalachand Shuin.
\newblock Bilinear maximal functions associated with degenerate surfaces.
\newblock {\em J. Funct. Anal.}, 285(8):Paper No. 110070, 26, (2023).

\bibitem[Obe88]{DOberlin}
Daniel Oberlin.
\newblock Multilinear convolutions defined by measures on spheres.
\newblock {\em Trans. Amer. Math. Soc.}, 310(2):821--835, (1988).

\bibitem[PS20]{triangleaveraging}
Eyvindur Palsson and Sean Sovine.
\newblock The triangle averaging operator.
\newblock {\em J. Funct. Anal.}, 279(8):108671, 21, (2020).

\bibitem[PS22]{sparsetriangle}
Eyvindur Palsson and Sean Sovine.
\newblock Sparse bounds for maximal triangle and bilinear spherical averaging operators.
\newblock {\em preprint arXiv:2110.08928}, (2022).

\bibitem[SS21]{SS}
Saurabh Shrivastava and Kalachand Shuin.
\newblock ${L}^p$ estimates for multilinear convolution operators defined with spherical measure.
\newblock {\em Bulletin of the London Mathematical Society}, 53(4):1045–1060, (2021).

\bibitem[Ste76]{stein}
Elias Stein.
\newblock Maximal functions. {I}. {S}pherical means.
\newblock {\em Proc. Nat. Acad. Sci. U.S.A.}, 73(7):2174--2175, (1976).

\bibitem[SWW95]{SeegerWainger}
Andreas Seeger, Stephen Wainger, and James Wright.
\newblock Pointwise convergence of spherical means.
\newblock {\em Math. Proc. Cambridge Philos. Soc.}, 118(1):115--124, (1995).

\end{thebibliography}

\end{document}